\newtheorem{theorem}{Theorem}[subsection]
\newtheorem{lemma}[theorem]{Lemma}
\newtheorem{definition}{Definition}[subsection]
\title{Port-Hamiltonian Discontinuous Galerkin Finite Element Methods}
\author{N. Kumar\footnotemark[1],\quad J.J.W. van der Vegt\footnotemark[1],\quad H.J. Zwart\footnotemark[1]\ \footnotemark[2] \\ \footnotemark[1] University of Twente, The Netherlands  \\ \footnotemark[2] Eindhoven University of Technology, The Netherlands}
\begin{document}
\maketitle
\begin{abstract}
A port-Hamiltonian (pH) system formulation is a geometrical notion used to formulate conservation laws for various physical systems. The distributed parameter port-Hamiltonian formulation models infinite dimensional Hamiltonian dynamical systems that have a non-zero energy flow through the boundaries. In this paper we propose a novel framework for discontinuous Galerkin (DG) discretizations of pH-systems. Linking DG methods with pH-systems gives rise to compatible structure preserving finite element discretizations along with flexibility in terms of geometry and function spaces of the variables involved. Moreover, the port-Hamiltonian formulation makes boundary ports explicit, which makes the choice of structure and power preserving numerical fluxes easier. We state the Discontinuous Finite Element  Stokes-Dirac structure with a power preserving coupling between elements, which provides the mathematical framework for a large class of pH discontinuous Galerkin discretizations. We also provide an a priori error analysis for the port-Hamiltonian discontinuous Galerkin Finite Element Method (pH-DGFEM). The port-Hamiltonian discontinuous Galerkin finite element method is demonstrated for the scalar wave equation showing optimal rates of convergence.
\end{abstract}
\section{Introduction}
This paper defines a novel elementwise discontinuous finite element Stokes-Dirac structure, which provides the mathematical framework for elementwise discontinuous Galerkin (DG) discretizations of port-Hamiltonian (pH) systems. The pH-system formulation,  \cite{ jacob2012linear, 10.1093/imamci/dnaa018,schoberl2011first, van2002hamiltonian}, is an approach for  modeling and control of various kinds of physical systems. Infinite dimensional pH-systems with non-zero energy flow through it (spatial) boundaries are called distributed parameter pH-systems, \cite{jacob2012linear, van2002hamiltonian, van2014port}. These distributed parameter pH-systems comprise of an interconnection structure called Stokes-Dirac structure and an energy functional called Hamiltonian. The Stokes-Dirac structure gives us an effective way to express the behaviour of a system through pairs of input-output variables (whose product gives the power) at the boundary. This structural property of pH-systems makes them very suitable for the modeling and analysis of interconnected multi-physics systems. Due to the varied complexities of these multi-physics systems (inter-domain coupling, non-linearities, etc.), the interest in numerical discretization techniques for pH-systems is increasing. In particular, numerical techniques that can preserve the mathematical structure (power balance) of the system upon discretization are important.\\
 Various approaches have been followed to obtain structure preserving discretizations of port-Hamiltonian systems. Finite difference and model order reduction methods have been proposed by Lopezlena et al., \cite{lopezlena2003energy}, and Trenchant et al., \cite{trenchant2018finite}. Golo et al. \cite{golo2004hamiltonian, talasila2002wave}, proposed a method which is effective in preserving the structure of a pH-system on discretization, but this method has restrictive compatibility conditions and becomes complicated for higher dimensions.\\
Based on Discrete Exterior Calculus, \cite{hirani2003discrete}, Seslija et al. \cite{seslija2012discrete}, proposed a discrete formulation of pH conservation laws, along with a consistent approximation of the closure equations, which results in a compatible discretization. In \cite{cardoso2019partitioned}, Cardoso-Ribeiro et al., have proposed the partitioned Finite Element Method (pFEM) for pH-systems with 2 conservation laws, e.g., shallow water equations, acoustic wave equations. The pFEM for structure preserving discretizations of pH-systems solves one conservation law in weak form and the other one in strong form. Serhani et al., \cite{serhani2019partitioned}, have extended this pFEM to damped pH-systems. Kotyczka et al., \cite{ kotyczka2019numerical, kotyczka2018weak} proposed a mixed finite element discretization based on a general pH weak formulation. In this approach, the weak form of the Dirac structure on each element is defined as the basis for a mixed-Galerkin pH formulation. Using a power-preserving interconnection, a structure preserving discretization of the entire system is obtained.\\
 The consistent approximation of the closure equation in the Discrete Exterior Calculus approach and the weak form in the partitioned-FEM and mixed-FEM methods provide more flexibility in terms of consistency and stability. However, these methods are restricted in terms of mesh geometry and the flexibility in the choice of function spaces chosen for the variables in the conservation laws. In this paper, using the concept of an elementwise Dirac structure and power preserving interconnection, we move a step forward by proposing a discontinuous Galerkin (DG) method for structure preserving discretizations of pH-systems.\\ 
The flexibility in terms of mesh geometry (e.g., hanging nodes) and function spaces allowing hp-adaptive methods, which DG methods \cite{di2011mathematical,  ern2004theory, hesthaven2007nodal, riviere2008discontinuous} provide, makes them a rich environment for the discretization of various physical systems. DG methods use the basic idea of solving the PDE elementwise and then through appropriate numerical fluxes at the shared boundaries of the elements a stable discretization is ensured for the whole system. However, most DG methods are unable to preserve the mathematical or Hamiltonian structure of the PDEs (if the PDEs possesses one). Hence, linking DG methods with the concept of pH-systems has a two fold advantage. On one hand the pH formulation makes the boundary port-variables explicit, so choosing a numerical flux that preserves the mathematical structure for DG discretizations becomes easier. On the other hand DG discretizations provide a flexibile (in terms of mesh geometry and parallel computing) structure preserving discretization of pH-systems.\\
Since the power preserving interconnection of two Dirac structures again gives a Dirac structure,  \cite{cervera2007interconnection,  jacob2012linear}, the elementwise DG formulation is extended to the whole manifold using a power preserving interconnection structure. We also provide an a priori error analysis of the port-Hamiltonian DG discretization.\\
The paper is organized as follows. In order to define the mathematical framework and notation we start with a brief introduction to differential forms in Section \ref{sec:form}. In Section \ref{sec:conPH} we discuss the main aspects of distributed parameter pH-systems, including the definition of the Stokes-Dirac structure and the Hamiltonian, and then relate the port variables in the Stokes-Dirac structure to the Hamiltonian of the system. With this background we discuss in Section \ref{sec:DGFEMPH} the key results of this paper. We first define the function spaces for the port-variables of the Dirac structure. Next, we state the duality relations on the elements and their boundaries. Finally, at the end of this section, we state the definition of a generalized Stokes-Dirac structure (discontinuous finite element Stokes-Dirac structure) along with a definition of the interconnection Dirac structure (structure representing the connection between discontinuous finite elements). We state the power preserving coupling between finite elements  in Section \ref{sec:choiceinter}. In Section \ref{sec:DGPHFEM} we present the DG formulation of the discontinuous finite element Stokes-Dirac structure, together with the chosen numerical fluxes. In Section \ref{sec:Energy stability} we analyse the stability and in Section \ref{sec:error} we give an a priori error analysis of the DG discretization suitable for a large class of pH-systems. To support the theory we show some numerical results for the port-Hamiltonian discontinuous Galerkin finite element discretization of the scalar wave equation in Section \ref{sec:result}.
\section{Summary of some properties of differential forms}\label {sec:form}
In order to express the geometrical structure of port-Hamiltonian systems we will use the language of differential forms. To make the paper self contained we will summarize the main properties of differential forms and their function spaces used in this paper. For more details, we refer to \cite{  abraham2012manifolds,arnol2013mathematical,flanders1963differential,frankel2011geometry}.
\subsection{Smooth differential forms}\label {sec:smoothform}
Let $\Omega$ be an open, bounded, connected and oriented $n$-dimensional manifold in $\mathbb{R}^n$ with Lipschitz boundary $\partial\Omega$ and let $\Lambda^k(\Omega)$ be the space of smooth $k$-forms on $\Omega$. The space of $k$-forms $\Lambda^k(\Omega)$ and the space of $(n-k)$-forms $\Lambda^{n-k}(\Omega)$ have equal dimension and therefore are isomorphic. There exists a duality product between a $k$-form $\lambda$ and an $(n-k)$-form $\mu$, 
  \begin{equation}\label{eq:pair}
  \begin{aligned}
  \langle \lambda \vert \mu \rangle _\Omega = \int_\Omega \lambda \wedge \mu,
  \end{aligned}
  \end{equation}
  where $\wedge$ is the (usual) wedge product of differential forms.\\
  The continuous extension of a smooth $k$-form $\omega \in \Lambda^{k}(\Omega)$ to the boundary $\partial \Omega$ is done through the trace operator $\mathrm{tr}(\omega) \in \Lambda^{k}(\partial\Omega)$. For $\omega \in \Lambda^{n-1}(\Omega)$, $\mathrm{tr}(\omega) \in \Lambda^{n-1}(\partial \Omega)$, the generalized Stokes theorem is given by 
 \begin{equation}
  \begin{aligned}
  \int_\Omega d\omega &= \int_{\partial\Omega} \mathrm{tr}(\omega)  \label{eq:prop1c},
  \end{aligned}
  \end{equation}
  where $d$ is the exterior derivative operator for differential forms.\\
  Using the duality product \eqref{eq:pair}, the generalized Stokes theorem \eqref{eq:prop1c} and Leibniz rule \cite{abraham2012manifolds, frankel2011geometry, marsden2013introduction}, we state the integration by parts rule for smooth differential forms $\lambda \in \Lambda^k(\Omega)$ and $\mu \in \Lambda^{n-k-1}(\Omega)$,
      \begin{equation}\label{eq:intp}
  \begin{aligned}
  \langle d\lambda \vert \mu \rangle _\Omega =  \langle  \lambda \vert \mu \rangle _{\partial \Omega} + (-1)^{k-1} \langle \lambda \vert d\mu \rangle _\Omega,
    \end{aligned}
  \end{equation}
  where with an abuse of notation we write, $\langle\lambda\mid\mu\rangle_{\partial \Omega}  = \int_{\partial \Omega} \mathrm{tr} (\lambda) \wedge \mathrm{tr} (\mu) $, \cite{arnold2006finite}. 
 \subsection{Hodge Duality}\label{sec:conhodge1}
The inner product $g$ between two $k$-forms $\alpha^k, \beta^k$  is written as 
  \begin{equation}\label{eq:intro1}
 \begin{aligned}
g(\alpha^k,\beta^k) = \int_\Omega  \alpha^k(\mu) \beta^k(\mu) d\mu, 
 \end{aligned}
 \end{equation}
 where $d\mu$ is the Lebesque measure on $\Omega$ and $\alpha^k(\mu), \beta^k(\mu)$ are the coefficient functions of $\alpha^k,\beta^k$.\\
With this inner product we can construct an isomorphism between the spaces $\Lambda^k(\Omega)$ and $\Lambda^{n-k}(\Omega)$, which is denoted as $\ast : \Lambda^k \rightarrow \Lambda^{n-k}$ and called the Hodge star operator, \cite{abraham2012manifolds, flanders1963differential, frankel2011geometry}.
\begin{definition}\label{def:Hodgestar}
 Let $\Omega$ be an oriented manifold with $\dim (\Omega) = n$ and let $\Lambda^k(\Omega)$ be the space of differential $k$-forms. Let the inner product on the space of $k$ forms be given by $g : \Lambda^{k}(\Omega) \times \Lambda^{k}(\Omega) \rightarrow \mathbb{R}$. Then the Hodge star operator $\ast : \Lambda^k(\Omega) \rightarrow \Lambda^{n-k}(\Omega)$ is the linear mapping defined through 
 \begin{equation}\label{eq:Hodgestar}
 \begin{aligned}
 \int_\Omega\lambda \wedge \ast \omega = g(\omega, \lambda), \quad \quad \lambda, \ \omega \in \Lambda^{k}(\Omega).
 \end{aligned}
 \end{equation}
 \end{definition}
  If the Hodge star operator is applied twice on a $k$-form $\alpha^k \in \Lambda^k(\Omega)$ (Section 14.1, \cite{frankel2011geometry}), we have 
 \begin{equation}\label{eq:twice}
 \begin{aligned}
 \ast\ast\alpha^k = (-1)^{k(n-k)}\alpha^k.
 \end{aligned}
 \end{equation}
\subsection{Function spaces for differential forms}\label{sec:conspace}
We will now extend the above properties of differential forms to weaker smoothness conditions. For more information we refer to Section 4 of \cite{arnold2006finite} or Section 2 of \cite{arnold200finite}. The function spaces $H^s \Lambda^k(\Omega)$, with $s \in \mathbb{R}$, and $L^p \Lambda^k(\Omega)$ contain differential $k$-forms whose coefficient functions belong to the standard Sobolev space $H^s(\Omega)$ and the Lebesgue space $L^p(\Omega)$, respectively.\\
The weak exterior derivative for $\lambda \in L^2\Lambda^k(\Omega)$, is defined via the integration by parts formula \eqref{eq:intp}. Namely, for infinitely differentiable differential forms with compact support $\mu \in C^{\infty}_c\Lambda^{n-k-1}(\Omega),\ k \le n-1$, $d\lambda$ is defined to satisfy
  \begin{equation}\label{eq:sob1}
  \begin{aligned}
 \langle d\lambda \vert \mu \rangle_{\Omega} := - (-1)^k  \langle \lambda \vert d\mu \rangle_\Omega,
   \end{aligned}
  \end{equation} 
where we use the symbol $d$ for the weak exterior derivative as well. \\ 
This weak exterior derivative operator allows us to apply the exterior derivative to differential forms whose coefficients are not differentiable in the classical sense. In analogy with the definition of Sobolev spaces (Section 5.2.2, \cite{evans2010partial}), $H \Lambda^k(\Omega)$ is defined to be the space of differential forms in $ L^2 \Lambda^k(\Omega)$ with a weak exterior derivative in $ L^2 \Lambda^{k+1}(\Omega)$\ (\cite{arnold200finite}, page 17), 
\begin{equation}
\begin{aligned}
H \Lambda^k(\Omega) := \{\lambda^k \in L^2\Lambda^k(\Omega) \mid  d\lambda^k \in L^2\Lambda^{k+1}(\Omega) \}.
\end{aligned}
\end{equation}
We also extend the inner product $g$ defined in \eqref{eq:intro1} to $L^2 \Lambda^k(\Omega)$ for two $k$-forms $\alpha^k$ and $\beta^k$, Section 4, \cite{arnold200finite}
   \begin{equation}\label{eq:intro2}
 \begin{aligned}
\langle\alpha^k,\beta^k\rangle_{L^2\Lambda^k(\Omega)} = g(\alpha^k,\beta^k) = \int_\Omega  \alpha^k(\mu) \beta^k(\mu) d\mu.
 \end{aligned}
 \end{equation}
 Using \eqref{eq:intro2} and extending Definition \ref{def:Hodgestar} to $L^2 \Lambda^k(\Omega)$, Section 4, \cite{arnold200finite}, we obtain,
  \begin{equation}\label{eq:Hodgestar1}
 \begin{aligned}
 \int_\Omega\lambda \wedge \ast \omega = g(\omega, \lambda), \quad \quad \lambda, \ \omega \in L^2 \Lambda^{k}(\Omega).
 \end{aligned}
 \end{equation}
 Combining \eqref{eq:intro2} and \eqref{eq:Hodgestar1}, we finally obtain
   \begin{equation}\label{eq:intro}
 \begin{aligned}
\langle\alpha^k,\beta^k\rangle_{L^2\Lambda^k(\Omega)} = g(\alpha^k,\beta^k) = \int_\Omega \alpha^k \wedge \ast \beta^k.
 \end{aligned}
 \end{equation}
This also gives the Hilbert space $H\Lambda^{k}(\Omega)$ under the inner product, Section 4.2, \cite{arnold200finite},
   \begin{equation}\label{eq:sob2}
  \begin{aligned}
 \langle \alpha , \beta \rangle_{H\Lambda^k(\Omega)} =  \langle \alpha,\beta \rangle_{L^2 \Lambda^k(\Omega)} + \langle d\alpha,d\beta \rangle_{L^2 \Lambda^{k+1}(\Omega)}.
    \end{aligned}
  \end{equation}
  The space $H\Lambda^0(\Omega)$ coincides with $H^1\Lambda^0(\Omega)$ (or simply $H^1(\Omega)$), while the space  $H\Lambda^n(\Omega)$ coincides with $L^2\Lambda^n(\Omega)$.\\
 Since we deal with boundary controlled systems we need to extend the differential forms to domains with a boundary. Using the theory of trace operators in Sobolev spaces, we find that $ \mathrm{tr} : \Lambda^k(\Omega) \rightarrow \Lambda^k(\partial\Omega)$ extends by continuity to a mapping of $ H^1\Lambda^k(\Omega)$ onto the space $ H^{1/2}\Lambda^k(\partial\Omega)$. Next, consider the space $H \Lambda^k(\Omega)$. The trace of $\lambda \in H\Lambda^k(\Omega)$ is a bounded operator on $H\Lambda^k(\Omega)$ with values in $H^{-1/2}\Lambda^k(\partial \Omega)$. Given $\rho \in H^{1/2}\Lambda^k(\partial\Omega)$ the Hodge star of any $k$-form with respect to the boundary is given by $\bar{\ast} \rho \in H^{1/2}\Lambda^{n-k-1}(\partial\Omega)$. We can then state the integration by parts rule for $\lambda  \in H\Lambda^{k}(\Omega), \  \mu \in H^1\Lambda^{n-k-1}(\Omega)$ with $\mathrm{tr}\mu = \bar{\ast}\rho$ as
 \begin{equation}\label{eq:intp1}
 \begin{aligned}
 \langle d\lambda \vert \mu \rangle _\Omega =  \langle  \lambda \vert \mu \rangle _{\partial \Omega} - (-1)^k \langle \lambda \vert d\mu \rangle _\Omega ,
 \end{aligned}
 \end{equation}
 where the pairing at the boundary is interpreted as the duality pairing between $H^{-1/2}\Lambda^k(\partial\Omega)$ and $H^{1/2}\Lambda^{n-k-1}(\partial\Omega)$,  (\cite{arnold200finite}, page 19).
\section{Distributed port-Hamiltonian systems}\label {sec:conPH}
\subsection{Dirac structure}\label{sec:condirac}
The concept of a Dirac structure, as introduced in \cite{courant1988beyond,courant1990dirac}, is the geometrical notion formalizing general power-conserving interconnections, thereby allowing the Hamiltonian formulation of various kinds of combined systems.
The definition of a Dirac structure is as follows. Let $ \mathcal{F} $ and $\mathcal{E} $ be linear spaces, with a bilinear operation : $\mathcal{F} \times \mathcal{E} \rightarrow \mathbb{R} $. The bilinear product is denoted as $ \langle e \vert f \rangle, f \in \mathcal{F},e \in \mathcal{E}$. We call this bilinear pairing a non-degenerate duality pairing if $\langle e \vert f \rangle = 0$ for all $f \in \mathcal{F}$ implies $e = 0$ and if $\langle e \vert f \rangle = 0$ for all $e \in \mathcal{E}$ implies $f = 0$. By symmetrizing the pairing we get a symmetric bilinear pairing $\langle\langle \ , \ \rangle\rangle $ on $ \mathcal{F} \times \mathcal{E} $, with values in $\mathbb{R} $ given by 
\begin{equation}\label{eq:bilinearpair}
\begin{aligned}
 \langle \langle (f_1,e_1),(f_2,e_2) \rangle\rangle =  \langle e_1 \vert f_2 \rangle + \langle e_2 \vert f_1 \rangle,  \ \ \ \ \ \ \ (f_i,e_i) \in  \mathcal{F} \times \mathcal{E}.
 \end{aligned}
 \end{equation}
 A Dirac structure is a linear subspace $ \mathcal{D} \subset \mathcal{F} \times \mathcal{E} $ such that $\mathcal{D} = \mathcal{D}^{\perp} $ with $ {\perp}$ denoting the orthogonal complement with respect to the bilinear pairing  $\langle\langle \ , \ \rangle\rangle $, \cite{van2014port}.\\
From the above definition it follows that for any $ (f,e)$ in the Dirac structure $\mathcal{D} $ we have,
\begin{equation}\label{eq:power}
\begin{aligned}
0=\langle \langle (f,e),(f,e) \rangle\rangle =  2 \langle e \vert f \rangle. 
\end{aligned}
\end{equation}
Thus, if $(f,e)$ is a pair of power variables, where $\langle e \vert f \rangle$ equals the power, then the condition $ (f,e) \in \mathcal{D} $ implies power conservation $ \langle e \vert f \rangle =0 $.
\subsection{The Stokes-Dirac structure}\label{sec:stokesdirac}
The key concept in the treatment of the underlying geometric framework of a distributed-parameter port-Hamiltonian system with non-zero energy flow through the boundary is the introduction of a special type of Dirac structure. This structure is introduced by selecting suitable spaces of differential forms on the spatial domain and its boundary, making use of Stokes theorem.\\ 
We now define the linear space over the $n$-dimensional oriented manifold $\Omega$ 
\begin{equation}\label{eq:spacea}
\begin{aligned}
\mathcal{F}_{p,q} = L^2 \Lambda^p(\Omega)  \times  L^2 \Lambda^q(\Omega) \times  H^{1/2} \Lambda^{n-p}(\partial\Omega),
\end{aligned}
\end{equation}
for any pair $p$, $q$ of non-negative integers satisfying
\begin{equation}\label{eq:spaceb}
\begin{aligned}
 p+q = n+1,
 \end{aligned}
\end{equation}
and correspondingly we define
\begin{equation}\label{eq:spacec}
\begin{aligned}
\mathcal{E}_{p,q} = H^1\Lambda^{n-p}(\Omega)  \times H \Lambda^{n-q}(\Omega) \times   H^{-1/2}\Lambda^{n-q}(\partial\Omega).
\end{aligned}
\end{equation}
By Sections \ref{sec:smoothform}, \ref{sec:condirac} and \eqref{eq:bilinearpair} we have the symmetric non-degenerate bilinear pairing (using $L^2$ as the pivot space in the duality pairing)
\begin{equation}\label{eq:power_2}
\begin{aligned}
&\langle \langle (f_1^{p},f_1^{q},f_1^{b},e_1^{p},e_1^{q},e_1^{b}),(f_2^{p},f_2^{q},f_2^{b},e_2^{p},e_2^{q},e_2^{b}) \rangle\rangle \\ 
&=  \int_{\Omega} [e_1^{p} \wedge f_2^{p} + e_1^{q} \wedge f_2^{q} + e_2^{p} \wedge f_1^{p} + e_2^{q} \wedge f_1^{q}] + \int_{\partial \Omega} [e_1^{b} \wedge f_2^{b} +  e_2^{b} \wedge f_1^{b}] \ ,
\end{aligned}
\end{equation}
for $f_1^{p},f_2^{p} \in L^2\Lambda^p(\Omega)$, $f_1^{q},f_2^{q}  \in  L^2\Lambda^q(\Omega)$, $f_1^{b},f_2^{b}  \in  H^{1/2}\Lambda^{n-p}(\partial \Omega)$ and $e_1^{p},e_2^{p}  \in H^1\Lambda^{n-p}(\Omega)$, $e_1^{q},e_2^{q} \in H\Lambda^{n-q}(\Omega)$, $e_1^{b},e_2^{b} \in  H^{-1/2}\Lambda^{n-q}(\partial \Omega)$.\\
Note, using $H^1 \Lambda^{n-p}(\Omega) \subset L^2\Lambda^{n-p}(\Omega)$ and $H \Lambda^{n-q}(\Omega) \subset L^2\Lambda^{n-q}(\Omega)$ we have on $\Omega$ duality between $L^2\Lambda^p(\Omega)$ and $L^2\Lambda^{n-p}(\Omega)$, and between $L^2\Lambda^q(\Omega)$ and $L^2\Lambda^{n-q}(\Omega)$. At the boundary $\partial \Omega$ we have duality between $H^{1/2}\Lambda^{n-p}(\Omega)$ and $H^{-1/2}\Lambda^{n-q}(\Omega)$ using the relation $ p + q = n+1 $.\\
We define the linear subspace $\mathcal{D} $ of $\mathcal {F}_{p,q} \times \mathcal {E}_{p,q}$ as
\begin{equation}\label{eq:stodir}
\begin{aligned}
 \mathcal {D} = \Bigg\{ (f^p,f^q,f^b,e^p,e^q,e^b) \in  \mathcal {F}_{p,q} \times \mathcal {E}_{p,q} \Big \vert \left[ \begin{array}{c} f^p\\ f^q\\ \end{array} \right] =  \begin{bmatrix} 0  & (-1)^{r_1} \textit{d} \\  \textit{d} & 0 \\ \end{bmatrix} \left[ \begin{array}{c} e^p\\ e^q\\ \end{array} \right] ,\\
\left[ \begin{array}{c} f^b\\ e^b\\ \end{array} \right] =  \begin{bmatrix} 1  & 0  \\   0 & -(-1)^{n-q} \\ \end{bmatrix} \left[ \begin{array}{c} \mathrm{tr}(e^p)\\ \mathrm{tr}(e^q) \\ \end{array} \right] \Bigg \}, 
\end{aligned}
\end{equation}
 where $d$ is the exterior derivative operator and $r_1$ is defined as 
  \begin{equation}\label{eq:r}
  \begin{aligned}
  r_1 &= pq + 1.
  \end{aligned}
  \end{equation}
  For smooth differential forms, it is proven in \cite{van2002hamiltonian} that $\mathcal{D}$ is a Dirac structure. In \cite{Dirac_Sobolev} we will prove this also holds for the Sobolev spaces $\mathcal{F}_{p,q}$ and $\mathcal{E}_{p,q}$.
\subsection{Distributed-parameter port-Hamiltonian systems}\label{sec:distributed}
We consider the Hamiltonian density function $$ \mathcal{H} : L^2 {\Lambda}^p (\Omega) \times L^2 {\Lambda}^q (\Omega) \times \Omega \rightarrow  L^2 {\Lambda}^n (\Omega),$$ which results in the total Hamiltonian
$$ \textit{H}(\alpha^p,\alpha^q) = \int_\Omega \mathcal{H}(\alpha^p,\alpha^q,z)\in \mathbb{R}.$$
Following \cite{van2002hamiltonian}, if $\alpha^p, \partial \alpha^p \in L^2\Lambda^p(\Omega)$ and $\alpha^q, \partial \alpha^q \in L^2\Lambda^q(\Omega)$, then under weak smoothness conditions we have 
\begin{equation}\label{eq:wes}
\begin{aligned}
\mathit{H}(\alpha^p + \partial \alpha^p , \alpha^q + \partial \alpha^q) &= \int_\Omega \mathcal{H}(\alpha^p, \alpha^q,z) + [  \delta_p  \mathit{H} \wedge \partial \alpha^p + \delta_q \mathit{H}\wedge \partial \alpha^q] \\ 
& \ \ \ \ \ + \mathrm{ higher\ order\  terms\  in} \  \partial \alpha^p,  \partial \alpha^q,
\end{aligned}
\end{equation}
for certain differential forms 
\begin{equation}\label{eq:betadefinition}
\begin{aligned}
 \beta^p &= \delta_p\mathit{H} \in L^2{\Lambda}^{n-p} (\Omega),\\
 \beta^q&= \delta_q\mathit{H} \in  L^2{\Lambda}^{n-q} (\Omega),
 \end{aligned}
 \end{equation}
 with $(\delta_p\mathit{H}, \delta_q\mathit{H})$ the partial functional derivatives of $\mathit{H}$ \cite{marsden2013introduction}.\\
If we consider the time dependent function $(\alpha^p(t),\alpha^q(t)) \in L^2\Lambda^p(\Omega) \times L^2\Lambda^q(\Omega)$, where we assume that the coefficients are smooth functions of time, and the Hamiltonian $\textit{H}(t):=  \textit{H}(\alpha^p(t),\alpha^q(t))$ is evaluated along this trajectory, then by the chain rule
\begin{equation}\label{eq:hamilton1}
 \frac{d\textit{H}}{d\textit{t}} = \int_ \Omega \Bigg [ \beta^p \wedge \frac{\partial\alpha^{p}}{\partial t}+ \beta^q \wedge \frac{\partial\alpha^{q}}{\partial t}\Bigg ].
\end{equation}
The differential forms $(\frac{\partial \alpha^p}{\partial t}, \frac{\partial \alpha^q}{\partial t})$ are the generalized velocities of the energy variables $\alpha^p, \alpha^q$.
\begin{definition}\label{def:stokesdirac}
A \textbf{distributed-parameter port-Hamiltonian system} on an $n$-dimensional oriented manifold $\Omega$, with state space $\mathcal{F}_{p,q} \times \mathcal{E}_{p,q}$ (with $p+q = n +1$), Stokes-Dirac structure $ \mathcal{D}$ stated in \eqref{eq:stodir} and Hamiltonian \textit{H}, is given by the conservation laws 
\begin{equation}\label{eq:powbal}
\begin{aligned}
\left[ \begin{array}{c} -  \frac{\partial\alpha^{p}}{\partial t} \\ - \frac{\partial\alpha^{q}}{\partial t} \\ \end{array} \right]  =  \begin{bmatrix} 0  & (-1)^{r_1} \textit{d} \\  \textit{d} & 0 \\ \end{bmatrix} \left[ \begin{array}{c}  \beta^p \\  \beta^q \\ \end{array} \right] ,\\
\left[ \begin{array}{c} f^b\\ e^b\\ \end{array} \right] =  \begin{bmatrix} 1  & 0  \\   0 & -(-1)^{n-q} \\ \end{bmatrix} \left[ \begin{array}{c} \mathrm{tr}(\beta^p) \\ \mathrm{tr}(\beta^q) \\ \end{array} \right] ,
\end{aligned}
\end{equation}
where $r_1=pq+1$ and $\beta^p,\beta^q$ as in \eqref{eq:betadefinition}.
\end{definition}
Comparing \eqref{eq:stodir} and \eqref{eq:powbal}, we see that in \eqref{eq:stodir} the relation
 \begin{equation}\label{eq:flows}
\begin{aligned}
 \textit{f}^{\ p}  = -  \frac{\partial\alpha^{p}}{\partial t},&\qquad \textit{e}^{p}  =  \beta^p,\\ 
\textit{f}^{\ q}  = -  \frac{\partial\alpha^{q}}{\partial t}, &\qquad \textit{e}^{q}  =  \beta^q,
\end{aligned}
\end{equation}
has been substituted.\\
By the power conservation property \eqref{eq:power} of any Dirac structure it follows that for any $(f^p,f^q,f^b,{e^p},{e^q},e^b) \in \mathcal{F}_{p,q} \times \mathcal{E}_{p,q}$, in the Stokes-Dirac structure \eqref{eq:stodir}, there holds
\begin{equation}\label{eq:power_1}
 \int_\Omega [e^p \wedge f^p + e^q \wedge f^q] + \int_{\partial \Omega} e^b \wedge f^b = 0. 
\end{equation}
Using \eqref{eq:hamilton1}, \eqref{eq:flows} and \eqref{eq:power_1} we obtain
$$ \frac{d\textit{H}}{d\textit{t}} = \int_{\partial \Omega} e^b \wedge f^b,$$ which states that the increase in internal energy in the domain $\Omega$ is equal to the power supplied to the system through the boundary $\partial \Omega$.
\section{Discontinuous finite element port-Hamiltonian system}\label {sec:DGFEMPH}
In this section we extend the definition of a Stokes-Dirac structure to discontinuous finite element spaces. We start the discussion with the tessellation of an $n$-dimensional oriented manifold and later discuss the function spaces and duality relations between the port-variables. This provides us with the basic constituents to define a Stokes-Dirac structure for the discontinuous finite element setting, which we call generalized Stokes-Dirac structure.
 \subsection{Tessellation}\label {sec:dual}
A finite element can be defined as the triplet $(K, \mathcal{P}, \mathcal{N})$, where $K$ is the element domain, $\mathcal{P}$ represents the basis functions for the (energy and co-energy) variables and $\mathcal{N}$ the nodal variables.\\
 We start by introducing a tessellation $\mathcal{T}_h=\{K\}$ of an orientable manifold $\Omega$ with Lipschitz continuous boundary $\partial \Omega$ into shape-regular non-overlapping simplicial elements (e.g.: intervals, triangles, tetrahedra, etc.). Each $K \in \mathcal{T}_h$ should be a bounded open set with non-empty interior and piecewise smooth boundary. We denote the set of interior faces in the tessellation as $\mathcal{F}_i$. Thus, $\Gamma_i \in \mathcal{F}_i$ if and only if $\exists  {K}_L, {K}_R \in \mathcal{T}_h$ with $K_L \neq K_R$ such that $\Gamma_i = \bar{K}_L \cap \bar{K}_R \neq \emptyset$. Likewise, we denote the set of exterior faces in the tessellation with $\mathcal{F}_o$. Thus, $\Gamma_o \in \mathcal{F}_o$ if and only if $\exists K \in \mathcal{T}_h$ such that $\Gamma_0 = \bar{K} \cap \partial \Omega \neq \emptyset$. The set of all faces is $ \mathcal{F}_h= \mathcal{F}_i\cup\mathcal{F}_o$. In the tessellation we have $\mathcal{\dim}(K) =\mathcal{\dim}(\Omega)$, which means $K$ is an $n$-cell for an $n$-dimensional oriented manifold $\Omega$. Let $\Delta(K)$ be the set of all subelements of element $K$, e.g. the element, its faces, edges and vertices. By the definition of an orientable manifold it follows that each element $K \in \mathcal{T}_h$ inherits the orientation of the manifold $\Omega$ such that all elements have a positive Jacobian. Following this orientation all  internal faces connect to a right and a left element, whose orientation is in the opposite direction when viewed locally from the face. As we focus on simplices in this paper, we will refer to the elements $K \in \mathcal{T}_h$ as simplices and $\Delta(K)$ to be the set of all subsimplices of the simplex $K$. Note that the set of all subsimplices of simplex $K$, $\Delta(K)$, also includes the simplex $K$.
 \subsection{Function spaces for port variables}\label{sec:disspace}
With the $n$-dimensional oriented manifold $\Omega$ discretized into simplices $K \in \mathcal{T}_h$, we can extend the function spaces \eqref{eq:spacea} and \eqref{eq:spacec}  to broken function spaces for the port variables to be used in a discontinuous Galerkin discretization,
\begin{equation}\label{eq:disspace}
\begin{aligned}
\mathcal{F}_h &=   L^2 \Lambda^p(\Omega)  \times  L^2 \Lambda^q(\Omega) \times  H^{1/2} \Lambda^{n-p}(\mathcal{F}_h), \\
\mathcal{E}_h &=   H^1\Lambda^{n-p}(\mathcal{T}_h)  \times H\Lambda^{n-q}(\mathcal{T}_h) \times   H^{-1/2}\Lambda^{n-q}(\mathcal{F}_h)\ ,
\end{aligned}
\end{equation}
with the broken Sobolev spaces of differential forms 
\begin{equation}
\begin{aligned}
H^1\Lambda^k(\mathcal{T}_h) &:= \{\lambda^k_h \in L^2\Lambda^k(\Omega) \mid  \lambda^k_h\vert_{K} \in H^1\Lambda^k(K) \quad \forall K\in \mathcal{T}_h\},\\
H \Lambda^k(\mathcal{T}_h) &:= \{\lambda^k_h \in L^2\Lambda^k(\Omega) \mid  \lambda^k_h\vert_{K} \in H\Lambda^k(K) \quad \forall K\in \mathcal{T}_h\},\\
H^{1/2}\Lambda^k(\mathcal{F}_h) &:= \{\lambda^k_h \in L^2\Lambda^k(\mathcal{F}_h) \mid  \lambda^k_h\vert_{F} \in H^{1/2}\Lambda^k(F) \quad \forall F \in \mathcal{F}_h\},\\
H^{-1/2}\Lambda^k(\mathcal{F}_h) &:= \{\lambda^k_h \in L^2\Lambda^k(\mathcal{F}_h) \mid  \lambda^k_h\vert_{F} \in H^{-1/2}\Lambda^k(F) \quad \forall F\in \mathcal{F}_h\}.
\end{aligned}
\end{equation}
Before defining the discontinuous finite element function spaces for the port variables, we will briefly describe the polynomial and broken polynomial spaces for differential forms. Let $\mathcal{P}_r(\mathbb{R}^n)$ and $\mathcal{H}_r(\mathbb{R}^n)$ denote, respectively, the spaces of polynomials in $n$ variables of degree at most $r$ and homogeneous polynomials of degree $r$. For more in depth understanding of the degree and basis of these polynomial spaces we refer to Sections 3 and 4 in \cite{arnold200finite}.\\
The space of all polynomials is $\mathcal{P}(\mathbb{R}^n) = \oplus^{\infty}_{r=0} \mathcal{H}_r(\mathbb{R}^n)$. The space of polynomial differential forms $\mathcal{P}_r\Lambda^k(\mathbb{R}^n)$ is defined to be the space of differential forms whose coefficient functions are from $\mathcal{P}_r(\mathbb{R}^n)$. Similarly, the space of homogeneous polynomial differential forms $\mathcal{H}_r\Lambda^k(\mathbb{R}^n)$ is the space of differential forms whose coefficient functions are from $\mathcal{H}_r(\mathbb{R}^n)$, \cite{arnold2013spaces, arnold200finite}. For $r < 0$ the spaces of polynomial differential forms $\mathcal{P}_r\Lambda^k(\mathbb{R}^n)$ and $\mathcal{H}_r\Lambda^k(\mathbb{R}^n)$ are the zero space. For each polynomial degree $r\ge0$, a homogeneous polynomial subcomplex of the de Rham complex (\cite{arnold200finite}, page 29) is, 
\begin{equation}\label{eq:homocomplex}
\begin{aligned}
0 \longrightarrow \mathcal{H}_r\Lambda^0\stackrel{d}{\longrightarrow} \mathcal{H}_{r-1}\Lambda^1\stackrel{d}{\longrightarrow} \dotsm \stackrel{d}{\longrightarrow}  \mathcal{H}_{r-n}\Lambda^n\longrightarrow0,
\end{aligned}
\end{equation}
which is exact, where the cohomology vanishes for $r<0$ and also for $r=0$ except for the lowest degree where the cohomology space is $\mathbb{R}$. Taking the direct sum over all polynomial with degree up to $r$ of the homogeneous polynomial de Rham complex \eqref{eq:homocomplex} gives the polynomial de Rham complex
\begin{equation}\label{eq:rham}
\begin{aligned}
\mathbb{R} \hookrightarrow \mathcal{P}_r\Lambda^0 \stackrel{d}{\longrightarrow} \mathcal{P}_{r-1}\Lambda^1 \stackrel{d}{\longrightarrow} \dotsm \stackrel{d}{\longrightarrow}  \mathcal{P}_{r-n}\Lambda^n \longrightarrow 0,
\end{aligned}
\end{equation}
which is exact for $r \ge 0$ \cite{arnold2006finite, arnold200finite}.\\
Using the polynomial differential form space $\mathcal{P}_r\Lambda^k(\mathbb{R}^n)$, the homogeneous polynomial differential form space $\mathcal{H}_r\Lambda^k(\mathbb{R}^n)$ and the Koszul differential operator $\kappa$, \cite{ arnold2013spaces, arnold200finite, loday2013cyclic}, we can define a third kind of polynomial de Rham complex as 
\begin{equation}\label{eq:negcomplex}
\begin{aligned}
\mathbb{R} \hookrightarrow \mathcal{P}^-_r\Lambda^0 \stackrel{d}{\longrightarrow} \mathcal{P}^-_r\Lambda^1\stackrel{d}{\longrightarrow} ....\stackrel{d}{\longrightarrow}  \mathcal{P}^-_r\Lambda^n\longrightarrow0, 
\end{aligned}
\end{equation}
which is exact for $r > 0$ and where $\mathcal{P}^-_r\Lambda^k= \mathcal{P}_{r-1}\Lambda^k+ \kappa \mathcal{H}_{r-1}\Lambda^{k+1}$, \cite{arnold2006finite, arnold200finite}.\\
For these polynomial differential form spaces we define the broken-polynomial differential form spaces as 
\begin{equation}\label{eq:brokenpoly}
\begin{aligned}
\mathcal{H}_r\Lambda^k(\mathcal{T}_h) &:= \{\lambda^k_h \in L^2\Lambda^k(\Omega) \mid \lambda^k_h\vert_{K} \in \mathcal{H}_r\Lambda^k(K)\ \forall K\in \mathcal{T}_h \},\\
\mathcal{P}_r\Lambda^k\mathcal{T}_h) &:= \{\lambda^k_h \in L^2\Lambda^k(\Omega) \mid \lambda^k_h\vert_{K} \in \mathcal{P}_r\Lambda^k(K)\ \forall K\in \mathcal{T}_h \},\\
\mathcal{P}^-_r\Lambda^k(\mathcal{T}_h) &:= \{\lambda^k_h \in L^2\Lambda^k(\Omega) \mid \lambda^k_h\vert_{K} \in \mathcal{P}^-_r\Lambda^k(K)\ \forall K\in \mathcal{T}_h\},
\end{aligned}
\end{equation}
with $\mathcal{H}_r\Lambda^k(K)= \mathcal{H}_r\Lambda^k(\mathbb{R}^n)\vert_K$, the restriction of $\mathcal{H}_r\Lambda^k(\mathbb{R}^n)$ to element $K \in \mathcal{T}_h$, and similar expressions for $\mathcal{P}_r \Lambda^k(K)$ and $\mathcal{P}^-_r\Lambda^k(K)$.\\
For any subsimplex $f$ of $K$ we restrict the broken polynomial differential form spaces to $f$ as
\begin{equation}\label{eq:brokenpoly1}
\begin{aligned}
\mathcal{H}_r\Lambda^k(f) &= \mathrm{tr}_{K,f}\mathcal{H}_r\Lambda^k(K),\\
\mathcal{P}_r\Lambda^k(f) &=  \mathrm{tr}_{K,f}\mathcal{P}_r\Lambda^k(K),\\
\mathcal{P}^-_r\Lambda^k(f) &= \mathrm{tr}_{K,f}\mathcal{P}^-_r\Lambda^k(K),
\end{aligned}
\end{equation}
where the notation $\mathrm{tr}_{K,f}$ represents the tangential trace at the subsimplex  $f \in \Delta(K)$ of the polynomial differential forms defined on the simplex $K$. The restriction \eqref{eq:brokenpoly1} is only valid if $k \leq \mathrm{\dim}(f)\leq n$, because a $k$-form cannot be defined for subsimplices with $\mathrm{\dim}(f) < k$.\\
The restriction of the spaces on element $K$ to the subsimplices $f \in \Delta(K)$ can be used to form the following de Rham complexes on the subsimplices $f$ of simplex $K$
\begin{equation}\label{eq:rham1}
\begin{aligned}
0 \longrightarrow \mathcal{P}_r\Lambda^0(f)\stackrel{d}{\longrightarrow} \mathcal{P}_{r-1}\Lambda^1(f) \stackrel{d}{\longrightarrow} \dotsm \stackrel{d}{\longrightarrow}  \mathcal{P}_{r-\mathrm{\dim}(f)}\Lambda^{\mathrm{\dim}(f)}(f)\longrightarrow0, \ r \ge 0,
\end{aligned}
\end{equation}
and 
\begin{equation}\label{eq:negcomplex1}
\begin{aligned}
0 \longrightarrow \mathcal{P}^-_r\Lambda^0(f) \stackrel{d}{\longrightarrow} \mathcal{P}^-_r\Lambda^1(f) \stackrel{d}{\longrightarrow} ....\stackrel{d}{\longrightarrow}  \mathcal{P}^-_r\Lambda^{\mathrm{\dim}(f)}(f) \longrightarrow 0, \ r>0.
\end{aligned}
\end{equation}

Consider a simplex $K \in \mathcal{T}_h$ and let $f \in \Delta(K)$. The dual of the polynomial differential form space $\mathcal{P}_r \Lambda^k(K)$ for $0 \leq k \leq n, r \ge 1$ is denoted by $\mathcal{P}_r \Lambda^k(K)^{*}$, and can be understood in the following manner. For $f \in \Delta(K)$ with $k\le\mathrm{dim}(f)\le k+r-1$, let $\eta_f$ be an element of $\mathcal{P}^{-}_{r+k-\mathrm{\dim}(f)}\Lambda^{\mathrm{\dim}(f)-k}(f)$. Note, the highest dimension of any subsimplex in $\Delta(K)$ for an $n$-dimensional simplex $K$ is $n$, hence we also have $\mathrm{\dim}(f) \leq \mathrm{\min}(n, k+r-1)$. Then the following defines a linear functional on $\mathcal{P}_r \Lambda^k(K)$ 
\begin{equation}\label{eq:linearpro1}
\begin{aligned}
\phi(\omega) = \int_f \mathrm{tr}_{K,f} \omega \wedge \eta_f, \qquad \omega \in \mathcal{P}_r \Lambda^k(K).
\end{aligned}
\end{equation}
Hence, for every $f$ and $\eta_f$ satisfying these conditions, we can identify a dual element of $\mathcal{P}_r \Lambda^k(K)$. From Theorems 4.8 and 4.10, \cite{arnold200finite}, it follows that this forms the basis to represent all dual elements of $\mathcal{P}_r\Lambda^k(K)^{*}$ in a unique way, i.e.,
\begin{equation}\label{eq:space1}
\begin{aligned}
 \mathcal{P}_r \Lambda^k(K)^{*}\cong \underset{f \in \Delta(K),\ \mathrm{\dim}(f)\in[k,\ k+r-1]}\oplus \mathcal{P}^{-}_{r+k-\mathrm{\dim}(f)}\Lambda^{\mathrm{\dim}(f)-k}(f),
 \end{aligned}
 \end{equation}
 with $0 \leq k \leq n,\ r \ge 1$.\\
 So, any element $\mu^k_h \in \mathcal{P}_r\Lambda^k(K)^{*}$ can be written as a vector comprising of elements $\mu^k_{h,f} \in \mathcal{P}^{-}_{r+k-\mathrm{\dim}(f)}\Lambda^{\mathrm{\dim}(f)-k}(f)$ defined on the subelements $f \in \Delta(K)$, i.e.
 \begin{equation}\label{eq:vectorformulation}
 \begin{aligned}
 \mu^k_h = (\mu^k_{h,f_1}, \cdots, \mu^k_{h,f_m}), \quad f_1, \cdots, f_m \in \Delta(K).
 \end{aligned}
 \end{equation}
Using the representation \eqref{eq:vectorformulation}, for $\lambda^ k_h \in \mathcal{P}_r \Lambda^k(K)$, $\mu^k_h \in \mathcal{P}_r\Lambda^k(K)^{*}$ and \\
$\mu^k_{h,f}  \in  \mathcal{P}^{-}_{r+k-\mathrm{\dim}(f)}\Lambda^{\mathrm{\dim}(f)-k}(f)$, $0 \leq k \leq n,\ r \ge 1$, we define the duality product
\begin{equation}\label{eq:dualpro1}
\begin{aligned}
\langle \lambda^k_h \mid \mu^k_h \rangle_K= \sum_{f \in \Delta(K),\ \mathrm{\dim}(f)\in[k,\ k+r-1]}  \int_f  \mathrm{tr}_{K,f} \lambda^k_h \wedge \mu^k_{h,f}.
\end{aligned}
\end{equation}
The dual of the polynomial differential form space $\mathcal{P}^{-}_r \Lambda^k(K)$ for $0 \leq k \leq n ,r \ge1$ is denoted by $\mathcal{P}^{-}_r \Lambda^k(K)^{*}$, and is constructed in a similar manner. For each $f \in \Delta(K)$ with $k\le\mathrm{dim}(f)\le k+r-1$, let $\eta_f$ be an element of $\mathcal{P}_{r+k-\mathrm{\dim}(f)-1}\Lambda^{\mathrm{\dim}(f)-k}(f)$. Then the following defines a linear functional on $\mathcal{P}^{-}_r \Lambda^k(K)$ 
\begin{equation}\label{eq:linearpro2}
\begin{aligned}
\phi(\omega) = \int_f \mathrm{tr}_{K,f} \omega \wedge \eta_f, \qquad \omega \in \mathcal{P}^{-}_r \Lambda^k(K).
\end{aligned}
\end{equation}
Hence, for every $f$ and $\eta_f$ satisfying these conditions, we can identify a dual element of $\mathcal{P}^{-}_r \Lambda^k(K)$. From Theorem 4.14, \cite{arnold200finite}, it follows that this forms the basis to represent all dual elements of $\mathcal{P}^-_r\Lambda^k(K)^{*}$ in a unique way,
\begin{equation}\label{eq:space2}
\begin{aligned}
\mathcal{P}^{-}_r \Lambda^k(K)^{*}\cong \underset{f \in \Delta(K),\mathrm{\dim}(f)\in[k,\ k+r-1]} \oplus\mathcal{P}_{r+k-\mathrm{\dim}(f)-1}\Lambda^{\mathrm{\dim}(f)-k}(f).
 \end{aligned}
 \end{equation}
We use this identification for the dual and the duality product on $\mathcal{P}^{-}_r \Lambda^k(K)$ for the port-Hamiltonian variables $\lambda^k_h$ and $\mu^k_h$ . So similar to \eqref{eq:dualpro1}, for $\lambda^ k_h \in \mathcal{P}^{-}_r \Lambda^k(K)$, $\mu^k_h \in \mathcal{P}^{-}_r \Lambda^k(K)^{*}$ and $\mu^k_{h,f} \in  \mathcal{P}_{r+k-\mathrm{\dim}(f)-1}\Lambda^{\mathrm{\dim}(f)-k}(f)$, $0 \leq k \leq n,\ r \ge 1$, we have the duality product
\begin{equation}\label{eq:dualpro2}
\begin{aligned}
\langle \lambda^k_h \mid \mu^k_h \rangle_K = \sum_{f \in \Delta(K),\mathrm{\dim}(f)\in[k,\ k+r-1]} \int_f \mathrm{tr}_{K,f} \lambda^k_h \wedge \mu^k_{h,f},
\end{aligned}
\end{equation}
where $\mu^k_h = (\mu^k_{h,f_1}, \cdots, \mu^k_{h,f_m}), f_1, \cdots, f_m \in \Delta(K)$, as was explained in \eqref{eq:vectorformulation}.
\par
For $p+q=n+1$ we now consider two cases for the pairs of port variables.\\
\textbf{Case 1:}
\begin{equation} \label{eq:funcsp1}
\begin{aligned}
E_q(K) &:=   \mathcal{P}_{r+1}\Lambda^{n-q}(K),\ F_p(K) := \mathcal{P}_{r}\Lambda^{p}(K),
\end{aligned}
\end{equation}
These two spaces of polynomial differential forms are related via the exterior derivative operation 
\begin{equation}\label{eq:derivative1}
\begin{aligned}
d_{pq} : E_q(K) \rightarrow F_p(K),
\end{aligned}
\end{equation}
where $d_{pq}$ is the (usual) exterior derivative for differential forms, see also \eqref{eq:rham1}.\\
The dual of the differential form space $F_p$ is denoted by $E_p$ and the dual of the differential form space $E_q$ is denoted by $F_q$. Using \eqref{eq:space1}, the spaces $E_p$ and $F_q$ for Case 1 become
\begin{equation} \label{eq:funcsp2}
\begin{aligned}
E_p(K) &:=\underset{f \in \Delta(K),\mathrm{\dim}(f)\in[p,\ p+r-1]}{\oplus}\mathcal{P}^{-}_{r+p-\mathrm{\dim}(f)}\Lambda^{\mathrm{\dim}(f) -p}(f) \\
&=: \underset{f \in \Delta(K),\mathrm{\dim}(f)\in[p,\ p+r-1]}{\oplus} E_p(f),\\
F_q(K) &:= \underset{f \in \Delta(K),\mathrm{\dim}(f) \in[n-q,\ n-q+r]}{\oplus}\mathcal{P}^{-}_{r+1+n-q-\mathrm{\dim}(f)}\Lambda^{\mathrm{\dim}(f) -n+q}(f)\\
&=: \underset{f \in \Delta(K),\mathrm{\dim}(f) \in[n-q,\ n-q+r]}{\oplus} F_q(f)
\end{aligned}
\end{equation}
with $0 \leq k \leq n,\ r \ge 1$.\\
We have a relationship between $E_p$ and $F_q$ via the exterior derivative operation, that is 
\begin{equation}\label{eq:derivative2}
\begin{aligned}
d_{qp} : E_p \rightarrow F_q,
\end{aligned}
\end{equation}
where $d_{qp}$ is the (usual) exterior derivative for differential forms applied on the subsimplices $f$ of simplex $K$, see also \eqref{eq:negcomplex1}. \\
\textbf{Case 2:}
\begin{equation} \label{eq:funcsp11}
\begin{aligned}
E_q(K) &:=   \mathcal{P}^{-}_{r}\Lambda^{n-q}(K),\ F_p(K) := \mathcal{P}^{-}_{r}\Lambda^{p}(K).
\end{aligned}
\end{equation}
These two spaces of polynomial differential forms are related via the exterior derivative operation 
\begin{equation}\label{eq:derivative11}
\begin{aligned}
d_{pq} : E_q(K) \rightarrow F_p(K),
\end{aligned}
\end{equation}
where $d_{pq}$ is the (usual) exterior derivative for differential forms, see also \eqref{eq:negcomplex1}.\\
The dual of the differential form space $F_p$ is denoted by $E_p$ and the dual of the differential form space $E_q$ is denoted by $F_q$. Using \eqref{eq:space2}, the spaces $E_p$ and $F_q$ become
\begin{equation} \label{eq:funcsp12}
\begin{aligned}
E_p(K) &:=\underset{f \in \Delta(K),\mathrm{\dim}(f)\in[p,\ p+r-1]}{\oplus}\mathcal{P}_{r+p-\mathrm{\dim}(f)-1}\Lambda^{\mathrm{\dim}(f) -p}(f) \\
&=:\underset{f \in \Delta(K),\mathrm{\dim}(f)\in[p,\ p+r-1]}{\oplus}E_p(f),\\
F_q(K) &:= \underset{f \in \Delta(K),\mathrm{\dim}(f)\in[n-q,\ n-q+r-1]}{\oplus}\mathcal{P}_{r+n-q-\mathrm{\dim}(f)-1}\Lambda^{\mathrm{\dim}(f) -n+q}(f)\\
&=:\underset{f \in \Delta(K),\mathrm{\dim}(f)\in[n-q,\ n-q+r-1]}{\oplus}F_q(f).
\end{aligned}
\end{equation}
We have a relationship between $E_p$ and $F_q$ via the exterior derivative operation, that is 
\begin{equation}\label{eq:derivative22}
\begin{aligned}
d_{qp} : E_p \rightarrow F_q,
\end{aligned}
\end{equation}
where $d_{qp}$ is the (usual) exterior derivative for differential forms applied on the subsimplices $f$ of simplex $K$, see also \eqref{eq:rham1}.
\subsection{Duality product on element $K$ for discrete dual port-variable pairs }\label{sec:disdualityelement}
The port-variables have a well-defined non-degenerate duality product \eqref{eq:pair}. We extend the duality product now to the discrete dual port-variable pairs. On each subsimplex $f \in \Delta(K)$, we have a duality product between $e^p_{h,f} \in E_p(f)$ and $\mathrm{tr}_{K,f}f^p_h \in \mathrm{tr}_{K,f} F_p(K)$. Summing over all $f \in \Delta(K)$ and using the fact that $\{E_p(f),\mathrm{tr}_{K,f}F_p(K)\}$ are dual pairs we have 
\begin{equation}
\begin{aligned}
\langle e^p_h \mid f^p_h \rangle_K =  (-1)^{p(n-p)} \langle f^p_h \mid e^p_h \rangle = (-1)^{p(n-p)} \underset{f \in \Delta(K)} \sum   \int_f  \mathrm{tr}_{K,f} f^p_{h} \wedge  e^p_{h,f},
\end{aligned}
\end{equation}
where $e^p_h = (e^p_{h,f_1}, \cdots, e^p_{h,f_m}), f_1, \cdots, f_m \in \Delta(K)$, as explained in \eqref{eq:vectorformulation}.\\
Using Theorem 4.8 in \cite{arnold200finite}, we can prove that if 
\begin{equation}\label{eq:duality13}
\begin{aligned}
\langle e^p_h \mid f^p_h \rangle_K = (-1)^{p(n-p)} \underset{f \in \Delta(K)} \sum   \int_f  \mathrm{tr}_{K,f} f^p_{h} \wedge  e^p_{h,f} = 0 \quad \forall e^p_{h,f} \in E_p(f), 
\end{aligned}
\end{equation}
then $f^p_h =0$.\\
Similarly, on each subsimplex $f \in \Delta(K)$ we have a duality product between $\mathrm{tr}_{K,f} e^q_h \in \mathrm{tr}_{K,f} E_q(K)$ and $f^q_{h,f} \in F_q(f)$. Summing over all $f \in \Delta(K)$ and using the fact that $\{\mathrm{tr}_{K,f}E_q(K),F_q(f)\}$ are dual pairs we have that if
\begin{equation}\label{eq:duality3}
\begin{aligned}
\langle e^q_h \mid f^q_h \rangle_K = \underset{f \in \Delta(K)}\sum \int_f  \mathrm{tr}_{K,f}  e^q_{h} \wedge f^q_{h,f}  = 0  \qquad \forall f^q_{h,f} \in F_q(f),
\end{aligned}
\end{equation}
where $f^q_h = (f^q_{h,f_1}, \cdots, f^q_{h,f_m}), f_1, \cdots, f_m \in \Delta(K)$, as explained in \eqref{eq:vectorformulation}, then $e^q_h =0$, see Theorem 4.12 in \cite{arnold200finite}. Hence, on element $K$ there exists a non-degenerate duality product between $E_p(K)$ and $F_p(K)$ and a non-degenerate duality product between $E_q(K)$ and $F_q(K)$, given by \eqref{eq:dualpro1} and \eqref{eq:dualpro2}.
\subsection{Duality product on subelements of element $K$ for discrete dual port-variable pairs}\label{sec:disdualityboundary}
The trace operator $\mathrm{tr}_{f,\partial f}$ restricts the spaces defined on the subsimplices $f \in \Delta(K)$ to the boundaries $\partial f$. For example: A $(k-1)$-form, $\lambda_{h,f}$ defined on a subsimplex $f \in \Delta(K)$ with $k \le \dim(f) \leq n$, is restricted on the boundary $\partial f$ as 
\begin{equation}\label{eq:boundspace}
\begin{aligned}
\lambda^k_{h,f}(\partial f) = \mathrm{tr}_{f,\partial f}\lambda^k_{h,f}. 
\end{aligned}
\end{equation}
We stated the duality product over the element $K$ as the sum of duality products over the subsimplices $f \in \Delta(K)$. Similarly, we state the duality product over the boundaries of $K$ as the sum of duality products over the boundaries $ \partial f$ of $f \in \Delta (K)$, which are denoted as $\partial\Delta(K)$.\\
Considering the theory of port-Hamiltonian systems, at the boundaries $\partial \Omega$ of the oriented manifold $\Omega$ we have using the Dirac structure \eqref{eq:stodir} duality between the trace of efforts at the boundaries. Hence, at the boundaries $\partial f$ of each $f \in \Delta(K)$ we have to express the duality between $E_p(\partial \Delta(K))$ and $E_q(\partial \Delta(K))$. Since the spaces $E_p(\partial \Delta(K))$ and $E_q(\partial \Delta(K))$ will in general not have the same dimension we need to do this in two steps. First, we define a bilinear product between $E_p(\partial \Delta(K))$ and $E_q(\partial \Delta(K))$. Next, we will introduce a conversion operator $Q$ that accounts for the differences in the dimension of $E_p(\partial \Delta(K))$ and $E_q(\partial \Delta(K))$.\\ Using \eqref{eq:boundspace} and Theorems 4.15 and 4.21 in \cite{arnold200finite}, these two spaces at the boundary $\partial f$ of $f \in \Delta(K)$ are,
\begin{equation}\label{eq:bounddef}
\begin{aligned}
E_p(\partial \Delta(K)) &=\underset{f \in \Delta(K), \mathrm{\dim}(f)\in[p,r+p-1]}\oplus \mathrm{tr}_{f,\partial f} E_p(f)\\
E_q(\partial \Delta(K)) &=\underset{f \in \Delta(K), \mathrm{\dim}(f)\in[p,r+p-1]}\oplus \mathrm{tr}_{K,\partial f}E_q(K)\\
\end{aligned}
\end{equation}
Using the relation $p+q = n+1$, these spaces for Case 1 and Case 2 then become\\
\textbf{Case 1:} 
\begin{equation}
\begin{aligned}
E_p(\partial \Delta(K)) &=\underset{f\in\Delta(K), \mathrm{\dim}(f)\in[p,r+p-1]}\oplus \mathrm{tr}_{f,\partial f}\mathcal{P}^{-}_{r+p-\mathrm{\dim}(f)}\Lambda^{\mathrm{\dim}(f)-p}(f),\\
E_q(\partial \Delta(K))&=\underset{f\in\Delta(K), \mathrm{\dim}(f)\in[p,r+p-1]}\oplus \mathrm{tr}_{K,\partial f} \mathcal{P}_{r+1}\Lambda^{n-q}(K).
\end{aligned}
\end{equation}
\textbf{Case 2:}
\begin{equation}
\begin{aligned}
E_p(\partial \Delta(K)) &=\underset{f\in\Delta(K), \mathrm{\dim}(f)\in[p,r+p-1]}\oplus \mathrm{tr}_{f,\partial f}\mathcal{P}_{r+p-\mathrm{\dim}(f)-1}\Lambda^{\mathrm{\dim}(f)-p}(f),\\
E_q(\partial \Delta(K))&=\underset{f\in\Delta(K), \mathrm{\dim}(f)\in[p,r+p-1]}\oplus \mathrm{tr}_{K,\partial f} \mathcal{P}^{-}_{r}\Lambda^{n-q}(K).
\end{aligned}
\end{equation}
The bilinear product between $u^p_h \in E_p(\partial \Delta(K)), u^q_h \in E_q(\partial \Delta(K))$ is stated as 
\begin{equation}\label{eq:boudual}
\begin{aligned}
\langle u^q_h\mid u^p_h\rangle_{\partial K} &=\langle \mathrm{tr} e^q_h\mid \mathrm{tr}  e^p_h\rangle_{\partial K}\\
 &=\underset{f\in\Delta(K), \mathrm{\dim}(f)\in[p,r+p-1]}\sum\int_{\partial f}  \mathrm{tr}_{K,\partial f} e^q_h\wedge \mathrm{tr}_{f,\partial f}e^p_{h,f},
\end{aligned}
\end{equation}
with $e^q_h \in E_p(K) , e^p_h \in E_p(K), e^p_{h,f} \in E_p(f)$ and $e^p_h$ can be represented as $e^p_h = (e^p_{h,f_1}, \cdots, e^p_{h,f_m}),$ $f_1, \cdots, f_m \in \Delta(K)$ as explained in \eqref{eq:vectorformulation}.\\ 
In general the function spaces $E_p(\partial \Delta(K))$ and $E_q(\partial \Delta(K))$ do not have the same dimensions. This poses the crucial problem for the bilinear product \eqref{eq:boudual} to be 
non-degenerate. Hence, in general \eqref{eq:boudual} is a bilinear product, but not a duality product. To convert this bilinear product into a duality product we will use a linear (surjective) operator $Q$. We will discuss the general construction of the operator $Q$ in the next subsection. Note that from now on with an abuse of notation we will write $\partial \Delta(K)$ as $\partial K$.
\subsubsection{The conversion operator Q}
Let us take two finite-dimensional linear spaces $X(\partial K)$ and $Y(\partial K)$ such that $\mathrm{\dim}(X(\partial K))\ge\mathrm{\dim}(Y(\partial K))$. Suppose that, as in \eqref{eq:boudual}, we have the bilinear product between $X(\partial K)$ and $Y(\partial K)$, $\langle x \mid y \rangle_{\partial K}$, $x \in X(\partial K)$ and $y \in Y(\partial K)$.\\
For fixed $x \in X(\partial K)$, $q_x(y) = \langle x \mid y \rangle_{\partial K}$, is a linear map from $Y(\partial K)$ to $\mathbb{R}$. Thus, there exists a unique ${y}^{\ast} \in Y^{\ast}(\partial K)$, ($Y^{\ast}(\partial K)$ is the dual function space to $Y(\partial K)$) such that $q_x(y) =  \langle y^{\ast} \mid y\rangle_{{Y(\partial K)}^{\ast} \times Y(\partial K)}$.\\
So, to $x \in X(\partial K)$ we have uniquely assigned a ${y}^{\ast} \in Y^{\ast}(\partial K)$. This we can do for all $x\in X(\partial K)$, and we define the operator $Q$,
\begin{equation}\label{eq:operatordual}
\begin{aligned}
Q : X(\partial K) \rightarrow Y(\partial K)^{\ast},
\end{aligned}
\end{equation}
such that $Q(x) = y^{\ast}$, and thus 
\begin{equation}\label{eq:operatordual1}
\begin{aligned}
q_x(y) = \langle x \mid y \rangle_{\partial K} = \langle Q(x) \mid y \rangle_{{Y(\partial K)}^{\ast} \times Y(\partial K)}.
\end{aligned}
\end{equation}
Since, ${y}^{\ast}$ is unique the operator $Q$ is well-defined. Also, for all $y \in Y(\partial K)$ 
\begin{equation}
\begin{aligned}
q_{\alpha x + \beta \tilde{x}}(y)&= \langle \alpha x + \beta \tilde{x} \mid y\rangle_{\partial K}\\
&=\alpha \langle x\mid y\rangle_{\partial K} + \beta \langle \tilde{x} \mid y\rangle_{\partial K}\\
&=\alpha \langle Q(x) \mid y \rangle_{{Y(\partial K)}^{\ast} \times Y(\partial K)} + \beta \langle  Q(\tilde{x}) \mid y\rangle_{{Y(\partial K)}^{\ast} \times Y(\partial K)}.
\end{aligned}
\end{equation}
On the other hand,
\begin{equation}
\begin{aligned}
q_{\alpha x + \beta \tilde{x}}(y)&= \langle Q(\alpha x + \beta \tilde{x}) \mid y \rangle_{{Y(\partial K)}^{\ast} \times Y(\partial K)}.
\end{aligned}
\end{equation}
Thus, $Q$ is a linear operator.
\begin{definition}
The bilinear product $\langle x \mid y \rangle_{\partial K}$ is defined to be half-degenerate if for any $y \in Y(\partial K)$ the assertion: $\langle x \mid y \rangle_{\partial K} = 0 \ \forall x \in X(\partial K)$ implies that $y=0$.
\end{definition}
\begin{lemma}\label{lem:operatorq}
The operator $Q$ of \eqref{eq:operatordual1} is surjective if and only if the bilinear product $\langle x \mid y \rangle_{\partial K}$ is half-degenerate.
\end{lemma}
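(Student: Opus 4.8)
The plan is to recognize the statement as the standard finite‑dimensional duality fact that a linear map is surjective precisely when its transpose is injective, applied to $Q\colon X(\partial K)\to Y(\partial K)^{*}$. I would keep the proof self‑contained, using only that the canonical evaluation pairing $Y(\partial K)^{*}\times Y(\partial K)\to\mathbb{R}$ is non‑degenerate and that, in finite dimensions, a proper subspace of $Y(\partial K)^{*}$ has a nonzero annihilator inside $Y(\partial K)^{**}\cong Y(\partial K)$.

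For the ``only if'' direction I would assume $Q$ surjective and take $y\in Y(\partial K)$ with $\langle x\mid y\rangle_{\partial K}=0$ for all $x\in X(\partial K)$; by the defining identity \eqref{eq:operatordual1} this reads $\langle Q(x)\mid y\rangle_{Y(\partial K)^{*}\times Y(\partial K)}=0$, so $\phi(y)=0$ for every $\phi$ in $\operatorname{im}(Q)=Y(\partial K)^{*}$, and non‑degeneracy of the evaluation pairing gives $y=0$, i.e.\ half‑degeneracy.

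For the ``if'' direction I would argue by contradiction: assuming half‑degeneracy but $\operatorname{im}(Q)\subsetneq Y(\partial K)^{*}$, finite‑dimensionality of $Y(\partial K)^{*}$ produces a nonzero $y\in Y(\partial K)$ annihilating $\operatorname{im}(Q)$; then $\langle x\mid y\rangle_{\partial K}=\langle Q(x)\mid y\rangle_{Y(\partial K)^{*}\times Y(\partial K)}=0$ for all $x\in X(\partial K)$, and half‑degeneracy forces $y=0$, a contradiction. Hence $Q$ is surjective.

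The argument is routine linear algebra once this reformulation is in place; the only thing needing care is the bookkeeping of the two pairings — the abstract evaluation pairing on $Y(\partial K)^{*}\times Y(\partial K)$ versus the concrete bilinear product $\langle\cdot\mid\cdot\rangle_{\partial K}$ assembled from the integrals over the boundaries $\partial f$ in \eqref{eq:boudual} — together with the use of finite‑dimensionality (available here since all the spaces are polynomial differential form spaces) to pass from ``proper subspace'' to ``nonzero annihilator''. No feature specific to differential forms, nor the hypothesis $\dim X(\partial K)\ge\dim Y(\partial K)$, is needed beyond finite‑dimensionality.
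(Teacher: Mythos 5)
Your argument is correct and is essentially the paper's own proof: the forward direction uses non-degeneracy of the canonical pairing on $\operatorname{im}(Q)=Y(\partial K)^{\ast}$, and the reverse direction argues by contradiction that a proper range of $Q$ would admit a nonzero annihilating $y_0\in Y(\partial K)$, violating half-degeneracy via \eqref{eq:operatordual1}. Your explicit remark that only finite-dimensionality (and not the assumption $\dim X(\partial K)\ge\dim Y(\partial K)$) is needed is accurate and consistent with the paper's argument.
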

\begin{proof}
Assume that $Q$ is surjective and let $y_0 \in Y(\partial K)$ be such that $\langle x \mid y_0 \rangle_{\partial K} = 0, \quad \forall x \in X(\partial K)$. This gives that $\langle y^{\ast} \mid y_0\rangle_{{Y(\partial K)}^{\ast} \times Y(\partial K)}=0, \ \forall y^{\ast} \in {Y(\partial K)}^{\ast}$, which implies $y_0=0$.\\
Now we prove the other implication. Assume that $Q$ is not surjective. Since the range of $Q$ is a linear subspace of ${Y(\partial K)}^{\ast}$ and $Y(\partial K)$ has the dual ${Y(\partial K)}^{\ast}$, there exists a non-zero $y_0 \in Y(\partial K)$ such that $\langle Q(x)\mid y_0\rangle_{{Y(\partial K)}^{\ast} \times Y(\partial K)}=0, \forall x \in X(\partial K)$. By \eqref{eq:operatordual1} we see that $\langle x \mid y_0\rangle_{\partial K} =0, \forall x \in X(\partial K)$ for a nonzero $y_0$, providing the contradiction.
\end{proof} 
As the operator $Q$ converts the bilinear product on $X(\partial K) \times Y(\partial K)$ into a duality product on ${Y(\partial K)}^{\ast} \times Y(\partial K)$, we can define dual operators with respect to the bilinear form $\langle \cdot \mid \cdot \rangle_{\partial K}$. Let $S$ be a linear operator, that maps the space $X(K)$ to the space $X(\partial K)$, i.e., $S : X(K) \rightarrow X(\partial K)$. So for $x = S(x_h)$, with $x_h \in X(K)$, and $y \in Y(\partial K)$, we have 
\begin{equation}
\begin{aligned}
\langle x \mid y \rangle_{\partial K} = \langle S(x_h) \mid y \rangle_{\partial K}.
\end{aligned}
\end{equation}
Using the operator $Q$, \eqref{eq:operatordual1},
\begin{equation}\label{eq:conjugatep}
\begin{aligned}
\langle S(x_h) \mid y \rangle_{\partial K} &=\langle QS(x_h) \mid y \rangle_{{Y(\partial K)}^{\ast} \times Y(\partial K)}\\
&=\langle x_h \mid (QS)^{\ast}(y)  \rangle_{X(K) \times {X(K)}^{\ast}}.
\end{aligned}
\end{equation}
Here, $(QS)^{\ast} : Y(\partial K) \rightarrow {X(K)}^{\ast}$, denotes the (standard) dual of $QS$.\\
Similarly, let $T$ be an operator that maps the space $Y(K)$ to the space $Y(\partial K)$, i.e.,\\
$T : Y(K) \rightarrow Y(\partial K)$. So for $y = T(y_h)$, with $y_h \in Y(K)$, and $x \in X(\partial K)$, we have
\begin{equation}\label{eq:conjugateq}
\begin{aligned}
\langle x \mid y \rangle_{\partial K} = \langle x  \mid T(y_h) \rangle_{\partial K}.
\end{aligned}
\end{equation}
Using the operator $Q$, \eqref{eq:operatordual1},
\begin{equation}
\begin{aligned}
\langle x  \mid T(y_h) \rangle_{\partial K} &= \langle Q(x) \mid T(y_h) \rangle_{{Y(\partial K)}^{\ast} \times Y(\partial K)}\\
&=\langle T^{\ast}Q(x) \mid y_h \rangle_{{Y(K)}^{\ast} \times Y(K)}.
\end{aligned}
\end{equation}
Here, $(T^{\ast}Q) : X(\partial K) \rightarrow {Y(K)}^{\ast}$.
\subsubsection{Duality on the boundaries of $K$}
We assume that $\mathrm{\dim}(E_q(\partial K))\ge\mathrm{\dim}(E_p(\partial K))$. Note that this is just an assumption it can very well happen that $\mathrm{\dim}(E_q(\partial K))\le\mathrm{\dim}(E_p(\partial K))$, but the analysis is similar for this case.\\
On $E_q(\partial K) \times E_p(\partial K)$ we have the bilinear product \eqref{eq:boudual}. Let $Q : E_q(\partial K) \rightarrow {E_p(\partial K)}^{\ast}$ be the mapping associated to this product, see \eqref{eq:operatordual} and \eqref{eq:operatordual1}.\\
The trace operator $\mathrm{tr}_p$ is the map $\mathrm{tr}_p : E_p(K)\rightarrow E_p(\partial K)$ and the trace operator $\mathrm{tr}_q$ is the map $\mathrm{tr}_q : E_q(K)\rightarrow E_q(\partial K)$. For port variables $e^p_h \in E_p(K)$ and $e^q_h \in E_q(K)$, we have \begin{equation}\label{eq:boudual1}
\begin{aligned}
\langle \mathrm{tr}_qe^q_h \mid \mathrm{tr}_pe^p_h\rangle_{\partial K} &= \langle Q(\mathrm{tr}_qe^q_h) \mid \mathrm{tr}_pe^p_h\rangle_{{E_p(\partial K)}^{\ast} \times E_p(\partial K)}\\
&= \langle \mathrm{tr}^{\ast}_p(Q(\mathrm{tr}_qe^q_h)) \mid e^p_h\rangle_{{E_p(K)}^{\ast} \times E_p(K)}.
\end{aligned}
\end{equation}
Here, $ (\mathrm{tr}^{\ast}_p \circ Q) : E_q(\partial K) \rightarrow E_p(K)^{\ast}$.\\
Similarly, 
\begin{equation}\label{eq:boudual2}
\begin{aligned}
\langle \mathrm{tr}_qe^q_h \mid \mathrm{tr}_pe^p_h\rangle_{\partial K} &= \langle Q(\mathrm{tr}_qe^q_h) \mid \mathrm{tr}_pe^p_h\rangle_{{E_p(\partial K)}^{\ast} \times E_p(\partial K)}\\
&= \langle e^q_h \mid (Q\mathrm{tr}_q)^{\ast} (\mathrm{tr}_pe^p_h)\rangle_{E_q(K) \times {E_q(K)}^{\ast}}.
\end{aligned}
\end{equation}
Here, $ (Q \circ \mathrm{tr}_q)^{\ast} : E_p(\partial K) \rightarrow E_q(K)^{\ast}$.
\par
For $\mathrm{tr}_pe^p_h \in E_p(\partial K)$ and $\mathrm{tr}_qe^q_h \in E_q(\partial K)$ with $\mathrm{\dim}(E_q(\partial K))\ge\mathrm{\dim}(E_p(\partial K))$ the bilinear product $\langle \mathrm{tr}_qe^q_h \mid \mathrm{tr}_pe^p_h\rangle_{\partial K}$ is half-degenerate. So, using Lemma \ref{lem:operatorq} the operator $Q$ is surjective.
\begin{lemma}
For $e^p_h, \nu^p_h \in E_p(K), \nu^p_{h,f} \in E_p(f)$ and $e^q_h ,\nu^q_h \in E_q(K)$ with $p +q = n +1$, the integration by parts rule \eqref{eq:intp1} for an element $K$ gives
 \begin{equation}
\label{eq:int1}
\langle  d_{pq} e^q_h \mid \nu^p_h \rangle_K =(-1)^p\langle e^q_h\mid d_{qp}\nu^p_h\rangle_K + \langle \mathrm{tr}_qe^q_h \mid \mathrm{tr}_p\nu^p_h \rangle _{\partial K}.
\end{equation}
\end{lemma}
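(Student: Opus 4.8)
The plan is to reduce the element-level identity \eqref{eq:int1} to the integration-by-parts rule \eqref{eq:intp1} (equivalently the smooth version \eqref{eq:intp}, since polynomial forms are smooth) applied separately on each subsimplex $f\in\Delta(K)$, and then to reassemble the pieces using the definitions \eqref{eq:dualpro1}--\eqref{eq:dualpro2} of the duality product on $K$ and \eqref{eq:boudual} of the bilinear product on $\partial K$. Every $f\in\Delta(K)$ is an oriented $\dim(f)$-dimensional simplex with Lipschitz boundary $\partial f$, so \eqref{eq:intp} applies on it, and since the forms involved are polynomial the technical trace hypotheses needed for \eqref{eq:intp1} are automatic.

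First I would expand the left-hand side. Since $d_{pq}e^q_h\in F_p(K)$ and $\nu^p_h\in E_p(K)=F_p(K)^{*}$, the duality product \eqref{eq:dualpro1} in Case 1 (respectively \eqref{eq:dualpro2} in Case 2) gives
\begin{equation*}
\langle d_{pq}e^q_h\mid \nu^p_h\rangle_K=\sum_{f\in\Delta(K),\ \dim(f)\in[p,\ p+r-1]}\int_f \mathrm{tr}_{K,f}(d_{pq}e^q_h)\wedge\nu^p_{h,f}.
\end{equation*}
Because the tangential trace is a pullback, it commutes with the exterior derivative, so on each such $f$ one has $\mathrm{tr}_{K,f}(d_{pq}e^q_h)=d\,(\mathrm{tr}_{K,f}e^q_h)$, where the second $d$ is the exterior derivative on the manifold $f$ and $\mathrm{tr}_{K,f}e^q_h$ is the restricted $(n-q)$-form. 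Using $p+q=n+1$, this form has degree $k:=n-q=p-1$ while $\nu^p_{h,f}$ has degree $\dim(f)-p=\dim(f)-k-1$, so the pair $(\mathrm{tr}_{K,f}e^q_h,\nu^p_{h,f})$ fits exactly into the hypotheses of \eqref{eq:intp} on $f$.

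Applying \eqref{eq:intp} on each $f$ yields
\begin{equation*}
\int_f d(\mathrm{tr}_{K,f}e^q_h)\wedge\nu^p_{h,f}=\int_{\partial f}\mathrm{tr}_{f,\partial f}(\mathrm{tr}_{K,f}e^q_h)\wedge\mathrm{tr}_{f,\partial f}\nu^p_{h,f}+(-1)^{k-1}\int_f \mathrm{tr}_{K,f}e^q_h\wedge d\nu^p_{h,f},
\end{equation*}
and I would note that $(-1)^{k-1}=(-1)^{p-2}=(-1)^{p}$. Summing over all $f\in\Delta(K)$ with $\dim(f)\in[p,\ p+r-1]$, the boundary terms assemble---via the functoriality $\mathrm{tr}_{f,\partial f}\circ\mathrm{tr}_{K,f}=\mathrm{tr}_{K,\partial f}$ and the definition \eqref{eq:boudual}---into $\langle \mathrm{tr}_q e^q_h\mid \mathrm{tr}_p\nu^p_h\rangle_{\partial K}$, while the volume terms assemble into $(-1)^{p}\langle e^q_h\mid d_{qp}\nu^p_h\rangle_K$, since $d\nu^p_{h,f}=(d_{qp}\nu^p_h)_f$. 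This produces \eqref{eq:int1}; Case 2 is handled identically once the index ranges in \eqref{eq:dualpro2} are used.

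The sign bookkeeping and the degree/dimension verifications are routine. The step I expect to be the main obstacle is matching the two summation ranges: the expansion of the left-hand side runs over $f$ with $\dim(f)\in[p,\ p+r-1]$, whereas $\langle e^q_h\mid d_{qp}\nu^p_h\rangle_K$ is, by the dual of $E_q(K)$, a sum over $[n-q,\ n-q+r]$ in Case 1 and $[n-q,\ n-q+r-1]$ in Case 2. These must be shown to contribute identically once one observes that $\nu^p_h$, hence $d_{qp}\nu^p_h$, has no component on subsimplices with $\dim(f)<p$---removing the discrepancy at the lower end---and that, in Case 2, $d\nu^p_{h,f}$ vanishes on the highest-dimensional subsimplices where the polynomial degree would become negative---removing it at the upper end; after this reduction both sums run over the same set of $f$ with identical integrands.
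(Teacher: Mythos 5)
Your proposal is correct and follows essentially the same route as the paper's proof: expand $\langle d_{pq}e^q_h\mid\nu^p_h\rangle_K$ via the subsimplex-wise duality product \eqref{eq:dualpro1}--\eqref{eq:dualpro2}, commute the trace with the exterior derivative, apply the integration-by-parts rule on each $f\in\Delta(K)$ with the sign $(-1)^{k-1}=(-1)^p$ for $k=n-q=p-1$, and reassemble the volume and boundary sums into $(-1)^p\langle e^q_h\mid d_{qp}\nu^p_h\rangle_K$ and $\langle\mathrm{tr}_qe^q_h\mid\mathrm{tr}_p\nu^p_h\rangle_{\partial K}$ via \eqref{eq:boudual}. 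Your closing remark on matching the summation ranges (the missing $\dim(f)=p-1$ components of $d_{qp}\nu^p_h$ and the vanishing of $d\nu^p_{h,f}$ at the top dimension in Case 2) is a correct refinement of a step the paper passes over silently.
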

\begin{proof}
For $e^q_h \in E_q(K), \nu^p_h \in E_p(K)$, using the exterior derivative operator $d_{pq}(e^q_h(K)) \in F_p(K)$ as the exterior derivative operator operator $d_{pq}$ is defined as the map $d_{pq} : E_q(K) \rightarrow F_p(K)$. Using the duality product between $E_p(K)$ and $F_p(K)$, \eqref{eq:dualpro1} or \eqref{eq:dualpro2}, we write 
\begin{equation}
\begin{aligned}
\langle d_{pq} e^q_h \mid \nu^p_h \rangle_K &= \underset{f\in\Delta(K),p\leq\mathrm{\dim}(f)\leq r+p-1}\sum\int_f  \mathrm{tr}_{K,f} (de^q_h) \wedge \nu^p_{h,f},
\end{aligned}
\end{equation}
where $\nu^p_h$ can be represented as $\nu^p_h = (\nu^p_{h,f_1}, \cdots, \nu^p_{h,f_m}), f_1, \cdots, f_m \in \Delta(K)$, as explained in \eqref{eq:vectorformulation}.\\
As the exterior derivative operator $d$ and the trace operator $\mathrm{tr}$ are commutative, we can simplify the above as 
\begin{equation}
\begin{aligned}
\langle    d_{pq} e^q_h \mid \nu^p_h\rangle_K &= \underset{f\in\Delta(K),p\leq\mathrm{\dim}(f)\leq r+p-1}\sum\int_f  d(\mathrm{tr}_{K,f}e^q_h) \wedge \nu^p_{h,f}.
\end{aligned}
\end{equation}
Using \eqref{eq:intp1} and $E_q(K) \subset \Lambda^{n-k}(K)$ in the first step and then \eqref{eq:dualpro1}, \eqref{eq:boudual} we obtain
\begin{equation}
\begin{aligned}
\langle  d_{pq} e^q_h \mid \nu^p_h \rangle_K &= \underset{f\in\Delta(K),p\leq\mathrm{\dim}(f)\leq r+p-1}\sum\Bigg((-1)^p\Big(\int_f  \mathrm{tr}_{K,f} e^q_h\wedge d\nu^p_{h,f}\Big)\\
& \ \ \ \ \ \ \ \ \ \ \ \ \ \ \ \ \ \ \ \ \ \ \ \ \ \ \ \ \ \ \ \ \ \ \ \ \ \ \ \ \ \ \ \ \ \  + \int_{\partial f} \mathrm{tr}_{K,\partial f} e^q_h\wedge \mathrm{tr}_{f,\partial f}\nu^p_{h,f}\Bigg)\\
&=(-1)^p\langle e^q_h\mid d_{qp}\nu^p_h\rangle_K + \langle \mathrm{tr}_qe^q_h \mid \mathrm{tr}_p\nu^p_h \rangle _{\partial K}.
\end{aligned}
\end{equation}
\end{proof}
Using the duality pairings \eqref{eq:dualpro1}, \eqref{eq:dualpro2} and \eqref{eq:boudual}, we define a symmetric bilinear product for discrete port variables.
\begin{definition}[Extended bilinear form]
Let $K$ be an element of $\mathcal{T}_h$. For $\{f^p_h,f'^p_h\}\in F_p(K)$, 
$\{f^q_h,f'^q_h\}  \in F_q(K)$, $\{e^p_h,e'^p_h\} \in E_p(K)$,\ $\{e^q_h,e'^q_h\} \in E_q(K)$, the symmetric extended bilinear form is defined as 
\begin{equation}\label{eq:dispower}
\begin{aligned}
&\langle \langle (f^p_h,f^q_h,e^p_h,e^q_h), (f'^p_h,f'^q_h,e'^p_h,e'^q_h)\rangle\rangle_h \\ 
&=  \langle e'^p_h \mid f^p_h\rangle_K + \langle e^p_h \mid f'^p_h \rangle_K +\langle e'^q_h \mid f^q_h\rangle_K  + \langle e^q_h \mid f'^q_h \rangle_K \\
&\ \ \ \ \ \ + \langle \mathrm{tr}e'^q_h \mid \mathrm{tr}e^p_h\rangle_{\partial K} +\langle \mathrm{tr}e^q_h \mid \mathrm{tr}e'^p_h\rangle_{\partial K} + \langle \mathrm{tr}e'^p_h \mid \mathrm{tr}e^q_h\rangle_{\partial K} +\langle \mathrm{tr}e^p_h \mid \mathrm{tr}e'^q_h\rangle_{\partial K}.
\end{aligned}
\end{equation}
\end{definition}
\subsection{Discontinuous Finite Element Dirac structure}\label{sec:dfed}
In the discontinuous finite element discretization we consider elements to be independent from each other. The interaction occurs only through common boundaries. We define now a Dirac structure, called generalized Stokes-Dirac structure that is suitable for a discontinuous finite element discretization. We also state the interconnection between adjacent elements in the spatial domain using a Dirac structure at the element boundaries, called interconnection Dirac structure. This will provide the mathematical framework for defining port-Hamiltonian discontinuous Galerkin discretizations.
\subsubsection{The generalized Stokes-Dirac structure}
The polynomial differential form spaces on which the port-variables are projected are denoted as 
\begin{equation}\label{eq:polydifspace}
\begin{aligned}
F_{p,q} &:= F_p(K) \times F_q(K) \times E_p(\partial K) \times E_q(\partial K),\\
E_{p,q} &:= E_p(K) \times E_q(K) \times E_q(\partial K) \times E_p(\partial K).
\end{aligned}
\end{equation}
For port variables $e^p_h \in E_p(K), e^q_h \in E_q(K), f^p_h \in F_p(K), f^q_h \in F_q(K)$ and the input and output port boundary pairs  $\{y^p_h,u^p_h\} \in E_p(\partial K), \{y^q_h,u^q_h\} \in E_q(\partial K)$, we have the following bilinear product 
\begin{equation}\label{eq:power1}
\begin{aligned}
&\langle e^p_h \mid f^p_h \rangle_K + \langle e^q_h \mid f^q_h \rangle_K + \langle u^q_h \mid y^p_h \rangle_{\partial K} + \langle   y^q_h \mid  u^p_h \rangle _{\partial K}.
\end{aligned}
\end{equation}
However since, $\mathrm{\dim}(E_p(\partial K)) \neq \mathrm{\dim}(E_q(\partial K))$, the bilinear product \eqref{eq:power1} is degenerated. As explained in subsection 4.3.4 we use the linear (surjective) operator $Q$, \eqref{eq:operatordual1} to transform this bilinear product into a non-degenerate duality product. Recall that we assume $\mathrm{\dim}(E_q(\partial K))\ge\mathrm{\dim}(E_p(\partial K))$. Using the abstract operator $Q$, \eqref{eq:operatordual1}, we can map the boundary port variable space $E_q(\partial K)$ to the dual of $E_p(\partial K)$, while maintaining the bilinear product. That is, for each $u^q_h \in E_q(\partial K)$, we have an element ${u^p_h}^{\ast} \in {E_p(\partial K)}^{\ast}$, such that $Q(u^q_h) = {u^p_h}^{\ast}$ and $\langle u^q_h \mid y^p_h \rangle_{\partial K} = \langle Q(u^q_h) \mid y^p_h \rangle_{{E_p(\partial K)}^{\ast}\times E_p(\partial K)}$. Using this we can rewrite the bilinear product \eqref{eq:power1} as 
 \begin{equation}\label{eq:power11}
\begin{aligned}
&\langle e^p_h \mid f^p_h \rangle_K + \langle e^q_h \mid f^q_h \rangle_K + \langle  u^q_h \mid y^p_h \rangle_{\partial K} + \langle   y^q_h \mid  u^p_h \rangle _{\partial K}\\
&= \langle e^p_h \mid f^p_h \rangle_K + \langle e^q_h \mid f^q_h \rangle_K + \langle Q(u^q_h) \mid y^p_h \rangle_{{E_p(\partial K)}^{\ast} \times E_p(\partial K)}\\
&\ \ \ \ + \langle   Q(y^q_h)  \mid  u^p_h \rangle _{{E_p(\partial K)}^{\ast} \times E_p(\partial K)}\\
&= \langle e^p_h \mid f^p_h \rangle_K + \langle e^q_h \mid f^q_h \rangle_K + \langle {u^p_h}^{\ast} \mid y^p_h \rangle_{{E_p(\partial K)}^{\ast} \times E_p(\partial K)}\\
&\ \ \ \ + \langle   {y^p_h}^{\ast} \mid  u^p_h \rangle _{{E_p(\partial K)}^{\ast} \times E_p(\partial K)}.
\end{aligned}
\end{equation}
We replace the polynomial differential form spaces, \eqref{eq:polydifspace}, with the (abstract) spaces
\begin{equation}
\begin{aligned}
F'_{p,q} &:= F_p(K) \times F_q(K) \times E_p(\partial K) \times {E_p(\partial K)}^{\ast},\\
E'_{p,q} &:= E_p(K) \times E_q(K) \times {E_p(\partial K)}^{\ast} \times E_p(\partial K).
\end{aligned}
\end{equation} 
\begin{definition}
For the port variables $e^p_h \in E_p(K), e^q_h \in E_q(K), f^p_h \in F_p(K), f^q_h \in F_q(K)$, the input and output port boundary pairs  $\{y^p_h,u^p_h\} \in E_p(\partial K), \ \{{y^p_h}^{\ast},{u^p_h}^{\ast}\} \in {E_p(\partial K)}^{\ast}$, we have the following duality product on $F'_{p,q} \times E'_{p,q}$ 
 \begin{equation}\label{eq:power12}
\begin{aligned}
&\langle e^p_h \mid f^p_h \rangle_K + \langle e^q_h \mid f^q_h \rangle_K + \langle {y^p_h}^{\ast} \mid u^p_h \rangle_{{E_p(\partial K)}^{\ast} \times E_p(\partial K)}+\langle   {u^p_h}^{\ast} \mid  y^p_h \rangle _{{E_p(\partial K)}^{\ast} \times E_p(\partial K)}.
\end{aligned}
\end{equation}
\end{definition}
Using the duality product \eqref{eq:power12} as the power we can now define the generalized Stokes-Dirac structure.
\begin{definition}[Generalized Stokes-Dirac Structure]\label{def:DGDS}
For $K \in \mathcal{T}_h$, we define the subspace $D_h$ of $F'_{p,q} \times E'_{p,q}$ as
\begin{equation}\label{eq:disdirac}
\begin{aligned}
&D_h = \Bigg\{(f^p_h, f^q_h, y^p_h, {y^p_h}^{\ast}, e^p_h, e^q_h, {u^p_h}^{\ast}, u^p_h) \in F'_{p,q} \times E'_{p,q} \mid \\
 &   \left[ \begin{array}{c} f^p_h \\ f^q_h \end{array}\right]=
  \left[ \begin{array}{cc} 0& (-1)^{r_1}d_{pq} + (-1)^{r_1+q} \frac{1}{2}  \mathrm{tr}^{\ast}_p Q(\mathrm{tr}_q)   \\ d_{qp} + (-1)^p\frac{1}{2}(Q\mathrm{tr}_q)^{\ast} \mathrm{tr}_p & 0 \end{array}\right]\left[ \begin{array}{c} e^p_h \\ e^q_h\end{array}\right]\\
  & \ \ \ \ \ \ \ \ \ \ \ +  \left[ \begin{array}{cc} 0 & (-1)^{r_1+q}\mathrm{tr}^{\ast}_p  \\ (-1)^p(Q\mathrm{tr}_q)^{\ast} & 0 \end{array}\right]\left[ \begin{array}{c} u^{p}_h\\ {u^p_h}^{\ast} \end{array}\right],\\
 &\left[ \begin{array}{c}  {y^p_h}^{\ast} \\ y^p_h \end{array}\right]  = \left[ \begin{array}{cc} 0 & -Q(\mathrm{tr}_q) \\ -(-1)^p\mathrm{tr}_p & 0 \end{array}\right] \left[ \begin{array}{c} e^p_h  \\ e^q_h \end{array}\right]\Bigg\},
 \end{aligned}
\end{equation}
where $p+q = n+1, r_1 = pq+1 $, $d_{pq}$ and $d_{qp}$ as defined in \eqref{eq:derivative1} and \eqref{eq:derivative2}, the trace operation defined as in \eqref{eq:boundspace} and the operator $Q$ defined as in \eqref{eq:operatordual}, \eqref{eq:operatordual1}.
\end{definition}
We now prove that the generalized Stokes-Dirac structure given in the definition above is indeed a Dirac structure. For that we use the following lemma. Although the statement in the following lemma is well known, it is difficult to find the proof, so we state the proof as well.
\begin{lemma}\label{lem:Dirac} 
Consider the finite dimensional linear spaces $F_h$ and $E_h$, having a non-degenerate bilinear product defined on $F_h \times E_h$. A subspace $D_h \in F_h \times E_h$ is a Dirac structure if and only if the power is conserved and $\mathrm{\dim}(D_h) = \mathrm{\dim}(F_h)$.
\end{lemma}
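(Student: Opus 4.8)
The plan is to translate the two defining ingredients of a Dirac structure, namely $D_h = D_h^{\perp}$ with respect to the symmetrized pairing, into the pair of conditions in the statement, using nothing beyond finite-dimensional linear algebra. The first step I would take is to record that the symmetrized pairing $\langle\langle\,\cdot\,,\,\cdot\,\rangle\rangle$ of \eqref{eq:bilinearpair} is itself a non-degenerate symmetric bilinear form on the space $F_h \times E_h$, whose dimension is $2\,\mathrm{\dim}(F_h)$: if $(f_1,e_1)$ pairs to zero with every $(f_2,e_2)$, then setting $f_2 = 0$ forces $\langle e_2 \mid f_1\rangle = 0$ for all $e_2 \in E_h$, hence $f_1 = 0$ by non-degeneracy of $\langle\,\cdot\mid\cdot\,\rangle$ in its first argument, and symmetrically setting $e_2 = 0$ forces $e_1 = 0$. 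Consequently the standard codimension identity $\mathrm{\dim}(W) + \mathrm{\dim}(W^{\perp}) = 2\,\mathrm{\dim}(F_h)$ is available for every subspace $W \subseteq F_h \times E_h$, with $\perp$ taken throughout with respect to $\langle\langle\,\cdot\,,\,\cdot\,\rangle\rangle$.

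For the forward implication, assume $D_h = D_h^{\perp}$. Then every $(f,e) \in D_h$ is orthogonal to itself, so $2\langle e \mid f\rangle = \langle\langle (f,e),(f,e)\rangle\rangle = 0$, which is power conservation. Applying the codimension identity with $W = D_h$ and using $\mathrm{\dim}(D_h^{\perp}) = \mathrm{\dim}(D_h)$ gives $2\,\mathrm{\dim}(D_h) = 2\,\mathrm{\dim}(F_h)$, i.e. $\mathrm{\dim}(D_h) = \mathrm{\dim}(F_h)$.

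For the converse, assume power is conserved on $D_h$ and $\mathrm{\dim}(D_h) = \mathrm{\dim}(F_h)$. The key move is polarization inside the subspace: for $(f_1,e_1),(f_2,e_2) \in D_h$ the sum $(f_1+f_2, e_1+e_2)$ again lies in $D_h$, so $\langle e_1 + e_2 \mid f_1 + f_2\rangle = 0$; expanding and cancelling the two self-pairings, which vanish by hypothesis, leaves $\langle e_1 \mid f_2\rangle + \langle e_2 \mid f_1\rangle = 0$, that is $D_h \subseteq D_h^{\perp}$. The codimension identity then gives $\mathrm{\dim}(D_h^{\perp}) = 2\,\mathrm{\dim}(F_h) - \mathrm{\dim}(D_h) = \mathrm{\dim}(F_h) = \mathrm{\dim}(D_h)$, and a subspace contained in a subspace of equal finite dimension must coincide with it, so $D_h = D_h^{\perp}$ and $D_h$ is a Dirac structure.

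I do not anticipate a genuine obstacle here; the statement is, as remarked, well known. The only point deserving explicit care is the very first one, verifying that $\langle\langle\,\cdot\,,\,\cdot\,\rangle\rangle$ is non-degenerate on the full product space so that the codimension formula for $\perp$ may be invoked, since the non-degeneracy hypothesis in the lemma is stated only for the original pairing $\langle\,\cdot\mid\cdot\,\rangle$ between $F_h$ and $E_h$. Everything after that reduces to bookkeeping with dimensions.
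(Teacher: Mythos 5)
Your proposal is correct and follows essentially the same route as the paper's proof: power conservation yields $D_h \subseteq D_h^{\perp}$, and a dimension count with respect to the symmetrized pairing (the paper phrases this as counting $n$ independent linear equations in a $2n$-dimensional space, which is exactly your codimension identity $\dim W + \dim W^{\perp} = 2\dim(F_h)$) forces $D_h = D_h^{\perp}$, with the converse handled the same way. Your explicit polarization step and your verification that $\langle\langle\,\cdot\,,\,\cdot\,\rangle\rangle$ is non-degenerate on $F_h \times E_h$ merely make precise details the paper leaves implicit.
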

\begin{proof} Given, $\langle e \mid f\rangle=0, \forall (f,e) \in D_h$ and $\mathrm{\dim}(D_h)=\mathrm{\dim}(F_h)$, we have to prove that $D_h$ is a Dirac structure, i.e., $D_h =(D_h)^{\perp}$. And, conversely if $D_h$ is a Dirac structure, i.e., $D_h ={D_h}^{\perp}$, we have to prove that $\mathrm{\dim}(D_h)=\mathrm{\dim}(F_h)$.\\
Suppose $\forall (f,e) \in D_h$, we have 
\begin{equation}
\begin{aligned}
\langle\langle (f,e),(f,e)\rangle\rangle_h = \langle e  \mid f\rangle +\langle e\mid f \rangle= 2\langle e\mid f\rangle = 0,
\end{aligned}
\end{equation}
then using \eqref{eq:power} $D_h \subseteq  (D_h)^{\perp}$. Let $n$ be the dimension of $F_h$ and let $\{[f_1,e_1]^T, . . . ,$\\
$[f_n,e_n]^T\}$ be a basis of $D_h$. For $[\tilde{f},\tilde{e}]^T \in D^{\perp}_h$ it implies
\begin{equation}\label{eq:dproduct}
\begin{aligned}
\langle\langle (f,e),(\tilde{f},\tilde{e})\rangle\rangle_h = \langle e_k  \mid \tilde{f}\rangle +\langle \tilde{e} \mid f_k \rangle =0, \qquad k=1,\cdots, n.
\end{aligned}
\end{equation}
This gives $n$ independent linear equations to be solved in a $2n$ dimensional space. This implies that the solution set is of $2n-n=n$ dimensions. So, $\mathrm{\dim}(D^{\perp}_h) =n$.
Since, $\mathrm{\dim}(D_h)=\mathrm{\dim}(E_h)=\mathrm{\dim}(F_h)=n$ and $D_h \subseteq (D_h)^{\perp}$, we have $ D_h= (D_h)^{\perp}$.\\
Conversely, if $D_h$ is a Dirac structure, then $D_h = (D_h)^{\perp}$, and \eqref{eq:power} implies in particular that
\begin{equation}
\begin{aligned}
 \langle e\mid f\rangle = 0, \quad \forall (f,e)\in D_h.
\end{aligned}
\end{equation}
Let $\{[f_1, e_1]^T, . . . , [f_m, e_m]^T\}$ be a basis of $D_h$, for $[\tilde{f},\tilde{e}]^T \in (D_h)^{\perp}$ it implies
\begin{equation}\label{eq:dproduct1}
\begin{aligned}
\langle e_k \mid \tilde{f}\rangle +\langle \tilde{e} \mid f_k\rangle =0, \qquad k=1,\cdots, m.
\end{aligned}
\end{equation}
This gives $m$ independent linear equations to be solved in a $2n$ dimensional space. This implies that the solution set is of $2n-m$ dimensions. So, $\mathrm{\dim}(D^{\perp}) =2n-m$. Since, $D_h = (D_h)^{\perp}$, we have $2n-m=n$, or $m=n$. This proves that $\mathrm{\dim}(D_h)^{\perp}=\mathrm{\dim}(D_h)=\mathrm{\dim}(F_h)=n$.
\end{proof}
Lemma \ref{lem:Dirac} states the conditions for a linear subspace to be a Dirac structure. Using this lemma we prove now that the subspace \eqref{eq:disdirac} is a Dirac structure. Along with Lemma \ref{lem:Dirac}, we state two more lemmas, which will be used to prove that the structure \eqref{eq:disdirac} is a Dirac structure.
\begin{lemma}\label{lem:intbyparts}
For port variables $e^p_h \in E_p(K), e^q_h \in E_q(K), f^p_h \in F_p(K), f^q_h \in F_q(K)$, input and output port boundary pairs  $\{y^p_h,u^p_h\} \in E_p(\partial K), \ \{{y^p_h}^{\ast},{u^p_h}^{\ast}\} \in {E_p(\partial K)}^{\ast}$ the following holds. If
\begin{equation}
\begin{aligned}
f^p_h &= ((-1)^{r_1}d_{pq} + (-1)^{r_1+q}\frac{1}{2} \mathrm{tr}_p^{\ast}Q(\mathrm{tr_q})) e^q_h +  (-1)^{r_1+q}\mathrm{tr}_p^{\ast} {u^p_h}^{\ast},\\
f^q_h &=(d_{qp} +(-1)^p \frac{1}{2}(Q\mathrm{tr}_q)^{\ast}\mathrm{tr}_p)e^p_h + (-1)^p(Q\mathrm{tr}_q)^{\ast} u^p_h,
\end{aligned}
\end{equation}
then for $\nu^p_h \in E_p(K)$, $\nu^q_h \in E_q(K)$,
\begin{equation}\label{eq:newlemma1}
\begin{aligned}
\langle \nu^p_h \mid f^p_h \rangle_K &= - \langle e^q_h \mid d_{qp} \nu^p_h\rangle_K + (-1)^{p+1} \frac{1}{2}  \langle  Q(\mathrm{tr}_qe^q_h)\mid\mathrm{tr}_p\nu^p_h\rangle_{{E_p(\partial K)}^{\ast} \times E_p(\partial K)}\\
&\ \ \ \ \ +(-1)^{p} \langle {u^p_h}^{\ast} \mid\mathrm{tr}_p\nu^p_h\rangle_{{E_p(\partial K)}^{\ast} \times E_p(\partial K)}\\
\langle \nu^q_h \mid f^q_h \rangle_K &= (-1)^p\langle d_{pq}\nu^q_h \mid e^p_h \rangle_K +(-1)^{p+1} \frac{1}{2} \langle Q(\mathrm{tr}_q\nu^q_h)\mid\mathrm{tr}_pe^p_h\rangle_{{E_p(\partial K)}^{\ast} \times E_p(\partial K)} \\
&\ \ \ \ \ +(-1)^p \langle Q\mathrm{tr}_q(\nu^q_h) \mid u^p_h\rangle_{{E_p(\partial K)}^{\ast} \times E_p(\partial K)}
\end{aligned}
\end{equation}
with $p+q=n+1$ and $r_1=pq+1$.
\end{lemma}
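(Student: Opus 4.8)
The plan is to prove both identities by substituting the given formulas for $f^p_h$ and $f^q_h$ and then integrating by parts once on the element $K$. I will describe the argument for the first identity only; the second is obtained from it verbatim after interchanging the roles of $p$ and $q$ and of the spaces $E_p,F_p$ with $E_q,F_q$ (so $d_{pq}\leftrightarrow d_{qp}$ and $\mathrm{tr}_p^{\ast}\leftrightarrow(Q\mathrm{tr}_q)^{\ast}$). First I would expand $\langle\nu^p_h\mid f^p_h\rangle_K$ by bilinearity into three pieces, coming respectively from the terms $(-1)^{r_1}d_{pq}e^q_h$, $(-1)^{r_1+q}\tfrac{1}{2}\mathrm{tr}_p^{\ast}Q(\mathrm{tr}_qe^q_h)$ and $(-1)^{r_1+q}\mathrm{tr}_p^{\ast}{u^p_h}^{\ast}$ of $f^p_h$.

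For the first piece I would use the commutation relation between the two orderings of the duality product on $K$ (the factor $(-1)^{p(n-p)}$ underlying \eqref{eq:duality13}, cf.\ \eqref{eq:twice}) to write $\langle\nu^p_h\mid d_{pq}e^q_h\rangle_K=(-1)^{p(n-p)}\langle d_{pq}e^q_h\mid\nu^p_h\rangle_K$, and then apply the elementwise integration-by-parts rule \eqref{eq:int1}, giving $(-1)^{p(n-p)}\bigl((-1)^p\langle e^q_h\mid d_{qp}\nu^p_h\rangle_K+\langle\mathrm{tr}_qe^q_h\mid\mathrm{tr}_p\nu^p_h\rangle_{\partial K}\bigr)$. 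Multiplying by $(-1)^{r_1}$ and using $r_1=pq+1$ together with $n=p+q-1$, the exponent $r_1+p(n-p)$ reduces mod $2$ to $p+1$, so this piece equals $-\langle e^q_h\mid d_{qp}\nu^p_h\rangle_K+(-1)^{p+1}\langle\mathrm{tr}_qe^q_h\mid\mathrm{tr}_p\nu^p_h\rangle_{\partial K}$, already producing the volume term of the claim and a full boundary term.

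For the remaining two pieces I would invoke the defining relation \eqref{eq:operatordual1} of the conversion operator $Q$ to replace the bilinear product $\langle\cdot\mid\cdot\rangle_{\partial K}$ by the genuine duality product on ${E_p(\partial K)}^{\ast}\times E_p(\partial K)$, together with the characterisation of the adjoint $\mathrm{tr}_p^{\ast}$ in \eqref{eq:boudual1} (and of $(Q\mathrm{tr}_q)^{\ast}$ in \eqref{eq:boudual2}, needed for the $f^q_h$ identity); this turns the second piece into a scalar multiple of $\langle Q(\mathrm{tr}_qe^q_h)\mid\mathrm{tr}_p\nu^p_h\rangle_{{E_p(\partial K)}^{\ast}\times E_p(\partial K)}$ and the third into a scalar multiple of $\langle{u^p_h}^{\ast}\mid\mathrm{tr}_p\nu^p_h\rangle_{{E_p(\partial K)}^{\ast}\times E_p(\partial K)}$. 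Applying \eqref{eq:operatordual1} once more to the boundary term produced above rewrites it, too, as a multiple of $\langle Q(\mathrm{tr}_qe^q_h)\mid\mathrm{tr}_p\nu^p_h\rangle$; the factor $\tfrac{1}{2}$ in the definition of $f^p_h$ is exactly what makes the $\tfrac{1}{2}$-weighted $\mathrm{tr}_p^{\ast}Q(\mathrm{tr}_q)e^q_h$ contribution carry the opposite sign and half the weight of the integration-by-parts boundary term, so that the two combine to $(-1)^{p+1}\tfrac{1}{2}\langle Q(\mathrm{tr}_qe^q_h)\mid\mathrm{tr}_p\nu^p_h\rangle$. Collecting the three pieces and simplifying all the $(-1)$-exponents with $r_1=pq+1$ and $p+q=n+1$ then gives exactly the stated identity, and the same chain of steps applied to $\langle\nu^q_h\mid f^q_h\rangle_K$ yields the second one.

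The only real difficulty is bookkeeping: one must track the $(-1)$-powers consistently through (i) the reversal of the duality products on $K$ and on $\partial K$, (ii) the sign $(-1)^p$ in \eqref{eq:int1}, and (iii) the passage through $Q$, $\mathrm{tr}_p^{\ast}$ and $(Q\mathrm{tr}_q)^{\ast}$, and then verify that after substituting $r_1=pq+1$ and $n+1=p+q$ everything collapses to the coefficients $-1$, $(-1)^{p+1}\tfrac{1}{2}$, $(-1)^{p}$ in the first identity and $(-1)^p$, $(-1)^{p+1}\tfrac{1}{2}$, $(-1)^p$ in the second. Conceptually the proof is just ``substitute and integrate by parts once on $K$'', with no estimates involved.
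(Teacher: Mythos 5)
Your argument is correct and follows essentially the same route as the paper's proof: expand by bilinearity, convert the $\mathrm{tr}_p^{\ast}Q(\mathrm{tr}_q)$ and $(Q\mathrm{tr}_q)^{\ast}\mathrm{tr}_p$ terms via \eqref{eq:boudual1}--\eqref{eq:boudual2} and \eqref{eq:operatordual1}, apply the elementwise integration-by-parts rule \eqref{eq:int1}, and let the $\tfrac{1}{2}$-weighted trace term combine with the integration-by-parts boundary term, with the same sign bookkeeping through $r_1=pq+1$ and $p+q=n+1$. The only cosmetic differences are the ordering of these steps and that the paper handles the $f^q_h$ identity in parallel by applying \eqref{eq:int1} with the roles of its two arguments exchanged (no order swap on $K$ is needed there), rather than by the literal $p\leftrightarrow q$ relabelling your ``verbatim'' remark suggests.
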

\begin{proof}
For the test differential forms $\nu^p_h \in E_p(K)$ and $\nu^q_h \in E_q(K)$ we have
\begin{equation}\label{eq:neweq1}
\begin{aligned}
\langle \nu^p_h \mid f^p_h \rangle_K &= \langle \nu^p_h \mid ((-1)^{r_1}d_{pq} + (-1)^{r_1+q}\frac{1}{2} \mathrm{tr}_p^{\ast}Q(\mathrm{tr_q})) e^q_h \rangle_K +  (-1)^{r_1+q} \langle  \nu^p_h \mid \mathrm{tr}_p^{\ast} {u^p_h}^{\ast} \rangle_K\\
\langle \nu^q_h \mid f^q_h \rangle_K &= \langle \nu^q_h \mid (d_{qp} + (-1)^p\frac{1}{2}(Q\mathrm{tr}_q)^{\ast}\mathrm{tr}_p)e^p_h \rangle_K + (-1)^p\langle \nu^q_h \mid (Q\mathrm{tr}_q)^{\ast} u^p_h \rangle_K
\end{aligned}
\end{equation}
Using \eqref{eq:boudual1} and \eqref{eq:boudual2}, \eqref{eq:neweq1} can be written as 
\begin{equation}\label{eq:neweq2}
\begin{aligned}
\langle \nu^p_h \mid f^p_h \rangle_K &= (-1)^{r_1} \langle \nu^p_h \mid d_{pq}e^q_h \rangle_K + (-1)^{r_1+q} \frac{1}{2}\langle \mathrm{tr}_p\nu^p_h\mid Q(\mathrm{tr}_qe^q_h)\rangle_{E_p(\partial K) \times {E_p(\partial K)}^{\ast}}\\
&\ \ \ \ \ +  (-1)^{r_1+q}\langle \mathrm{tr}_p\nu^p_h \mid {u^p_h}^{\ast} \rangle_{E_p(\partial K) \times {E_p(\partial K)}^{\ast}}\\
\langle \nu^q_h \mid f^q_h \rangle_K &=\langle \nu^q_h \mid d_{qp}e^p_h \rangle_K +  (-1)^p \frac{1}{2}\langle Q(\mathrm{tr}_q\nu^q_h)\mid\mathrm{tr}_pe^p_h\rangle_{{E_p(\partial K)}^{\ast} \times E_p(\partial K)}\\
&\ \ \ \ + (-1)^p \langle Q\mathrm{tr}_q(\nu^q_h) \mid u^p_h\rangle_{{E_p(\partial K)}^{\ast} \times E_p(\partial K)}.
\end{aligned}
\end{equation} 
Using the relation $p+q=n+1$, we rewrite \eqref{eq:neweq2} as 
\begin{equation}\label{eq:neweq21}
\begin{aligned}
\langle \nu^p_h \mid f^p_h \rangle_K &= (-1)^{p+1}\langle d_{pq}e^q_h  \mid \nu^p_h \rangle_K + (-1)^p \frac{1}{2}\langle Q(\mathrm{tr}_qe^q_h)\mid \mathrm{tr}_p\nu^p_h\rangle_{{E_p(\partial K)}^{\ast} \times E_p(\partial K)}\\
&\ \ \ \ \ + (-1)^p \langle {u^p_h}^{\ast} \mid \mathrm{tr}_p\nu^p_h\rangle_{{E_p(\partial K)}^{\ast} \times E_p(\partial K)}\\
\langle \nu^q_h \mid f^q_h \rangle_K &=\langle \nu^q_h \mid d_{qp}e^p_h \rangle_K + (-1)^p\frac{1}{2} \langle Q(\mathrm{tr}_q\nu^q_h)\mid\mathrm{tr}_pe^p_h\rangle_{{E_p(\partial K)}^{\ast} \times E_p(\partial K)}\\
&\ \ \ \ + (-1)^p\langle Q\mathrm{tr}_q(\nu^q_h) \mid u^p_h\rangle_{{E_p(\partial K)}^{\ast} \times E_p(\partial K)}.
\end{aligned}
\end{equation} 
Using integration by parts rule, \eqref{eq:int1} and the relation $p+q=n+1$,
\begin{equation}\label{eq:neweq3}
\begin{aligned}
\langle \nu^p_h \mid f^p_h \rangle_K &= - \langle e^q_h \mid d_{qp} \nu^p_h\rangle_K + (-1)^{p+1}\langle \mathrm{tr}_q e^q_h \mid \mathrm{tr}_p \nu^p_h \rangle_{\partial K}\\
&\ \ \ \ \ + (-1)^p \frac{1}{2}\langle Q(\mathrm{tr}_qe^q_h)\mid \mathrm{tr}_p\nu^p_h\rangle_{{E_p(\partial K)}^{\ast} \times E_p(\partial K)}\\
&\ \ \ \ \ + (-1)^p\langle {u^p_h}^{\ast} \mid \mathrm{tr}_p\nu^p_h\rangle_{{E_p(\partial K)}^{\ast} \times E_p(\partial K)}\\
\langle \nu^q_h \mid f^q_h \rangle_K &= (-1)^p\langle d_{pq}\nu^q_h \mid e^p_h \rangle_K +(-1)^{p+1} \langle \mathrm{tr}_q \nu^q_h \mid \mathrm{tr}_pe^p_h \rangle_{\partial K}\\
&\ \ \ \ \ +(-1)^p \frac{1}{2}\langle  Q(\mathrm{tr}_q\nu^q_h)\mid\mathrm{tr}_pe^p_h\rangle_{{E_p(\partial K)}^{\ast} \times E_p(\partial K)}\\
&\ \ \ \ +(-1)^p \langle Q\mathrm{tr}_q(\nu^q_h) \mid u^p_h\rangle_{{E_p(\partial K)}^{\ast} \times E_p(\partial K)} \nonumber.
\end{aligned}
\end{equation}
Using \eqref{eq:operatordual1}, we get \eqref{eq:newlemma1}.
\end{proof}
\begin{theorem}\label{the:elestu}
The structure \eqref{eq:disdirac} is a Dirac structure, which implies that
\begin{equation}
\begin{aligned}
\langle e^p_h \mid f^p_h \rangle_K + \langle e^q_h \mid f^q_h \rangle_K &+ \langle {y^p_h}^{\ast} \mid u^p_h \rangle_{{E_p(\partial K)}^{\ast} \times E_p(\partial K)}\\
&+ \langle   {u^p_h}^{\ast} \mid  y^p_h \rangle _{{E_p(\partial K)}^{\ast} \times E_p(\partial K)}= 0.
\end{aligned}
\end{equation}
\end{theorem}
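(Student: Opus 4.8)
The plan is to apply Lemma \ref{lem:Dirac} to the subspace $D_h\subset F'_{p,q}\times E'_{p,q}$. Both spaces are finite dimensional, and the bilinear product \eqref{eq:power12} is non-degenerate on $F'_{p,q}\times E'_{p,q}$: its two volume parts are non-degenerate duality products by the results of Section \ref{sec:disdualityelement} ($\{E_p(K),F_p(K)\}$ and $\{E_q(K),F_q(K)\}$ being dual pairs), and its two boundary parts are the canonical pairings on $E_p(\partial K)^{\ast}\times E_p(\partial K)$, which are available precisely because $Q$ is surjective, as recorded after Lemma \ref{lem:operatorq}. Hence it suffices to verify (a) that \eqref{eq:power12} vanishes on every element of $D_h$, and (b) that $\dim D_h=\dim F'_{p,q}$. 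Note that (a) is literally the identity displayed in the theorem, so the ``which implies'' clause is obtained en route rather than as a separate deduction.

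For (a), fix an arbitrary $(f^p_h,f^q_h,y^p_h,{y^p_h}^{\ast},e^p_h,e^q_h,{u^p_h}^{\ast},u^p_h)\in D_h$. The first matrix equation of \eqref{eq:disdirac} writes $f^p_h$ and $f^q_h$ in exactly the form assumed in Lemma \ref{lem:intbyparts}, so I would apply that lemma with the test forms $\nu^p_h=e^p_h$ and $\nu^q_h=e^q_h$ to expand $\langle e^p_h\mid f^p_h\rangle_K$ and $\langle e^q_h\mid f^q_h\rangle_K$ via \eqref{eq:newlemma1}. Substituting in addition the second matrix equation of \eqref{eq:disdirac}, namely ${y^p_h}^{\ast}=-Q(\mathrm{tr}_qe^q_h)$ and $y^p_h=-(-1)^p\mathrm{tr}_pe^p_h$, into the two boundary pairings of \eqref{eq:power12} and summing, the contributions carrying $u^p_h$ and ${u^p_h}^{\ast}$ cancel against the corresponding terms coming from \eqref{eq:newlemma1}, and one is left with the two $\tfrac12$-symmetrisation terms (both proportional to $\langle Q(\mathrm{tr}_qe^q_h)\mid\mathrm{tr}_pe^p_h\rangle$) together with the residual volume integrals $-\langle e^q_h\mid d_{qp}e^p_h\rangle_K+(-1)^p\langle d_{pq}e^q_h\mid e^p_h\rangle_K$. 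Applying the element integration by parts rule \eqref{eq:int1} once more converts these volume integrals into a boundary term of the form $(-1)^p\langle\mathrm{tr}_qe^q_h\mid\mathrm{tr}_pe^p_h\rangle_{\partial K}$, which by \eqref{eq:operatordual1} equals $(-1)^p\langle Q(\mathrm{tr}_qe^q_h)\mid\mathrm{tr}_pe^p_h\rangle$ and cancels the symmetrisation terms, giving power zero. This sign bookkeeping is the one genuinely delicate part: the factors $\tfrac12$ in \eqref{eq:disdirac} are engineered exactly so that the symmetrised boundary contributions match the boundary term created by integration by parts, and one must carefully use $p+q=n+1$, $r_1=pq+1$, and the sign $(-1)^{p(n-p)}$ incurred when the arguments of $\langle\cdot\mid\cdot\rangle_K$ are interchanged.

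For (b), the relations defining $D_h$ in \eqref{eq:disdirac} determine $(f^p_h,f^q_h,y^p_h,{y^p_h}^{\ast})\in F'_{p,q}$ uniquely and linearly from $(e^p_h,e^q_h,{u^p_h}^{\ast},u^p_h)\in E'_{p,q}$, so $D_h$ is the graph of a linear map $E'_{p,q}\to F'_{p,q}$ and therefore $\dim D_h=\dim E'_{p,q}$. Since $F_p(K)$ is the dual of $E_p(K)$ and $F_q(K)$ the dual of $E_q(K)$ (Section \ref{sec:disdualityelement}), we have $\dim F_p(K)=\dim E_p(K)$ and $\dim F_q(K)=\dim E_q(K)$, while $E_p(\partial K)$ and $E_p(\partial K)^{\ast}$ trivially have equal dimension, whence $\dim F'_{p,q}=\dim E'_{p,q}=\dim D_h$. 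With (a) and (b) in hand, Lemma \ref{lem:Dirac} yields $D_h=D_h^{\perp}$, i.e. $D_h$ is a Dirac structure, and the displayed power identity is then also immediate from \eqref{eq:power}. The main obstacle throughout is the sign tracking in step (a); everything else is bookkeeping that the earlier lemmas have already prepared.
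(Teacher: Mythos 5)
Your proposal is correct and follows essentially the same route as the paper's proof: power conservation is established by applying Lemma \ref{lem:intbyparts} with $\nu^p_h=e^p_h$, $\nu^q_h=e^q_h$, using the integration by parts rule \eqref{eq:int1} and \eqref{eq:operatordual1} to cancel the symmetrisation terms, substituting the output relations ${y^p_h}^{\ast}=-Q(\mathrm{tr}_qe^q_h)$, $y^p_h=-(-1)^p\mathrm{tr}_pe^p_h$, and then invoking Lemma \ref{lem:Dirac} together with the dimension count $\dim D_h=\dim F'_{p,q}=\dim E'_{p,q}$. Your ``graph of a linear map'' justification of that dimension count is simply a more explicit version of the paper's one-line duality remark.
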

\begin{proof}
The power for the Dirac structure \eqref{eq:disdirac} is 
\begin{equation}\label{eq:discretepower}
\begin{aligned}
\langle e^p_h \mid f^p_h \rangle_K + \langle e^q_h \mid f^q_h \rangle_K + \langle {y^p_h}^{\ast} \mid u^p_h \rangle_{{E_p(\partial K)}^{\ast} \times E_p(\partial K)} + \langle   {u^p_h}^{\ast} \mid  y^p_h \rangle _{{E_p(\partial K)}^{\ast} \times E_p(\partial K)}
\end{aligned}
\end{equation}
Using Lemma \ref{lem:intbyparts} and replacing $\nu^p_h \in E_p,  \nu^q_h \in E_q$ with $e^p_h \in E_p,  e^q_h \in E_q$, we get 
\begin{equation}
\begin{aligned}
&\langle e^p_h \mid f^p_h \rangle_K + \langle e^q_h \mid f^q_h \rangle_K + \langle {y^p_h}^{\ast} \mid u^p_h \rangle_{{E_p(\partial K)}^{\ast} \times E_p(\partial K)} + \langle   {u^p_h}^{\ast} \mid  y^p_h \rangle _{{E_p(\partial K)}^{\ast} \times E_p(\partial K)}\\
&=-\langle e^q_h \mid d_{qp} e^p_h\rangle_K + (-1)^{p+1} \frac{1}{2}  \langle  Q(\mathrm{tr}_qe^q_h)\mid\mathrm{tr}_pe^p_h\rangle_{{E_p(\partial K)}^{\ast} \times E_p(\partial K)}\\
&\ \ \ \ \ + (-1)^p\langle {u^p_h}^{\ast} \mid\mathrm{tr}_pe^p_h\rangle_{{E_p(\partial K)}^{\ast} \times E_p(\partial K)}\\
&\ \ \ \ +  (-1)^p\langle d_{pq}e^q_h \mid e^p_h \rangle_K  + (-1)^{p+1} \frac{1}{2} \langle Q(\mathrm{tr}_qe^q_h)\mid\mathrm{tr}_pe^p_h\rangle_{{E_p(\partial K)}^{\ast} \times E_p(\partial K)}\\
&\ \ \ \ + (-1)^p\langle Q\mathrm{tr}_q(e^q_h) \mid u^p_h\rangle_{{E_p(\partial K)}^{\ast} \times E_p(\partial K)}\\
&\ \ \ \ + \langle {y^p_h}^{\ast} \mid u^p_h \rangle_{{E_p(\partial K)}^{\ast} \times E_p(\partial K)} + \langle   {u^p_h}^{\ast} \mid  y^p_h \rangle _{{E_p(\partial K)}^{\ast} \times E_p(\partial K)}
\end{aligned}
\end{equation}
Using \eqref{eq:int1}, we can further simplify 
\begin{align*}
&-\langle e^q_h \mid d_{qp} e^p_h\rangle_K + (-1)^{p+1} \frac{1}{2}  \langle  Q(\mathrm{tr}_qe^q_h)\mid\mathrm{tr}_pe^p_h\rangle_{{E_p(\partial K)}^{\ast} \times E_p(\partial K)}\\
&\ \ \ \ \ + (-1)^p\langle {u^p_h}^{\ast} \mid\mathrm{tr}_pe^p_h\rangle_{{E_p(\partial K)}^{\ast} \times E_p(\partial K)}\\
&\ \ \ \ +  (-1)^p\langle d_{pq}e^q_h \mid e^p_h \rangle_K  + (-1)^{p+1} \frac{1}{2} \langle Q(\mathrm{tr}_qe^q_h)\mid\mathrm{tr}_pe^p_h\rangle_{{E_p(\partial K)}^{\ast} \times E_p(\partial K)}\\
&\ \ \ \ + (-1)^p\langle Q\mathrm{tr}_q(e^q_h) \mid u^p_h\rangle_{{E_p(\partial K)}^{\ast} \times E_p(\partial K)}\\
&\ \ \ \ + \langle {y^p_h}^{\ast} \mid u^p_h \rangle_{{E_p(\partial K)}^{\ast} \times E_p(\partial K)} + \langle   {u^p_h}^{\ast} \mid  y^p_h \rangle _{{E_p(\partial K)}^{\ast} \times E_p(\partial K)}\\
&= (-1)^p\langle \mathrm{tr}_qe^q_h \mid \mathrm{tr}_p e^p_h \rangle_{\partial K} + (-1)^{p+1} \langle  Q(\mathrm{tr}_qe^q_h)\mid\mathrm{tr}_pe^p_h\rangle_{{E_p(\partial K)}^{\ast} \times E_p(\partial K)}\\
&\ \ \ \ \ + (-1)^p\langle {u^p_h}^{\ast} \mid\mathrm{tr}_pe^p_h\rangle_{{E_p(\partial K)}^{\ast} \times E_p(\partial K)}  + (-1)^{p}\langle Q\mathrm{tr}_q(e^q_h) \mid u^p_h\rangle_{{E_p(\partial K)}^{\ast} \times E_p(\partial K)}\\
&\ \ \ \ \ + \langle {y^p_h}^{\ast} \mid u^p_h \rangle_{{E_p(\partial K)}^{\ast} \times E_p(\partial K)} + \langle   {u^p_h}^{\ast} \mid  y^p_h \rangle _{{E_p(\partial K)}^{\ast} \times E_p(\partial K)}
 \end{align*}
Using \eqref{eq:operatordual1}, we can further simplify as follows,
\begin{equation}\label{eq:discretepower2}
\begin{aligned}
&(-1)^p\langle \mathrm{tr}_qe^q_h \mid \mathrm{tr}_p e^p_h \rangle_{\partial K} + (-1)^{p+1} \langle  Q(\mathrm{tr}_qe^q_h)\mid\mathrm{tr}_pe^p_h\rangle_{{E_p(\partial K)}^{\ast} \times E_p(\partial K)}\\
&\ \ \ \ \ + (-1)^p\langle {u^p_h}^{\ast} \mid\mathrm{tr}_pe^p_h\rangle_{{E_p(\partial K)}^{\ast} \times E_p(\partial K)}  + (-1)^{p}\langle Q\mathrm{tr}_q(e^q_h) \mid u^p_h\rangle_{{E_p(\partial K)}^{\ast} \times E_p(\partial K)}\\
&\ \ \ \ \ + \langle {y^p_h}^{\ast} \mid u^p_h \rangle_{{E_p(\partial K)}^{\ast} \times E_p(\partial K)} + \langle   {u^p_h}^{\ast} \mid  y^p_h \rangle _{{E_p(\partial K)}^{\ast} \times E_p(\partial K)}\\
&=(-1)^p\langle {u^p_h}^{\ast} \mid\mathrm{tr}_pe^p_h\rangle_{{E_p(\partial K)}^{\ast} \times E_p(\partial K)}  + (-1)^{p}\langle Q\mathrm{tr}_q(e^q_h) \mid u^p_h\rangle_{{E_p(\partial K)}^{\ast} \times E_p(\partial K)}\\
&\ \ \ \ \ + \langle {y^p_h}^{\ast} \mid u^p_h \rangle_{{E_p(\partial K)}^{\ast} \times E_p(\partial K)} + \langle   {u^p_h}^{\ast} \mid  y^p_h \rangle _{{E_p(\partial K)}^{\ast} \times E_p(\partial K)}
\end{aligned}
\end{equation}
Using the relations ${y^p_h}^{\ast}= -Q(\mathrm{tr}_q(e^q_h)), y^p_h =-(-1)^p\mathrm{tr}_pe^p_h$ from \eqref{eq:disdirac} in \eqref{eq:discretepower2} we obtain
\begin{align*}
 & - \langle {u^p_h}^{\ast} \mid y^p_h \rangle_{{E_p(\partial K)}^{\ast} \times E_p(\partial K)} - \langle {y^p_h}^{\ast} \mid u^p_h\rangle_{{E_p(\partial K)}^{\ast} \times E_p(\partial K)}\\
 &\ \ \ \ \ + \langle {y^p_h}^{\ast} \mid u^p_h \rangle_{{E_p(\partial K)}^{\ast} \times E_p(\partial K)}+ \langle   {u^p_h}^{\ast} \mid  y^p_h \rangle _{{E_p(\partial K)}^{\ast} \times E_p(\partial K)} =0.
\end{align*}
Furthermore, because of the duality $\mathrm{\dim}(E'_{p,q})=\mathrm{\dim}(F'_{p,q})=\mathrm{\dim}(D_h)$. Using Lemma \ref{lem:Dirac}, it follows that \eqref{eq:disdirac} is a Dirac structure.
\end{proof}
Thus, for the Stokes-Dirac structure \eqref{eq:disdirac}, we obtain
\begin{equation}\label{eq:energybal}
\begin{aligned}
\langle e^p_h \mid f^p_h \rangle_K + \langle e^q_h \mid f^q_h \rangle_K &+ \langle {y^p_h}^{\ast} \mid u^p_h \rangle_{{E_p(\partial K)}^{\ast} \times E_p(\partial K)} \\
& + \langle   {u^p_h}^{\ast} \mid  y^p_h \rangle _{{E_p(\partial K)}^{\ast} \times E_p(\partial K)} =0.
\end{aligned}
\end{equation}
Hence power is conserved.
\par
Along with the elementwise Stokes-Dirac structure of Definition \ref{def:DGDS}, we need a power preserving interconnection structure at the boundaries of the elements so that we can connect all elements in the discretized manifold. We use the power preserving property of a Dirac structure and define a Dirac structure at the faces of the elements. We call such a Dirac structure an interconnection Dirac Structure. 
\subsubsection{The interconnection Dirac structure}
Let $\{K_1,K_2\} \in \mathcal{T}_h$ be two arbitrary elements sharing a common boundary $\partial K_1 \cap \partial K_2$. Using \eqref{eq:bounddef}, we have 
\begin{equation}\label{eq:bounddef1}
\begin{aligned}
E_p(\partial K_1) =\Big\{&(\mathrm{tr}_{f_1,\partial f_1} e^p_{h,f_1}, \mathrm{tr}_{f_2,\partial f_2} e^p_{h,f_2} , \cdots, \mathrm{tr}_{f_m},{\partial f}_m e^p_{h,f_m})\mid f_i \in \Delta(K_1),\\
&\mathrm{\dim}(f_i) \in[p,r+p-1], e^p_{h,f_i}\in E_p(f)\Big\}\\
E_p(\partial K_2) =\Big\{&(\mathrm{tr}_{f_1,\partial f_1} e^p_{h,f_1}, \mathrm{tr}_{f_2,\partial f_2} e^p_{h,f_2} , \cdots, \mathrm{tr}_{f_m},{\partial f}_m e^p_{h,f_m})\mid f_i \in \Delta(K_2),\\
&\mathrm{\dim}(f_i) \in[p,r+p-1], e^p_{h,f_i}\in E_p(f)\Big\},
\end{aligned}
\end{equation}
with $m$ being the number of subsimplices in $\partial \Delta(K)$.\\
As only the common boundaries between two elements will be connected through the interconnection Dirac Structure we split the function spaces $E_p(\partial K_1)$ and $E_p(\partial K_2)$ as 
\begin{equation}
\begin{aligned}
E_p(\partial K_1) &= E_p(\Delta_{1\cap2}) \oplus E_p(\Delta_{1 \neg 2}),\\
E_p(\partial K_2) &= E_p(\Delta_{1\cap2}) \oplus E_p(\Delta_{2 \neg 1}), 
\end{aligned}
\end{equation}
where $\Delta_{1\cap2}:= \partial K_1 \cap \partial K_2$, $\Delta_{1\neg2}= \partial K_1 - (\partial K_1 \cap \partial K_2)$ and $\Delta_{2\neg1}= \partial K_2$\\
$- (\partial K_1 \cap \partial K_2)$. We can then define 
\begin{equation}\label{eq:bounddef2}
\begin{aligned}
Z := E_p(\Delta_{1\cap2})=\Big((w^p_{h,L}\in E_p(\partial K_1)\vert_f, w^p_{h,R}\in E_p(\partial K_2)\vert_f)  \mid f \in \Delta_{1\cap2}\Big),
\end{aligned}
\end{equation}
and its dual function space as 
\begin{equation}\label{eq:bounddef4}
\begin{aligned}
Z^{\ast} := {E_p(\Delta_{1\cap2})}^{\ast}=\Big(&({w^{p\ast}_{h,L}}\in {E_p(\partial K_1)}^{\ast}\vert_f, {w^{p\ast}_{h,R}}\in {E_p(\partial K_2)}^{\ast}\vert_f)\mid f \in \Delta_{1\cap2}\Big).
\end{aligned}
\end{equation}
We choose the polynomial differential form space for the {\em interconnection} Dirac structure to be 
\begin{equation}
\begin{aligned}
Z_T:= Z \times {Z}^{\ast}.
\end{aligned} 
\end{equation}
The dual of $Z_T$, $Z^{\ast}_T =  {Z}^{\ast}\times Z$. For, $((w^p_{h,L}, w^p_{h,R}),({w^{p\ast}_{h,L}}, {w^{p\ast}_{h,R}})) \in Z_T$ and $((y^{p\ast}_{h,L}, y^{p\ast}_{h,R}),$\\
$(y^p_{h,L}, y^p_{h,R})) \in Z^{\ast}_T$, the duality product on the space $Z_T \times Z^{\ast}_T$ becomes 
\begin{equation}\label{eq:power2}
\begin{aligned}
&\langle {y^{p\ast}_{h,L}} \mid w^p_{h,L}\rangle_{{Z}^{\ast}\times Z} +\langle{w^{p\ast}_{h,L}} \mid y^p_{h,L}\rangle_{{Z}^{\ast}\times Z}+ \langle {y^{p\ast}_{h,R}} \mid w^p_{h,R}  \rangle_{{Z}^{\ast}\times Z} + \langle   {w^{p\ast}_{h,R}} \mid y^p_{h,R} \rangle_{{Z}^{\ast}\times Z}.
\end{aligned}
\end{equation}
Based on the duality product \eqref{eq:power2} we can now state the interconnection Dirac structure. 
\begin{definition}[Interconnection Dirac Structure]\label{def:intdirac}
For the boundary port variables, $((w^p_{h,L}, w^p_{h,R}),({w^{p\ast}_{h,L}}, {w^{p\ast}_{h,R}})) \in Z_T$ and $((y^{p\ast}_{h,L}, y^{p\ast}_{h,R}),(y^p_{h,L}, y^p_{h,R})) \in Z^{\ast}_T$,
 \begin{equation}
 \begin{aligned}
 D_c = \Bigg\{&(w^p_{h,L}, w^p_{h,R}, {w^{p\ast}_{h,L}}, {w^{p\ast}_{h,R}}, {y^{p\ast}_{h,L}},{y^{p\ast}_{h,R}}, y^p_{h,L}, y^p_{h,R}) \in Z_T \times Z^{\ast}_T) \mid \\
\label{eq:inter}
   &\left[ \begin{array}{c}w^p_{h,L}\\ {w^{p\ast}_{h,L}}\\ w^p_{h,R}\\ {w^{p\ast}_{h,R}} \end{array}\right] =   \left[ \begin{array}{cccc} 0 & -\frac{1}{2} + \theta & 0  & -\theta  \\ \frac{1}{2} -\theta &0 & \theta -1  & 0 \\ 0& 1-\theta & 0 & \theta-\frac{1}{2} \\ 
   \theta & 0 & \frac{1}{2} -\theta &0  \end{array}\right] \left[ \begin{array}{c}{y^{p\ast}_{h,L}}\\y^p_{h,L}\\ {y^{p\ast}_{h,R}}\\ y^p_{h,R} \end{array}\right],\Bigg\}
   \end{aligned}
\end{equation}
where $\theta \in [0,1]$.
\end{definition}
\begin{theorem}\label{the:boustu}
The interconnection structure \eqref{eq:inter} is a Dirac structure . 
\end{theorem}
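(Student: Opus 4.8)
The plan is to invoke Lemma \ref{lem:Dirac} with the finite-dimensional spaces $F_h = Z_T$ and $E_h = Z_T^{\ast}$ carrying the bilinear pairing \eqref{eq:power2}. The first thing I would record is that this pairing is non-degenerate: it is the sum of four instances of the canonical duality pairing between $Z$ and $Z^{\ast}$ (two of them with the roles of primal and dual factor interchanged), and each such canonical pairing is non-degenerate by the very construction of $Z^{\ast}$ in \eqref{eq:bounddef4}. Consequently $Z_T$ and $Z_T^{\ast}$ form a dual pair, so in particular $\dim(Z_T) = \dim(Z_T^{\ast})$. Also note that $D_c$ is visibly a linear subspace, being defined by a linear relation.

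The dimension condition of Lemma \ref{lem:Dirac} is then immediate: the relation in \eqref{eq:inter} exhibits $D_c$ as the graph of the linear map $J : Z_T^{\ast} \to Z_T$ given by the $4\times 4$ block matrix, i.e. $D_c = \{(Je, e) : e \in Z_T^{\ast}\}$, whence $\dim(D_c) = \dim(Z_T^{\ast}) = \dim(Z_T) = \dim(F_h)$.

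It remains to verify power conservation, i.e. that $\langle e \mid f \rangle = 0$ for every $(f,e) \in D_c$; equivalently, that $J$ is skew-symmetric with respect to the pairing \eqref{eq:power2}. Concretely, I would write $f = (w^p_{h,L}, w^{p\ast}_{h,L}, w^p_{h,R}, w^{p\ast}_{h,R})$ and $e = (y^{p\ast}_{h,L}, y^p_{h,L}, y^{p\ast}_{h,R}, y^p_{h,R})$, substitute the four rows of \eqref{eq:inter} into \eqref{eq:power2}, and collect terms. The result is a linear combination of the four canonical pairings $\langle y^{p\ast}_{h,L} \mid y^p_{h,L}\rangle$, $\langle y^{p\ast}_{h,L} \mid y^p_{h,R}\rangle$, $\langle y^{p\ast}_{h,R} \mid y^p_{h,L}\rangle$, $\langle y^{p\ast}_{h,R} \mid y^p_{h,R}\rangle$, whose coefficients come out to be $(\theta - \frac12) + (\frac12 - \theta)$, $-\theta + \theta$, $(\theta - 1) + (1 - \theta)$ and $(\theta - \frac12) + (\frac12 - \theta)$ respectively, each of which is $0$. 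Hence the pairing vanishes identically (in fact for every real $\theta$, the restriction $\theta\in[0,1]$ of Definition \ref{def:intdirac} not being needed here). Having checked non-degeneracy, the dimension count, and power conservation, Lemma \ref{lem:Dirac} gives $D_c = D_c^{\perp}$, so \eqref{eq:inter} is a Dirac structure.

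This proof has essentially no obstacle: the only computation is the substitution just described, and the one point requiring care is purely bookkeeping, namely correctly matching each flow component with the effort component it is paired against in \eqref{eq:power2} and tracking the signs in the block matrix $J$ of \eqref{eq:inter}.
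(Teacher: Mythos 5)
Your proof is correct and follows essentially the same route as the paper: both invoke Lemma \ref{lem:Dirac}, verify power conservation by substituting the block-matrix relation \eqref{eq:inter} into the pairing \eqref{eq:power2} and observing that all coefficients cancel, and then conclude via the dimension count $\dim(D_c)=\dim(Z_T)=\dim(Z_T^{\ast})$. Your graph-of-$J$ justification of the dimension equality (and the explicit remark on non-degeneracy of the pairing) is merely a slightly more explicit version of the paper's brief closing statement, not a different argument.
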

\begin{proof}
Using Lemma \ref{lem:Dirac} we will be able to prove that \eqref{eq:inter} is a Dirac structure if the power of \eqref{eq:inter} is zero and the dimensions of the spaces are equal. The power of the Dirac structure, \eqref{eq:inter} is given by,
\begin{equation}\label{eq:interpower}
\begin{aligned}
&\langle {y^{p\ast}_{h,L}} \mid w^p_{h,L}\rangle_{{Z}^{\ast}\times Z}+\langle  {w^{p\ast}_{h,L}}  \mid  y^p_{h,L} \rangle_{{Z}^{\ast}\times Z} + \langle {y^{p\ast}_{h,R}} \mid w^p_{h,R}  \rangle_{{Z}^{\ast}\times Z}\\
&+ \langle  {w^{p\ast}_{h,R}}  \mid  y^p_{h,R}  \rangle_{{Z}^{\ast}\times Z}=(-\frac{1}{2}+\theta)\langle {y^{p\ast}_{h,L}}\mid y^p_{h,L}\rangle_{{Z}^{\ast}\times Z} - \theta \langle {y^{p\ast}_{h,L}}\mid y^p_{h,R}\rangle_{{Z}^{\ast}\times Z}\\
&\ \ \ \ \ + (\frac{1}{2}-\theta)\langle {y^{p\ast}_{h,L}}\mid y^p_{h,L}\rangle_{{Z}^{\ast}\times Z}+ (\theta-1) \langle {y^{p\ast}_{h,R}}\mid y^p_{h,L}\rangle_{{Z}^{\ast}\times Z}\\
&\ \ \ \ \ + (1-\theta)\langle {y^{p\ast}_{h,R}}\mid y^p_{h,L}\rangle_{{Z}^{\ast}\times Z} + (\theta-\frac{1}{2}) \langle {y^{p\ast}_{h,R}}\mid y^p_{h,R}\rangle_{{Z}^{\ast}\times Z}\\
&\ \ \ \ \ + \theta\langle {y^{p\ast}_{h,L}}\mid y^p_{h,R}\rangle_{{Z}^{\ast}\times Z} + (\frac{1}{2}-\theta) \langle {y^{p\ast}_{h,R}}\mid y^p_{h,R}\rangle_{{Z}^{\ast}\times Z}\\
&=0.
\end{aligned}
\end{equation}
Moreover, the dual of the function space $Z_T$, ${Z_T}^{\ast}$ is the same function space $Z_T$ so $ \mathcal{\dim}(Z_T)=\mathcal{\dim}({Z_T}^{\ast})=\mathcal{\dim}(D_c)$. Hence, \eqref{eq:inter} is a Stokes-Dirac structure. 
\end{proof}
\section{Choice of Interconnection variable}\label{sec:choiceinter}
In this section, we prove that upon identification of the correct interconnection port variables, the coupling of discrete Dirac structures results again in a discrete Dirac structure. The interconnection is diagrammatically shown in Figure \ref{fig:Fig 1}.
\begin{figure}[H]
\begin{adjustbox}{center}
\begin{tikzpicture}
[
every label/.append style={font= \scriptsize},
]
\draw (1,0) ellipse (1 cm and 2 cm) node {$D_1$}; 
\draw[fill=black] (0.15,1) node[label=left:{${u^{p\ast}_1} \vert_{\Delta_{1 \neg 2}}$}] {} circle(0.1cm);
\draw[fill=black] (0.35,1.5) node [label=left:{$u^p_1 \vert_{\Delta_{1 \neg 2}}$}]{}circle(0.1cm);
\draw[fill=black] (0.35,-1.5) node[label=left:{${y^{p\ast}_1} \vert_{\Delta_{1 \neg 2}}$}] {} circle(0.1cm);
\draw[fill=black] (0.15,-1) node [label=left:{$y^p_1 \vert_{\Delta_{1 \neg 2}}$}]{}circle(0.1cm);
\draw[fill=black] (1.85,-1) node[label=right:{${u^{p\ast}_1}\vert_{\Delta_{1 \cap2}}$}] {} circle(0.1cm);
\draw[fill=black] (1.65,1.5) node [label=right:{$u^p_1\vert_{\Delta_{1 \cap 2}}$}]{}circle(0.1cm);
\draw[fill=black] (1.65,-1.5) node[label=right:{${y^{p\ast}_1}\vert_{\Delta_{1 \cap 2}} $}] {} circle(0.1cm);
\draw[fill=black] (1.85,1) node [label=right:{$y^p_1\vert_{\Delta_{1 \cap 2}}$}]{}circle(0.1cm);
\draw (9,0) ellipse (1 cm and 2 cm)node{$D_2$};
\draw[fill=black] (9.85,-1) node[label=right:{${u^{p\ast}_2} \vert_{\Delta_{2 \neg 1}}$}] {} circle(0.1cm);
\draw[fill=black] (9.65,1.5) node [label=right:{$u^p_2 \vert_{\Delta_{2 \neg 1}}$}]{}circle(0.1cm);
\draw[fill=black] (9.65,-1.5) node[label=right:{${y^{p\ast}_2} \vert_{\Delta_{2 \neg 1}}$}] {} circle(0.1cm);
\draw[fill=black] (9.85,1) node [label=right:{$y^p_2 \vert_{\Delta_{2 \neg 1}}$}]{}circle(0.1cm);
\draw[fill=black] (8.15,-1) node[label=left:{${u^{p\ast}_2}\vert_{\Delta_{1 \cap 2}} $}] {} circle(0.1cm);
\draw[fill=black] (8.35,1.5) node [label=left:{$u^p_2\vert_{\Delta_{1 \cap 2}}$}]{}circle(0.1cm);
\draw[fill=black] (8.35,-1.5) node[label=left:{${y^{p\ast}_2}\vert_{\Delta_{1 \cap 2}}$}] {} circle(0.1cm);
\draw[fill=black] (8.15,1) node [label=left:{$y^p_2\vert_{\Delta_{1 \cap 2}}$}]{}circle(0.1cm);
 \draw (5,0) ellipse (1 cm and 2 cm)node{$D_c$};
\draw[fill=black] (5.85,-1) node[label=right:{${w^{p\ast}_{h,R}}$}] {} circle(0.1cm);
\draw[fill=black] (5.65,1.5) node [label=right:{$w^p_{h,R}$}]{}circle(0.1cm);
\draw[fill=black] (5.65,-1.5) node[label=right:{${y^{p\ast}_{h,R}}$}] {} circle(0.1cm);
\draw[fill=black] (5.85,1) node [label=right:{$y^p_{h,R} $}]{}circle(0.1cm);
\draw[fill=black] (4.15,-1) node[label=left:{${w^{p\ast}_{h,L}}$}] {} circle(0.1cm);
\draw[fill=black] (4.35,1.5) node [label=left:{$w^p_{h,L}$}]{}circle(0.1cm);
\draw[fill=black] (4.35,-1.5) node[label=left:{${y^{p\ast}_{h,L}}$}] {} circle(0.1cm);
\draw[fill=black] (4.15,1) node [label=left:{$y^p_{h,L}$}]{}circle(0.1cm);
 \end{tikzpicture}
  \end{adjustbox}
  \caption{Coupling of two Dirac structures via the interconnection Dirac structure $D_c$}
  \label{fig:Fig 1}
 \end{figure}
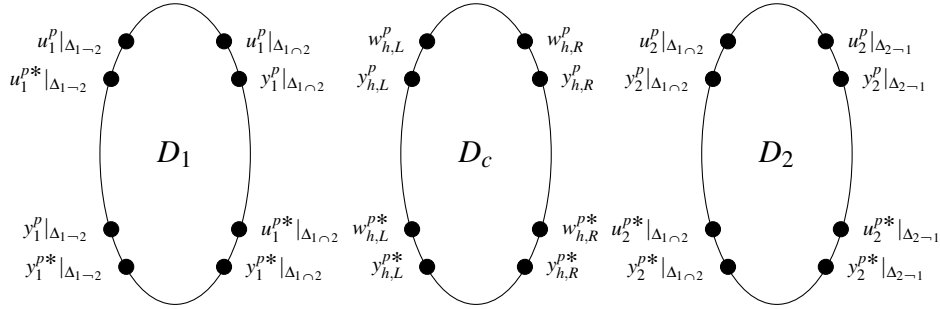
 Let us consider two simplices $K_1, K_2 \in \mathcal{T}_h$ with a common boundary. The conservation laws on $K_1$ are described by the port variables $(f^p_1,f^q_1,y^p_1,{y^p_1}^{\ast},e^p_1,e^q_1,{u^p_1}^{\ast},u^p_1) \in D_1  \subset F'_{p,q}(K_1)\times E'_{p,q}(K_1)$ and the conservation laws on $K_2$ by the port variables $(f^p_2,f^q_2,y^p_2,{y^p_2}^{\ast},
 e^p_2,$\\
 $e^q_2,{u^p_2}^{\ast},u^p_2) \in D_2 \subset F'_{p,q}(K_2)\times E'_{p,q}(K_2)$, respectively. We connect these two simplices through their common boundary $\Delta_{1\cap2}$.\\
For simplicity of notation we use 
\begin{equation}\label{eq:boundarynotation}
\begin{aligned}
Z:=E_p(\Delta_{1\cap2}),\qquad Z_1:=E_p(\Delta_{1\neg2}), \qquad Z_2:=E_p(\Delta_{2\neg1}).
 \end{aligned}
 \end{equation}
 For $F'_{p,q}(K_1) := F_p(K_1) \times F_q(K_1) \times E_p(\partial K_1) \times {E_p(\partial K_1)}^{\ast}$ and 
$E'_{p,q}(K_1) := E_p(K_1) \times E_q(K_1) \times {E_p(\partial K_1)}^{\ast} \times E_p(\partial K_1)$ the generalized Stokes-Dirac structure $D_1$ is defined as in Definition \ref{def:DGDS} and the interconnection Dirac structure $D_c$ is defined as in Definition \ref{def:intdirac}. Interconnecting $D_1$ and $D_c$, we define the structure $D_1 \circ D_c$ on $F'_{p,q}(K_1)\times E'_{p,q}(K_1)$, 
 \begin{equation}\label{eq:firstintersection}
\begin{aligned}
&D_1 \circ D_c = \Big\{(f^p_1,f^q_1,(y^p_1\vert_{\Delta_{1\neg2}},\ y^p_{h,R})^T,({y^{p\ast}_1}\vert_{\Delta_{1\neg2}},\ {y^{p\ast}_{h,R}})^T,e^p_1,e^q_1, \\ 
&\ \ \ \ \ \ \ \ ({u^{p\ast}_1}\vert_{\Delta_{1\neg2}},\ -{w^{p\ast}_{h,R}})^T,(u^p_1\vert_{\Delta_{1\neg2}},\ -w^p_{h,R})^T) \in F'_{p,q}(K_1)\times E'_{p,q}(K_1) \mid \\
&\ \ \ \ \ \ \ \ \exists \ u^p_1\vert_{\Delta_{1\cap2}},u^{p\ast}_1\vert_{\Delta_{1\cap2}},y^p_1\vert_{\Delta_{1\cap2}},y^{p\ast}_1\vert_{\Delta_{1\cap2}} \ \mathrm{s.t.\ with} \ \\
&\ \ \ \ \ \ \ \ y^{p\ast}_1 = ({y^{p\ast}_1}\vert_{\Delta_{1\neg2}},\ {y^{p\ast}_1}\vert_{\Delta_{1\cap2}})^T,  y^p_1 = ({y^p_1}\vert_{\Delta_{1\neg2}},\ {y^p_1}\vert_{\Delta_{1\cap2}})^T,\\
&\ \ \ \ \ \ \ \ u^{p\ast}_1 = ({u^{p\ast}_1}\vert_{\Delta_{1\neg2}},\ {u^{p\ast}_1}\vert_{\Delta_{1\cap2}})^T,  u^p_1 = ({u^p_1}\vert_{\Delta_{1\neg2}},\ {u^p_1}\vert_{\Delta_{1\cap2}})^T,\\
&\ \ \ \ \ \ \ \ \mathrm{holds} \ (f^p_1,f^q_1,y^p_1,y^{p\ast}_1,e^p_1,e^q_1,u^{p\ast}_1, u^p_1) \in D_1 \ \mathrm{and}\ (u^p_1\vert_{\Delta_{1\cap2}}, w^p_{h,R},\\
&\ \ \ \ \ \ \ \ u^{p\ast}_1\vert_{\Delta_{1\cap2}}, {w^{p\ast}_{h,R}}, y^{p\ast}_1\vert_{\Delta_{1\cap2}},{y^{p\ast}_{h,R}}, y^p_1\vert_{\Delta_{1\cap2}}, y^p_{h,R}) \in D_c.\Big\}
 \end{aligned}
\end{equation}
So when connecting $D_1$ and $D_c$, we basically take in Figure \ref{fig:Fig 1} $u^{p\ast}_1\vert_{\Delta_{1\cap2}} = w^{p\ast}_{h,L}$, $u^p_1\vert_{\Delta_{1\cap2}} = w^p_{h,L}$, $y^{p\ast}_1\vert_{\Delta_{1\cap2}} = y^{p\ast}_{h,L}$ and $y^p_1\vert_{\Delta_{1\cap2}} = y^p_{h,L}$.\\
The power of $D_1 \circ D_c$ is given by 
\begin{equation}\label{eq:interenergybal}
\begin{aligned}
&\langle e^p_1 \mid f^p_1 \rangle_{K_1} + \langle e^q_1 \mid f^q_1 \rangle_{K_1} + \langle {y^p_1}^{\ast} \mid u^p_1 \rangle_{Z_1^{\ast} \times Z_1} + \langle   {u^p_1}^{\ast} \mid  y^p_1 \rangle_{Z_1^{\ast} \times Z_1}\\
&- \langle {y^{p\ast}_{h,R}} \mid w^p_{h,R} \rangle_{Z^{\ast} \times Z} - \langle   {w^{p\ast}_{h,R}} \mid  y^p_{h,R}\rangle_{Z^{\ast} \times Z}.
\end{aligned}
\end{equation}
\begin{lemma}\label{lem:interconnectionlemma}
 The structure $D_1 \circ D_c$, \eqref{eq:firstintersection} with power given by \eqref{eq:interenergybal} is a Dirac structure.
 \end{lemma}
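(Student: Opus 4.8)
The plan is to invoke Lemma~\ref{lem:Dirac} on the finite dimensional space carrying the duality product that underlies the power~\eqref{eq:interenergybal} (this pairing is non-degenerate since each remaining port space pairs up with its dual). It then suffices to prove (i) that \eqref{eq:interenergybal} vanishes on every element of $D_1\circ D_c$, and (ii) that $\dim(D_1\circ D_c)$ equals the dimension of its flow space. This is the elementwise incarnation of the classical fact \cite{cervera2007interconnection} that a power-preserving composition of Dirac structures is again a Dirac structure, but I would establish it directly so as to keep the argument self-contained.

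For (i), take an element of $D_1\circ D_c$ together with the interconnection variables $u^p_1\vert_{\Delta_{1\cap2}},\ {u^{p\ast}_1}\vert_{\Delta_{1\cap2}},\ y^p_1\vert_{\Delta_{1\cap2}},\ {y^{p\ast}_1}\vert_{\Delta_{1\cap2}}$ whose existence is asserted in \eqref{eq:firstintersection}, so that the full $K_1$-tuple lies in $D_1$ and the coupling tuple lies in $D_c$. By Theorem~\ref{the:elestu} the power of $D_1$, $\langle e^p_1\mid f^p_1\rangle_{K_1}+\langle e^q_1\mid f^q_1\rangle_{K_1}+\langle{y^p_1}^{\ast}\mid u^p_1\rangle+\langle{u^p_1}^{\ast}\mid y^p_1\rangle$, vanishes. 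I would split the two boundary pairings along $E_p(\partial K_1)=E_p(\Delta_{1\cap2})\oplus E_p(\Delta_{1\neg2})$ and insert the identifications $u^p_1\vert_{\Delta_{1\cap2}}=w^p_{h,L}$, ${u^{p\ast}_1}\vert_{\Delta_{1\cap2}}=w^{p\ast}_{h,L}$, $y^p_1\vert_{\Delta_{1\cap2}}=y^p_{h,L}$, ${y^{p\ast}_1}\vert_{\Delta_{1\cap2}}=y^{p\ast}_{h,L}$, so that the $\Delta_{1\cap2}$-contribution becomes $\langle y^{p\ast}_{h,L}\mid w^p_{h,L}\rangle+\langle w^{p\ast}_{h,L}\mid y^p_{h,L}\rangle$; by Theorem~\ref{the:boustu} the power of $D_c$ is zero, so this equals $-\langle y^{p\ast}_{h,R}\mid w^p_{h,R}\rangle-\langle w^{p\ast}_{h,R}\mid y^p_{h,R}\rangle$. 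Substituting back reproduces exactly the right-hand side of \eqref{eq:interenergybal}, now equated to zero.

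For (ii), I would present $D_1\circ D_c$ as the image, under the projection $\pi$ that forgets the interconnection variables (equivalently the $L$-ports of $D_c$), of the subspace $\mathcal{M}\subset D_1\times D_c$ cut out by the four matching relations $u^p_1\vert_{\Delta_{1\cap2}}=w^p_{h,L}$, ${u^{p\ast}_1}\vert_{\Delta_{1\cap2}}=w^{p\ast}_{h,L}$, $y^p_1\vert_{\Delta_{1\cap2}}=y^p_{h,L}$, ${y^{p\ast}_1}\vert_{\Delta_{1\cap2}}=y^{p\ast}_{h,L}$. First I would show that the linear map sending $(d_1,d_c)\in D_1\times D_c$ to the four differences is surjective onto four copies of $E_p(\partial K_1)\vert_{\Delta_{1\cap2}}$, so that $\dim\mathcal{M}=\dim D_1+\dim D_c-4\dim\!\left(E_p(\partial K_1)\vert_{\Delta_{1\cap2}}\right)$; this holds because $e^p_1,e^q_1,{u^{p\ast}_1},u^p_1$ are free efforts in the graph form \eqref{eq:disdirac} and, $\mathrm{tr}_p,\mathrm{tr}_q$ being surjective onto the boundary spaces and $Q$ being surjective (Lemma~\ref{lem:operatorq}), the quantities $u^p_1\vert_{\Delta_{1\cap2}}$, ${u^{p\ast}_1}\vert_{\Delta_{1\cap2}}$, $y^p_1\vert_{\Delta_{1\cap2}}=-(-1)^p\mathrm{tr}_pe^p_1\vert_{\Delta_{1\cap2}}$, ${y^{p\ast}_1}\vert_{\Delta_{1\cap2}}=-Q(\mathrm{tr}_qe^q_1)\vert_{\Delta_{1\cap2}}$ can be chosen independently. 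Next I would show that $\pi\vert_{\mathcal{M}}$ is injective: if all external ports of an element of $\mathcal{M}$ vanish then $e^p_1=e^q_1=0$, so \eqref{eq:disdirac} gives $y^p_1={y^{p\ast}_1}=0$ and hence $y^p_{h,L}=y^{p\ast}_{h,L}=0$; together with the external relations $y^p_{h,R}=y^{p\ast}_{h,R}=0$ all efforts of $D_c$ vanish, so \eqref{eq:inter} forces $w^p_{h,L}=w^{p\ast}_{h,L}=0$, whence $u^p_1\vert_{\Delta_{1\cap2}}={u^{p\ast}_1}\vert_{\Delta_{1\cap2}}=0$, and since $D_1$ is the graph of a map on its effort space the whole $D_1$- and $D_c$-tuples vanish. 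Therefore $\dim(D_1\circ D_c)=\dim\mathcal{M}$, and a routine count (using $\dim D_1=\dim F'_{p,q}(K_1)$, $\dim D_c=\dim Z_T$, the splitting $E_p(\partial K_1)=E_p(\Delta_{1\cap2})\oplus E_p(\Delta_{1\neg2})$ and the decomposition of $Z$ into its $K_1$- and $K_2$-sides) shows this equals the dimension of the flow component of the tuple in \eqref{eq:firstintersection}. Lemma~\ref{lem:Dirac} then finishes the proof.

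I expect (ii) to be the main obstacle, in particular the verification that the four matching relations are independent on $D_1\times D_c$ and that $\pi$ has trivial kernel on $\mathcal{M}$; both rely on the explicit graph form \eqref{eq:disdirac} and on the surjectivity of $Q$ rather than on any generic-position assumption. The power identity in (i) is, by comparison, a routine rearrangement once Theorems~\ref{the:elestu} and~\ref{the:boustu} are applied.
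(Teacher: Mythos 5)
Your proposal is correct, and its first half is exactly the paper's proof: the paper also establishes power conservation by writing the power balance of $D_1$ from Theorem \ref{the:elestu} with the boundary pairings split over $\Delta_{1\neg2}$ and $\Delta_{1\cap2}$, identifying the $\Delta_{1\cap2}$ terms with the left ports of $D_c$ via \eqref{eq:firstintersection}, and invoking Theorem \ref{the:boustu} to trade them for the negative of the right-port terms, which yields the vanishing of \eqref{eq:interenergybal}. Where you genuinely differ is part (ii): the paper stops after the power identity and simply concludes that $D_1\circ D_c$ is a Dirac structure, leaving the dimension condition of Lemma \ref{lem:Dirac} unverified, whereas you make the composition explicit as the image of the fibered subspace $\mathcal{M}\subset D_1\times D_c$, prove the four matching constraints are independent (using the free effort/input variables of the graph form \eqref{eq:disdirac}, surjectivity of the traces and of $Q$, or alternatively the free output variables of \eqref{eq:inter}), show $\pi\vert_{\mathcal{M}}$ has trivial kernel, and count dimensions. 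This buys a genuinely complete application of Lemma \ref{lem:Dirac} at the cost of a longer argument; note only that your count tacitly uses that the two dual copies and the $K_2$-side restriction of $E_p$ on the shared face all have the same dimension as $E_p(\partial K_1)\vert_{\Delta_{1\cap2}}$, which is natural here (same polynomial spaces on the common face) but is worth stating, since otherwise "four copies of $E_p(\partial K_1)\vert_{\Delta_{1\cap2}}$" is a slight abuse for two copies of that space and two of its dual.
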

 \begin{proof}
Using Theorem \ref{the:elestu} we can state 
\begin{equation}\label{eq:brokenenergybal1}
\begin{aligned}
&\langle e^p_1 \mid f^p_1 \rangle_{K_1} + \langle e^q_1 \mid f^q_1 \rangle_{K_1} + \langle {y^p_1}^{\ast} \mid u^p_1 \rangle_{Z_1^{\ast} \times Z_1} + \langle   {u^p_1}^{\ast} \mid  y^p_1 \rangle_{Z_1^{\ast} \times Z_1}\\
&+ \langle {y^p_1}^{\ast} \mid u^p_1 \rangle_{Z^{\ast} \times Z} + \langle   {u^p_1}^{\ast} \mid  y^p_1\rangle_{Z^{\ast} \times Z}=0.
\end{aligned}
\end{equation}
We can rewrite \eqref{eq:brokenenergybal1} as
\begin{equation}\label{eq:brokenenergybal2}
\begin{aligned}
&\langle e^p_1 \mid f^p_1 \rangle_{K_1} + \langle e^q_1 \mid f^q_1 \rangle_{K_1} + \langle {y^p_1}^{\ast} \mid u^p_1 \rangle_{Z_1^{\ast} \times Z_1} + \langle   {u^p_1}^{\ast} \mid  y^p_1 \rangle_{Z_1^{\ast} \times Z_1}\\
&+ \langle {y^{p\ast}_{h,L}} \mid w^p_{h,L} \rangle_{Z^{\ast} \times Z} + \langle   {w^{p\ast}_{h,L}} \mid  y^p_{h,L}\rangle_{Z^{\ast} \times Z}=0.
\end{aligned}
\end{equation}
Using the power preserving property of interconnection Dirac structure $D_c$, Theorem \ref{the:boustu}, we have 
\begin{equation}\nonumber
\begin{aligned}
&\langle {y^{p\ast}_{h,L}} \mid w^p_{h,L}\rangle_{{Z}^{\ast}\times Z}+\langle  {w^{p\ast}_{h,L}}  \mid  y^p_{h,L} \rangle_{{Z}^{\ast}\times Z} = - \langle {y^{p\ast}_{h,R}} \mid w^p_{h,R}  \rangle_{{Z}^{\ast}\times Z} - \langle  {w^{p\ast}_{h,R}}  \mid  y^p_{h,R}  \rangle_{{Z}^{\ast}\times Z}.
\end{aligned}
\end{equation}
Thus, \eqref{eq:brokenenergybal2} finally becomes 
\begin{equation}
\begin{aligned}
&\langle e^p_1 \mid f^p_1 \rangle_{K_1} + \langle e^q_1 \mid f^q_1 \rangle_{K_1} + \langle {y^p_1}^{\ast} \mid u^p_1 \rangle_{Z_1^{\ast} \times Z_1} + \langle   {u^p_1}^{\ast} \mid  y^p_1 \rangle_{Z_1^{\ast} \times Z_1}\\
&- \langle {y^{p\ast}_{h,R}} \mid  w^p_{h,R}  \rangle_{{Z}^{\ast}\times Z}- \langle  {w^{p\ast}_{h,R}}  \mid  y^p_{h,R}  \rangle_{{Z}^{\ast}\times Z}=0.
\end{aligned}
\end{equation}
Hence, $D_1 \circ D_c$ is a Dirac structure.
 \end{proof}
Similarly, the generalized Stokes-Dirac structure $D_2$ can be defined as in Definition \ref{def:DGDS} on $F'_{p,q}(K_2)\times E'_{p,q}(K_2)$. Interconnecting $D_1\circ D_c$ and $D_2$, we define the structure $D_1 \circ D_c \circ D_2$ on $F'_{p,q}(K_1\cup K_2)\times E'_{p,q}(K_1 \cup K_2)$
\begin{equation}\label{eq:brokendirac11}
\begin{aligned}
D_1 \circ D_c \circ D_2 = \Big\{&((f^p_1,f^p_2)^T,(f^q_1,f^q_2)^T,(y^p_1\vert_{\Delta_{1\neg2}},\ y^p_2\vert_{\Delta_{2\neg1}})^T,({y^p_1}^{\ast}\vert_{\Delta_{1\neg2}},\ {y^p_2}^{\ast}\vert_{\Delta_{2\neg1}})^T,\\
& (e^p_1,e^p_2)^T,(e^q_1,e^q_2)^T,({u^p_1}^{\ast}\vert_{\Delta_{1\neg2}},{u^p_2}^{\ast}\vert_{\Delta_{2\neg1}})^T,\\
&(u^p_1\vert_{\Delta_{1\neg2}},\ u^p_2\vert_{\Delta_{2\neg1}})^T) \in F'_{p,q}(K_1\cup K_2)\times E'_{p,q}(K_1 \cup K_2) \mid \\
&\exists \ u^p_2\vert_{\Delta_{1\cap2}},u^{p\ast}_2\vert_{\Delta_{1\cap2}},y^p_2\vert_{\Delta_{1\cap2}},y^{p\ast}_2\vert_{\Delta_{1\cap2}} \ \mathrm{s.t.\ with} \ \\
&y^{p\ast}_2 = ({y^{p\ast}_2}\vert_{\Delta_{2\neg1}},\ {y^{p\ast}_2}\vert_{\Delta_{1\cap2}})^T,  y^p_2 = ({y^p_2}\vert_{\Delta_{2\neg1}},\ {y^p_2}\vert_{\Delta_{1\cap2}})^T,\\
&u^{p\ast}_2 = ({u^{p\ast}_2}\vert_{\Delta_{2\neg1}},\ {u^{p\ast}_2}\vert_{\Delta_{1\cap2}})^T,  u^p_2 = ({u^p_2}\vert_{\Delta_{2\neg1}},\ {u^p_2}\vert_{\Delta_{1\cap2}})^T,\\
&\mathrm{holds}\ (f^p_1,f^q_1,(y^p_1\vert_{\Delta_{1\neg2}},\ y^p_2\vert_{\Delta_{1\cap2}})^T,({y^{p\ast}_1}\vert_{\Delta_{1\neg2}},\ y^{p\ast}_2\vert_{\Delta_{1\cap2}})^T,e^p_1,e^q_1, \\ 
&({u^{p\ast}_1}\vert_{\Delta_{1\neg2}},\ u^{p\ast}_2\vert_{\Delta_{1\cap2}})^T,(u^p_1\vert_{\Delta_{1\neg2}},\ u^p_2\vert_{\Delta_{1\cap2}})^T) \in D_1 \circ D_c\  \mathrm{and}\ 
(f^p_2,f^q_2,\\
&y^p_2,y^{p\ast}_2,e^p_2,e^q_2,u^{p\ast}_2, u^p_2) \in D_2.\Big\}
 \end{aligned}
\end{equation}
So when connecting $D_1 \circ D_c$ and $D_2$, we basically take in Figure \ref{fig:Fig 1} $u^{p\ast}_2\vert_{\Delta_{1\cap2}} = w^{p\ast}_{h,R}$, $u^p_2\vert_{\Delta_{1\cap2}} = w^p_{h,R}$, $y^{p\ast}_2\vert_{\Delta_{1\cap2}} = y^{p\ast}_{h,R}$ and $y^p_2\vert_{\Delta_{1\cap2}} = y^p_{h,R}$.\\
The power of $D_1\circ D_c\circ D_2$ is given by 
\begin{equation}\label{eq:brokenenergybal}
\begin{aligned}
\langle e^p_1 \mid f^p_1 \rangle_{K_1} + \langle e^q_1 \mid f^q_1 \rangle_{K_1}+\langle e^p_2 \mid f^p_2 \rangle_{K_2} + \langle e^q_2 \mid f^q_2 \rangle_{K_2}+ \langle {y^p_1}^{\ast} \mid u^p_1 \rangle_{Z_1^{\ast} \times Z_1} \\
+ \langle   {u^p_1}^{\ast} \mid  y^p_1\rangle_{Z_1^{\ast} \times Z_1} + \langle {y^p_2}^{\ast} \mid u^p_2 \rangle_{Z_2^{\ast} \times Z_2} + \langle   {u^p_2}^{\ast} \mid  y^p_2 \rangle_{Z_2^{\ast} \times Z_2}.
\end{aligned}
\end{equation}
\begin{lemma}\label{lem:interconnectionlemma1}
 The structure $D_1 \circ D_c \circ D_2$, \eqref{eq:brokendirac11} with power given by \eqref{eq:brokenenergybal} is a Dirac structure.
 \end{lemma}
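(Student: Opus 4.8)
The plan is to repeat, one more time, the argument of Lemma~\ref{lem:interconnectionlemma}, now taking $D_1\circ D_c$ (known to be a Dirac structure) in the role previously played by $D_1$, and composing it with $D_2$. The two ingredients are: first, by Lemma~\ref{lem:interconnectionlemma}, every element of $D_1\circ D_c$ satisfies the power identity \eqref{eq:interenergybal}$=0$; second, by Theorem~\ref{the:elestu} applied on the simplex $K_2$, every element of $D_2$ satisfies \eqref{eq:energybal}$=0$, i.e.
\begin{equation*}
\langle e^p_2\mid f^p_2\rangle_{K_2}+\langle e^q_2\mid f^q_2\rangle_{K_2}+\langle {y^p_2}^{\ast}\mid u^p_2\rangle_{{E_p(\partial K_2)}^{\ast}\times E_p(\partial K_2)}+\langle {u^p_2}^{\ast}\mid y^p_2\rangle_{{E_p(\partial K_2)}^{\ast}\times E_p(\partial K_2)}=0 .
\end{equation*}
The first step is simply to write these two identities down for an arbitrary element of $D_1\circ D_c\circ D_2$.

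The second step is to add the two identities and insert the interconnection identifications dictated by \eqref{eq:brokendirac11}, namely $u^{p\ast}_2\vert_{\Delta_{1\cap2}}=w^{p\ast}_{h,R}$, $u^p_2\vert_{\Delta_{1\cap2}}=w^p_{h,R}$, $y^{p\ast}_2\vert_{\Delta_{1\cap2}}=y^{p\ast}_{h,R}$ and $y^p_2\vert_{\Delta_{1\cap2}}=y^p_{h,R}$. Splitting the $D_2$-boundary pairing over $\partial K_2=\Delta_{2\neg1}\cup\Delta_{1\cap2}$, the $\Delta_{1\cap2}$ part of the $D_2$-power equals $\langle y^{p\ast}_{h,R}\mid w^p_{h,R}\rangle_{Z^{\ast}\times Z}+\langle w^{p\ast}_{h,R}\mid y^p_{h,R}\rangle_{Z^{\ast}\times Z}$, which is exactly the negative of the shared-face terms appearing in \eqref{eq:interenergybal}; hence these contributions cancel in the sum. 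What remains is precisely \eqref{eq:brokenenergybal}, so the composed structure conserves power on $F'_{p,q}(K_1\cup K_2)\times E'_{p,q}(K_1\cup K_2)$.

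The third step is the dimension count needed to apply Lemma~\ref{lem:Dirac}. As in the proof of Theorem~\ref{the:elestu}, $E'_{p,q}(K_1\cup K_2)$ and $F'_{p,q}(K_1\cup K_2)$ are mutually dual and of equal dimension; forming $D_1\circ D_c\circ D_2$ eliminates the internal coupling variables living on $\Delta_{1\cap2}$, and because the operator $Q$ is surjective (Lemma~\ref{lem:operatorq}) these eliminations remove the same number of flow and effort degrees of freedom. One concludes $\dim(D_1\circ D_c\circ D_2)=\dim(F'_{p,q}(K_1\cup K_2))$, and Lemma~\ref{lem:Dirac}, together with the power conservation just established, yields that $D_1\circ D_c\circ D_2$ is a Dirac structure. (This is of course consistent with the general fact that the power-preserving composition of Dirac structures is again a Dirac structure, \cite{cervera2007interconnection,jacob2012linear}.)

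I expect the only genuinely delicate point to be the bookkeeping in the second and third steps: keeping track of the sign conventions attached to the $R$-side variables in \eqref{eq:firstintersection} and \eqref{eq:brokendirac11}, and checking that the variables eliminated on $\Delta_{1\cap2}$ are balanced between the $\mathcal F$- and $\mathcal E$-sides so that the dimension equality required by Lemma~\ref{lem:Dirac} holds exactly. The rest is a direct transcription of the single-interconnection case treated in Lemma~\ref{lem:interconnectionlemma}.
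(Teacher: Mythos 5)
Your proposal is correct and follows essentially the same route as the paper: the paper proves this lemma by repeating the argument of Lemma \ref{lem:interconnectionlemma}, i.e.\ adding the power identity of $D_1\circ D_c$ to the Dirac power identity of $D_2$ on $K_2$, using the identifications $u^{p\ast}_2\vert_{\Delta_{1\cap2}}=w^{p\ast}_{h,R}$, $u^p_2\vert_{\Delta_{1\cap2}}=w^p_{h,R}$, $y^{p\ast}_2\vert_{\Delta_{1\cap2}}=y^{p\ast}_{h,R}$, $y^p_2\vert_{\Delta_{1\cap2}}=y^p_{h,R}$ so that the shared-face terms cancel, leaving \eqref{eq:brokenenergybal}$=0$, and then invoking Lemma \ref{lem:Dirac}. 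Your explicit attention to the dimension count is, if anything, slightly more careful than the paper's own treatment.
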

 \begin{proof}
 The proof is similar to the proof of Lemma \ref{lem:interconnectionlemma}.
 \end{proof}

\section{Galerkin formulation of the discontinuous Finite element port-Hamiltonian system} \label{sec:DGPHFEM}
After defining the Stokes-Dirac structure for individual elements \eqref{eq:disdirac} and connecting them via the interconnection Dirac structure \eqref{eq:inter} the next step is to formulate the discontinuous Galerkin formulation for the Stokes-Dirac structure on a single element $K \in \mathcal{T}_h$ and then to generalize it to the whole discretized manifold $\Omega$ by connections through the interconnection Dirac structure.
\subsection{DG formulation in port variables}
For element $K \in \mathcal{T}_h$, port variables $e^p_h \in E_p(K),\ e^q_h \in E_q(K) ,\ f^p_h \in F_p(K),\ f^q_h \in F_q(K)$, and external input boundary port variables $ u^p_h \in E_p(\partial K), {u^p_h}^{\ast} \in {E_p(\partial K)}^{\ast}$, the conservation laws stated through the Stokes-Dirac structure \eqref{eq:disdirac} are 
\begin{subequations}
\begin{align}
f^p_h&= (-1)^{r_1}d_{pq}e^q_h +(-1)^{r_1+q}\frac{1}{2} \mathrm{tr}^{\ast}_pQ(\mathrm{tr}_q e^q_h) + (-1)^{r_1+q}\mathrm{tr}^{*}_p {u^p_h}^{\ast} ,\label{eq:diseqa} \\ 
f^q_h &= d_{qp}e^p_h +(-1)^p\frac{1}{2} (Q\mathrm{tr}_q)^{\ast}(\mathrm{tr}_pe^p_h) +(-1)^p(Q\mathrm{tr})^{\ast} u^p_h\label{eq:diseqb}.
\end{align}
\end{subequations}
Note that $p + q = n+1$ and $r_1 = pq +1$.\\ 
Consider two elements, $K_1, K_2 \in \mathcal{T}_h$ connected through the common boundary $\partial K_1 \cap \partial K_2$. Using Lemma \ref{lem:intbyparts}, \eqref{eq:boundarynotation} and the linearity of duality products, for $\nu^p_h \in E_p(K_1), \nu^q_h \in E_q(K_1)$, we get
\begin{equation}\label{eq:weakform3}
\begin{aligned}
\langle \nu^p_h \mid f^p_h \rangle_{K_1} &=- \langle e^q_h \mid d_{qp}\nu^p_h \rangle_{K_1} +(-1)^{p+1} \langle \frac{1}{2}Q(\mathrm{tr}_q e^q_h) - {u^p_h}^{\ast}\mid \mathrm{tr}_p \nu^p_h \rangle_{Z^{\ast}_1 \times Z_1}\\
&\ \ \ \ +(-1)^{p+1}\langle \frac{1}{2}Q(\mathrm{tr}_q e^q_h) - {u^p_h}^{\ast}\mid \mathrm{tr}_p \nu^p_h \rangle_{{Z}^{\ast} \times Z},\\
\langle \nu^q_h\mid f^q_h\rangle_{K_1} &= (-1)^{p}\langle  d_{pq} \nu^q_h \mid e^p_h \rangle_{K_1}+(-1)^{p+1} \langle Q(\mathrm{tr}_q \nu^q_h)\mid \frac{1}{2}\mathrm{tr}_pe^p_h-u^p_h \rangle_{Z^{\ast}_1 \times Z_1}\\
&\ \ \ \ +(-1)^{p+1} \langle Q(\mathrm{tr}_q \nu^q_h)\mid \frac{1}{2}\mathrm{tr}_pe^p_h-u^p_h \rangle_{{Z}^{\ast} \times Z},
\end{aligned}
\end{equation}
and
\begin{equation}\label{eq:weakform4}
\begin{aligned}
\langle \tilde{\nu}^p_h \mid \tilde{f}^p_h \rangle_{K_2} &=- \langle \tilde{e}^q_h \mid d_{qp}\tilde{\nu}^p_h \rangle_{K_2} +(-1)^{p+1} \langle \frac{1}{2}Q(\mathrm{tr}_q \tilde{e}^q_h )- {\tilde{u}_h}^{p\ast}\mid \mathrm{tr}_p \tilde{\nu}^p_h \rangle_{Z^{\ast}_2 \times Z_2}\\
&\ \ \ \ +(-1)^{p+1}\langle \frac{1}{2}Q(\mathrm{tr}_q \tilde{e}^q_h) -{\tilde{u}_h}^{p\ast}\mid \mathrm{tr}_p \tilde{\nu}^p_h \rangle_{{Z}^{\ast} \times Z},\\
\langle \tilde{\nu}^q_h\mid \tilde{f}^q_h\rangle_{K_2} &= (-1)^{p}\langle  d_{pq} \tilde{\nu}^q_h  \mid \tilde{e}^p_h\rangle_{K_2} + (-1)^{p+1} \langle Q(\mathrm{tr}_q \tilde{\nu}^q_h)\mid \frac{1}{2}\mathrm{tr}_p\tilde{e}^p_h-\tilde{u}^p_h\rangle_{Z^{\ast}_2 \times Z_2}\\
&\ \ \ \ +(-1)^{p+1} \langle Q(\mathrm{tr}_q \tilde{\nu}^q_h)\mid \frac{1}{2}\mathrm{tr}_p\tilde{e}^p_h-\tilde{u}^p_h \rangle_{{Z}^{\ast} \times Z}.
\end{aligned}
\end{equation}
With the same argument given in Section \ref{sec:choiceinter} and using Figure \ref{fig:Fig 1} as a reference we choose the port variables defined on the interconnection to be, 
\begin{equation}\label{eq:choice1}
\begin{aligned}
w^p_{h,L} &= u^{p}_h\vert_{\Delta_{1\cap2}}, \ {w^{p\ast}_{h,L}} ={u^{p}_h}^{\ast}\vert_{\Delta_{1\cap2}}, \ w^p_{h,R} =- \tilde{u}^{p}_h\vert_{\Delta_{1\cap2}},\ {w^{p\ast}_{h,R}}=-{\tilde{u}_h}^{p^\ast}\vert_{\Delta_{1\cap2}},\\
y^p_{h,L} &=  \mathrm{tr}_pe^p_h\vert_{\Delta_{1\cap2}}, \ {y^{p\ast}_{h,L}} = Q(\mathrm{tr}_qe^q_h)\vert_{\Delta_{1\cap2}}, \ y^p_{h,R} =  \mathrm{tr}_p\tilde{e}^p_h\vert_{\Delta_{1\cap2}},\\
\ {y^{p\ast}_{h,R}} &= Q(\mathrm{tr}_q\tilde{e}^q_h)\vert_{\Delta_{1\cap2}}.\\
\end{aligned}
\end{equation}
Note the minus sign in front of $w^p_{h,R}$ and ${w^{p\ast}_{h,R}}$ has been introduced by considering the correct direction of energy flow.\\
Using \eqref{eq:inter} we then obtain
\begin{equation}\label{eq:extport}
\begin{aligned}
u^{p}_h\vert_{\Delta_{1\cap2}}&= (\theta - \frac{1}{2})\mathrm{tr}_pe^p_h\vert_{\Delta_{1\cap2}} - \theta   \mathrm{tr}_p\tilde{e}^p_h\vert_{\Delta_{1\cap2}},\\
{u^{p}_h}^{\ast}\vert_{\Delta_{1\cap2}}
&=( \frac{1}{2} - \theta )Q(\mathrm{tr}_qe^q_h\vert_{\Delta_{1\cap2}})+(\theta-1) Q(\mathrm{tr}_q\tilde{e}^q_h\vert_{\Delta_{1\cap2}}),\\
\tilde{u}^{p}_h\vert_{\Delta_{1\cap2}} &=-(1-\theta) \mathrm{tr}_pe^p_h\vert_{\Delta_{1\cap2}} - (\theta -\frac{1}{2})  \mathrm{tr}_p\tilde{e}^p_h\vert_{\Delta_{1\cap2}},\\
{\tilde{u}_h}^{p^\ast}\vert_{\Delta_{1\cap2}}
&=-\theta Q(\mathrm{tr}_qe^q_h\vert_{\Delta_{1\cap2}})- (\frac{1}{2}-\theta) Q(\mathrm{tr}_q\tilde{e}^q_h\vert_{\Delta_{1\cap2}}).\\
\end{aligned}
\end{equation}
Substituting the values for the input ports as given in \eqref{eq:extport}, we can simplify \eqref{eq:weakform3} and \eqref{eq:weakform4} as
\begin{equation}\label{eq:weak1}
\begin{aligned}
\langle \nu^p_h \mid f^p_h \rangle_{K_1} &=-\langle e^q_h \mid d_{qp}\nu^p_h \rangle_{K_1} +(-1)^{p+1} \langle \frac{1}{2}Q(\mathrm{tr}_qe^q_h) - {u^p_h}^{\ast}\mid \mathrm{tr}_p\nu^p_h \rangle_{{Z^{\ast}_1} \times Z_1}\\
&\ \ \ \ +(-1)^{p+1} \langle\theta Q(\mathrm{tr}_qe^q_h)+ (1-\theta)Q(\mathrm{tr}_q\tilde{e}^q_h)\mid \mathrm{tr}_p \nu^p_h \rangle_{{Z}^{\ast} \times Z},\\
\langle \nu^q_h\mid f^q_h\rangle_{K_1} &=(-1)^{p} \langle   d_{pq} \nu^q_h \mid e^p_h \rangle_{K_1} +(-1)^{p+1}  \langle Q(\mathrm{tr}_q \nu^q_h)\mid  \frac{1}{2}\mathrm{tr}_pe^p_h-u^p_h \rangle_{{Z^{\ast}_1} \times Z_1}\\
&\ \ \ \ +(-1)^{p+1}  \langle Q(\mathrm{tr}_q \nu^q_h) \mid (1-\theta)\mathrm{tr}_pe^p_h + \theta\mathrm{tr}_p\tilde{e}^p_h\rangle_{{Z}^{\ast} \times Z},
\end{aligned}
\end{equation}
and
\begin{equation}\label{eq:weak2}
\begin{aligned}
\langle \tilde{\nu}^p_h \mid \tilde{f}^p_h \rangle_{K_2} &=-\langle \tilde{e}^q_h \mid d_{qp}\tilde{\nu}^p_h \rangle_{K_2} +(-1)^{p+1}\langle \frac{1}{2}Q(\mathrm{tr}_q\tilde{e}^q_h) - {\tilde{u}_h}^{p\ast} \mid \mathrm{tr}_p\tilde{\nu}^p_h \rangle_{{Z^{\ast}_2} \times Z_2}\\
&\ \ \ \ +(-1)^{p+1} \langle\theta Q(\mathrm{tr}_qe^q_h)+ (1-\theta)Q(\mathrm{tr}_q\tilde{e}^q_h)\mid \mathrm{tr}_p \tilde{\nu}^p_h \rangle_{{Z}^{\ast} \times Z},\\
\langle \tilde{\nu}^q_h\mid \tilde{f}^q_h\rangle_{K_2} &=(-1)^{p} \langle d_{pq} \tilde{\nu}^q_h \mid  \tilde{e}^p_h  \rangle_{K_2} +(-1)^{p+1}  \langle Q(\mathrm{tr}_q \tilde{\nu}^q_h)\mid \frac{1}{2}\mathrm{tr}_p\tilde{e}^p_h-\tilde{u}^p_h\rangle_{{Z^{\ast}_2} \times Z_2}\\
&\ \ \ \ +(-1)^{p+1}  \langle Q(\mathrm{tr}_q \tilde{\nu}^q_h) \mid (1-\theta)\mathrm{tr}_pe^p_h + \theta\mathrm{tr}_p\tilde{e}^p_h\rangle_{{Z}^{\ast} \times Z}.
\end{aligned}
\end{equation}
As the operator $Q : E_q(\partial K) \rightarrow {E_p(\partial K)}^{\ast}$ is surjective, using \eqref{eq:operatordual}, \eqref{eq:operatordual1}, there exists a $u^q_h \in E_q(\partial K)$ such that ${u^p_h}^{\ast}=Q(u^q_h)$. But, we have not defined the exact definition of the operator $Q$ that can be used for numerical computation so we need to remove $Q$ from our DG formulation. Using $\langle {u^p_h}^{\ast} \mid u^p_h\rangle_{{E_p(\partial K)}^{\ast} \times E_p(\partial K)}= \langle u^q_h \mid u^p_h\rangle_{\partial K}$ in \eqref{eq:weak1} and \eqref{eq:weak2}  and then adding results for the two element system gives
\begin{equation}\label{eq:weak3}
\begin{aligned}
&\underset{K \in \{K_1,K_2\}}{\sum}\langle  \nu^p_h \mid f^p_h \rangle_K  =\underset{K \in \{K_1,K_2\}}{\sum}-\langle e^q_h \mid d_{qp}  \nu^p_h \rangle_K\\
&\ \ \ \ +(-1)^{p+1} \langle \frac{1}{2}\mathrm{tr}_qe^q_h- u^q_h \mid \mathrm{tr}_p\nu^p_h \rangle_{\Delta_{1\neg2}} +(-1)^{p+1}  \langle \frac{1}{2}\mathrm{tr}_q\tilde{e}^q_h - \tilde{u}^q_h \mid \mathrm{tr}_p\tilde{\nu}^p_h \rangle_{\Delta_{2\neg1}}\\
&\ \  \ \ +(-1)^{p+1} \langle\theta \mathrm{tr}_qe^q_h+ (1-\theta) \mathrm{tr}_q\tilde{e}^q_h\mid \mathrm{tr}_p \nu^p_h+\mathrm{tr}_p \tilde{\nu}^p_h\rangle_{\Delta_{1\cap2}},\\
&\underset{K \in \{K_1,K_2\}}\sum\langle \nu^q_h\mid f^q_h\rangle_{K}  = \underset{K \in \{K_1,K_2\}}\sum(-1)^{p}\langle d_{pq}  \nu^q_h\mid  e^p_h\rangle_{K}\\
&\ \ \ \ +(-1)^{p+1}  \langle \mathrm{tr}_q \nu^q_h\mid \frac{1}{2}\mathrm{tr}_pe^p_h-u^p_h \rangle_{\Delta_{1\neg2}} +(-1)^{p+1}\langle \mathrm{tr}_q \tilde{\nu}^q_h\mid \frac{1}{2}\mathrm{tr}_p\tilde{e}^p_h-\tilde{u}^p_h\rangle_{\Delta_{2\neg1}}\\
&\ \ \ \  +(-1)^{p+1} \langle \mathrm{tr}_q\nu^q_h+ \mathrm{tr}_q \tilde{\nu}^q_h\mid (1-\theta)\mathrm{tr}_pe^p_h + \theta\mathrm{tr}_p\tilde{e}^p_h\rangle_{\Delta_{1\cap2}}.
\end{aligned}
\end{equation}
Equation \eqref{eq:weak3} can be generalized to any two elements $K_i$ and $K_j$, connected through a common internal face, $\partial K_i \cap \partial K_j$. For the tessellation $\mathcal{T}_h$ of the manifold $\Omega$, we denote the variables $\mathrm{tr}_q\nu^q_h(K_1)\vert_{\partial K_1\cap \partial K_2}$ as $\mathrm{tr}_q\nu^q_h\vert_L$ (contribution from the left element) and $\mathrm{tr}_q\nu^q_h(K_2)\vert_{\partial K_1\cap \partial K_2}$ as $\mathrm{tr}_q\nu^q_h\vert_R$ (contribution from the right element). \\
We can then add all elements $K$ in the discretised manifold $\mathcal{T}_h$. Let $\mathcal{F}_o$ denoting the set of all external boundaries and $\mathcal{F}_i$ denote the set of all internal boundaries. We obtain the following discontinuous Galerkin formulation on the entire discretized manifold 
 \begin{equation}\label{eq:weak7}
\begin{aligned}
\underset{K \in \mathcal{T}_h}{\sum}\langle  \nu^p_h \mid f^p_h \rangle_K  &=\underset{K \in \mathcal{T}_h}{\sum}-\langle e^q_h \mid d_{qp}  \nu^p_h \rangle_K +(-1)^{p+1} \underset{F \in \mathcal{F}_o}\sum \langle\frac{1}{2}\mathrm{tr}_qe^q_h - u^q_h   \mid \mathrm{tr}_p\nu^p_h \rangle_{F} \\
&\ \ \ \ +(-1)^{p+1} \underset{F \in \mathcal{F}_i}\sum \langle\theta\mathrm{tr}_qe^q_h\vert_L+ (1-\theta) \mathrm{tr}_qe^q_h\vert_R\mid \mathrm{tr}_p \nu^p_h\vert_L+\mathrm{tr}_p \nu^p_h\vert_R\rangle_{F} \\
\underset{K \in \mathcal{T}_h}\sum\langle \nu^q_h\mid f^q_h\rangle_{K}  &= \underset{K \in \mathcal{T}_h}\sum(-1)^{p}\langle  d_{pq}  \nu^q_h  \mid e^p_h \rangle_{K} +(-1)^{p+1}  \underset{F \in \mathcal{F}_o}\sum\langle \mathrm{tr}_q \nu^q_h\mid\frac{1}{2}\mathrm{tr}_pe^p_h-u^p_h\rangle_{F}\\
&\ \ \ \ +(-1)^{p+1} \underset{F \in \mathcal{F}_i}\sum\langle \mathrm{tr}_q\nu^q_h\vert_L + \mathrm{tr}_q \nu^q_h\vert_R\mid (1-\theta)\mathrm{tr}_pe^p_h\vert_L + \theta\mathrm{tr}_pe^p_h\vert_R\rangle_{F}.
\end{aligned}
\end{equation}
We can rewrite \eqref{eq:weak7} in the following manner 
 \begin{equation}\label{eq:weak8}
\begin{aligned}
\underset{K \in \mathcal{T}_h}{\sum}\langle  \nu^p_h \mid f^p_h \rangle_K  &=\underset{K \in \mathcal{T}_h}\sum-\langle e^q_h \mid d_{qp}  \nu^p_h \rangle_K +(-1)^{p+1}\underset{F \in \mathcal{F}_o} \sum \langle \hat{\beta}^q_h \mid \mathrm{tr}_p\nu^p_h\rangle_{F}\\
&\ \ \ \ +(-1)^{p+1} \underset{F \in \mathcal{F}_i}\sum\langle\hat{\beta}^q_h\mid \mathrm{tr}_p \nu^p_h\vert_L+\mathrm{tr}_p \nu^p_h\vert_R\rangle_F\\
\underset{K \in  \mathcal{T}_h}\sum\langle \nu^q_h\mid f^q_h\rangle_{K} &= \underset{K \in  \mathcal{T}_h}\sum (-1)^{p}\langle  d_{pq}  \nu^q_h  \mid e^p_h \rangle_{K} +(-1)^{p+1}  \underset{F \in \mathcal{F}_o} \sum \langle \mathrm{tr}_q \nu^q_h\mid \hat{\beta}^p_h \rangle_F \\
&\ \ \ \ +(-1)^{p+1} \underset{F \in \mathcal{F}_i}\sum\langle \mathrm{tr}_q\nu^q_h\vert_L + \mathrm{tr}_q \nu^q_h\vert_R\mid \hat{\beta}^p_h\rangle_F,
\end{aligned}
\end{equation}
where $\hat{\beta}^q_h$ and $\hat{\beta}^p_h$ can be considered as the numerical fluxes in the discontinuous Galerkin formulation. For $F \in \mathcal{F}_i$ the numerical fluxes are equal to
\begin{equation}\label{eq:flu}
\begin{aligned}
\hat{\beta}^p_h&= (1 - \theta) \mathrm{tr}_pe^{p}_h\vert_L +  \theta \mathrm{tr}_pe^{p}_h\vert_R,\\
\hat{\beta}^q_h&=  \theta \mathrm{tr}_qe^{q}_h\vert_L + ( 1- \theta) \mathrm{tr}_qe^{q}_h\vert_R,\\
\end{aligned}
\end{equation}
 with $\theta \in [0,1]$ and for $F \in \mathcal{F}_o$ the numerical fluxes are equal to 
\begin{equation}\label{eq:flub}
\begin{aligned}
\hat{\beta}^p_h &= \frac{1}{2}\mathrm{tr}_pe^p_h - u^p_h, &\qquad \hat{\beta}^q_h &=  \frac{1}{2}\mathrm{tr}_qe^q_h - u^q_h,
\end{aligned}
\end{equation}
where the input boundary ports $u^p_h$ and $u^q_h$ are to be chosen depending on the external boundary conditions.
\subsection{DG formulation in state space variables}
We start with a pH-system (PDE) represented by the Stokes-Dirac structure \eqref{eq:powbal}, in which the port variables $e^p, e^q, f^p$ and $f^q$ are related to the energy variables $\alpha^p, \alpha^q$ and co-energy variables $\beta^p, \beta^q$ through the relations given by
\begin{equation}
\begin{aligned}
f^p &= -\frac{d \alpha^p}{dt}, &\qquad f^q &= -\frac{d \alpha^q}{dt},\\
e^p &= \beta^p = \delta_p\mathit{H}, &\qquad  e^q &= \beta^q = \delta_q\mathit{H},
\end{aligned}
\end{equation}
where $\mathit{H}$ is the Hamiltonian of the system.\\
Upon discretization let the discretized variables corresponding to the energy and co-energy variables be $\alpha^p_h, \alpha^q_h$ and $\beta^p_h, \beta^q_h$, respectively. The generalized Stokes-Dirac structure \eqref{eq:disdirac} represents the discretized Stokes-Dirac structure over each element, thus, the differential equation expressed in terms of discretized energy and co-energy variables is given by 
\begin{equation}\label{eq:energydirac}
\begin{aligned}
\left[ \begin{array}{c} -\frac{d \alpha^p_h}{dt} \\ -\frac{d \alpha^q_h}{dt} \end{array}\right]&=
 \left[ \begin{array}{cc} 0& (-1)^{r_1}d_{pq} + (-1)^{r_1+q} \frac{1}{2}  \mathrm{tr}^{\ast}_p Q(\mathrm{tr}_q)   \\ d_{qp} + (-1)^p\frac{1}{2}(Q\mathrm{tr}_q)^{\ast} \mathrm{tr}_p & 0 \end{array}\right] \left[ \begin{array}{c} \beta^p_h \\ \beta^q_h\end{array}\right]\\[5pt]
  & \ \ \ \ + \left[ \begin{array}{cc} 0 & (-1)^{r_1+q}\mathrm{tr}^{\ast}_p  \\ (-1)^p(Q\mathrm{tr}_q)^{\ast} & 0 \end{array}\right] \left[ \begin{array}{c} u^{p}_h\\ {u^p_h}^{\ast} \end{array}\right],\\[5pt]
 \left[ \begin{array}{c}  {y^p_h}^{\ast} \\ y^p_h \end{array}\right]  &= \left[ \begin{array}{cc} 0 & -Q(\mathrm{tr}_q) \\ -(-1)^p\mathrm{tr}_p & 0 \end{array}\right] \left[ \begin{array}{c} \beta^p_h  \\ \beta^q_h \end{array}\right],
 \end{aligned}
 \end{equation}
 where $p+q=n+1, r_1=pq+1$ and $Q$ defined as in \eqref{eq:operatordual} and \eqref{eq:operatordual1}.\\
 Here on each element we have used 
 \begin{equation}\label{eq:r1}
 \begin{aligned}
f^p_h &= -\frac{d \alpha^p_h}{dt}, &\qquad f^q_h&= -\frac{d \alpha^q_h}{dt},\\
e^p_h &= \beta^p_h, &\qquad   e^q_h&= \beta^q_h.
\end{aligned}
\end{equation}
Following Section 6.1, as the spaces of the energy and co-energy variables are the same as the corresponding port variables, we can rewrite the weak form \eqref{eq:weak8} in terms of the energy and co-energy variables 
\begin{equation}\label{eq:energyweak}
\begin{aligned}
\underset{K \in \mathcal{T}_h}{\sum}\langle \nu^p_h \mid \dot{\alpha}^p_h \rangle_K  &=\underset{K \in \mathcal{T}_h}\sum\langle \beta^q_h \mid d_{qp}  \nu^p_h \rangle_K +(-1)^{p} \underset{F \in \mathcal{F}_o} \sum \langle \hat{\beta}^q_h \mid \mathrm{tr}_p\nu^p_h\rangle_{F}\\
&\ \ \ \ +(-1)^{p} \underset{F \in \mathcal{F}_i}\sum\langle\hat{\beta}^q_h\mid \mathrm{tr}_p \nu^p_h\vert_L+\mathrm{tr}_p \nu^p_h\vert_R\rangle_F\\
\underset{K \in  \mathcal{T}_h}\sum\langle \nu^q_h\mid \dot{\alpha}^q_h\rangle_{K} &= (-1)^{p+1}\underset{K \in  \mathcal{T}_h}\sum\langle  d_{pq}  \nu^q_h\mid \beta^p_h \rangle_{K} +(-1)^{p}  \underset{F \in \mathcal{F}_o} \sum \langle \mathrm{tr}_q \nu^q_h\mid \hat{\beta}^p_h \rangle_F \\
&\ \ \ \ +(-1)^{p} \underset{F \in \mathcal{F}_i}\sum\langle \mathrm{tr}_q \nu^q_h\vert_L + \mathrm{tr}_q \nu^q_h\vert_R\mid \hat{\beta}^p_h\rangle_F,
\end{aligned}
\end{equation}
where $\hat{\beta^q_h}$ and $\hat{\beta^p_h}$ are the numerical fluxes \eqref{eq:flu}, \eqref{eq:flub} in the discontinuous Galerkin formulation. 
\section{Energy Conservation}\label{sec:Energy stability}
Consider an oriented polyhedral manifold $\Omega$ in $\mathbb{R}^n$ with Lipschitz continuous boundary $\partial \Omega$ discretized into shape regular finite elements $K \in \mathcal{T}_h$. Let $\mathcal{F}_h = \mathcal{F}^o_h \cup \mathcal{F}^i_h$ denote the set of all boundaries of the discretized manifold with $\mathcal{F}^o_h$ and $\mathcal{F}^i_h$ being the set of all external and internal faces, respectively.\\
At a given time $t$, let $\alpha^p(t) \in L^2\Lambda^p(\Omega), \alpha^q(t) \in L^2\Lambda^q(\Omega),$ $ \beta^p(t):=\delta_p \mathit{H} \in H^1\Lambda^{n-p}(\Omega)$ and $\beta^q(t) := \delta_q \mathit{H} \in H\Lambda^{n-q}(\Omega)$ be the solution satisfying the partial differential equations represented by the Dirac structure \eqref{eq:powbal}. The Hamiltonian of this system is given by 
\begin{equation}
\begin{aligned}
H &= \sum_{K \in \mathcal{T}_h}\Big(\langle \beta^p \mid \alpha^p \rangle_K + \langle \beta^q \mid \alpha^q \rangle_K\Big).
\end{aligned}
\end{equation}
We have the following constitutive relations for the port-Hamiltonian system
\begin{equation}\label{eq:constitutive}
\begin{aligned}
\alpha^p = C_p \ast \beta^p,\\
\alpha^q= C_q \ast \beta^q,
\end{aligned}
\end{equation}
where $C_p, C_q \in L^2\Lambda^0(\Omega)$ are coefficient functions that depend on the spatial domain and the physical problem under consideration.\\
Here, $\ast$ is the Hodge star operator defined in Definition \ref{def:Hodgestar}. Using the constitutive relations we can rewrite the Hamiltonian for the system as
\begin{equation}
\begin{aligned}
H &=\sum_{K \in \mathcal{T}_h}\Big(\langle \beta^p \mid C_p\ast \beta^p \rangle_K + \langle \beta^q \mid C_q\ast \beta^q \rangle_K\Big).
\end{aligned}
\end{equation}
Given the discontinuous finite element spaces 
 \begin{equation}\label{eq:discontinuousspace}
 \begin{aligned}
 F_p(\mathcal{T}_h) &:= \{\alpha^p_h \in L^2\Lambda^p(\Omega)\mid \alpha^p_h\vert_K \in F_p(K) \quad \forall K \in \mathcal{T}_h\},\\
 F_q(\mathcal{T}_h) &:= \{\alpha^q_h \in L^2\Lambda^q(\Omega) \mid \alpha^q_h\vert_K \in F_q(K) \quad \forall K \in \mathcal{T}_h\},\\
 E_p(\mathcal{T}_h) &:= \{\beta^p_h \in L^2\Lambda^{n-p}(\Omega) \mid \beta^p_h\vert_K \in E_p(K) \quad \forall K \in \mathcal{T}_h\},\\
 E_q(\mathcal{T}_h) &:= \{\beta^q_h \in L^2\Lambda^{n-q}(\Omega) \mid \beta^q_h\vert_K \in E_q(K) \quad \forall K \in \mathcal{T}_h\}.
 \end{aligned}
 \end{equation}
 Let $\alpha^p_h(t) \in F_p(\mathcal{T}_h), \alpha^q_h(t) \in F_q(\mathcal{T}_h), \beta^p_h(t) \in E_p(\mathcal{T}_h)$ and $\beta^q_h(t) \in E_q(\mathcal{T}_h)$ be the discrete solutions satisfying the discrete Stokes-Dirac structure \eqref{eq:energydirac}. The discrete energy of the system is then given by 
 \begin{equation}\label{eq:discreteenergy}
 \begin{aligned}
 E_h= \underset{K \in \mathcal{T}_h}\sum\langle \beta^p_h \mid \alpha^p_h \rangle_K +  \langle \beta^q_h \mid \alpha^q_h \rangle_K.
 \end{aligned}
 \end{equation}
 \subsection{Discrete Hodge star duality}\label{sec:conhodge}
We extend the $L^2$ inner product between $\beta^p(t), \nu^p \in L^2\Lambda^{n-p}(\Omega)$ and $\beta^q(t), \nu^q \in L^2\Lambda^{n-q}(\Omega)$, given by \eqref{eq:intro}, into a weighted inner product as
\begin{equation}\label{eq:L2innerpro}
\begin{aligned}
\langle \beta^p,\nu^p \rangle_{wL^2\Lambda^{n-p}(\Omega)} =  \int_{\Omega} C_p \ast \beta^p \wedge \nu^p = \int_{\Omega} \beta^p(\mu) \cdot \nu^p(\mu) \cdot C_p(\mu)d\mu,\\
\langle \beta^q,\nu^q \rangle_{wL^2\Lambda^{n-q}(\Omega)} =  \int_{\Omega} C_q \ast \beta^q \wedge \nu^q = \int_{\Omega} \beta^q(\mu) \cdot \nu^q(\mu) \cdot C_q(\mu)d\mu,
\end{aligned}
\end{equation}
where $d\mu$ is the Lebesgue measure on $\Omega$, and the functions $C_p,C_q,$ with $C_p, C_q \geq C >0$, stated in \eqref{eq:constitutive}.\\
Let $e^p_h, \tilde{e}^p_h \in E_p$ and $e^q_h, \tilde{e}^q_h \in E_q$. Analogously, we define a weighted inner product on $E_p$ and $E_q$,
\begin{equation}\label{eq:proinner}
\begin{aligned}
g_{wp}(e^p_h,\tilde{e}^p_h) &= \underset{f \in \Delta(K), \mathrm{\dim}(f)=[p,p+r-1]}\sum \int_f e^p_c(\mu) \cdot \tilde{e}^p_c(\mu) \cdot C_p(\mu) d\mu,\\
g_{wq}(e^q_h, \tilde{e}^q_h) &= \int_K e^q_c(\mu) \cdot \tilde{e}^q_c(\mu) \cdot C_q(\mu) d\mu,\\
\end{aligned}
\end{equation}
where $d\mu$ is the Lebesgue measure on element $K$. Here, $e^p_c, e^q_c$ denote the coefficients for the polynomial differential forms $e^p_h, e^q_h$, respectively. Analogously, $\tilde{e}^p_c, \tilde{e}^q_c$ denote the coefficients for the polynomial differential forms $\tilde{e}^p_h, \tilde{e}^q_h$, respectively. \\
For fixed $\tilde{e}^p_h$ and $\tilde{e}^q_h$, \eqref{eq:proinner} defines a linear functional in $e^p_h$ and $e^q_h$, respectively. As, $E_p$ and $F_p$ are dual to each other, there exists an $f^p_h \in F_p(K)$ such that $ \langle f^p_h \mid e^p_h \rangle_K = g_{wp}(e^p_h, \tilde{e}^p_h),\forall e^p_h \in E_p$. Similarly, there exists an $f^q_h \in F_q(K)$ such that $ \langle f^q_h \mid e^q_h \rangle_K = g_{wq}(e^q_h, \tilde{e}^q_h)$,
$\forall e^q_h \in E_q$. We denote $f^p_h = \star_p \tilde{e}^p_h$ and $f^q_h = \star_q\tilde{e}^q_h$, and call $\star_p$ and $\star_q$ discrete Hodge star operators. Thus
\begin{equation}\label{eq:relationinner}
\begin{aligned}
\langle e^p_h \mid  \star_p \tilde{e}^p_h \rangle_K &= g_{wp}(e^p_h,\tilde{e}^p_h), \quad\forall e^p_h \in E_p(K),\\
\langle e^q_h \mid \star_q \tilde{e}^q_h \rangle_K &= g_{wq}(e^q_h,\tilde{e}^q_h),\quad \forall e^q_h \in E_p(K).
\end{aligned}
\end{equation}
Hence in the discrete framework the constitutive relationship between the state space variables $\alpha^p_h, \beta^p_h$ and $\alpha^q_h, \beta^q_h$ is 
\begin{equation}\label{eq:hsrelation}
\begin{aligned}
\alpha^p_h &= \star_p \beta^p_h,\\
 \alpha^q_h &= \star_q \beta^q_h.
 \end{aligned}
 \end{equation}
 Using \eqref{eq:energyweak} and \eqref{eq:hsrelation} we can now state the port-Hamiltonian DG discretization of \eqref{eq:powbal} for $\beta^p_h(t)\in E_p(\mathcal{T}_h), \beta^q_h(t)\in E_q(\mathcal{T}_h)$ as 
 \begin{equation}\label{eq:finalweak}
\begin{aligned}
\underset{K \in \mathcal{T}_h}{\sum}\langle \nu^p_h \mid \star_{p} \dot{\beta}^p_h \rangle_K  &=\underset{K \in \mathcal{T}_h}\sum\langle \beta^q_h \mid d_{qp}  \nu^p_h \rangle_K +(-1)^{p}\underset{F \in \mathcal{F}_o} \sum \langle \hat{\beta}^q_h\mid \mathrm{tr}_p\nu^p_h\rangle_{F}\\
&\ \ \ \ +(-1)^{p} \underset{F \in \mathcal{F}_i}\sum\langle\hat{\beta}^q_h\mid \mathrm{tr}_p \nu^p_h\vert_L+\mathrm{tr}_p \nu^p_h\vert_R\rangle_F,\\
\underset{K \in  \mathcal{T}_h}\sum\langle \nu^q_h\mid \star_{q} \dot{\beta}^q_h\rangle_{K} &= (-1)^{p+1}\underset{K \in  \mathcal{T}_h}\sum\langle d_{pq}  \nu^q_h\mid  \beta^p_h\rangle_{K} +(-1)^{p}  \underset{F \in \mathcal{F}_o} \sum \langle \mathrm{tr}_q \nu^q_h\mid \hat{\beta}^p_h \rangle_F \\
&\ \ \ \ +(-1)^{p} \underset{F \in \mathcal{F}_i}\sum\langle \mathrm{tr}_q\nu^q_h\vert_L + \mathrm{tr}_q\nu^q_h\vert_R\mid \hat{\beta}^p_h\rangle_F,
\end{aligned}
\end{equation}
where $\hat{\beta}^q_h$ and $\hat{\beta}^p_h$ are given by \eqref{eq:flu} and \eqref{eq:flub} and $\nu^p_h \in E_p(K), \nu^q_h \in E_q(K)$.\\
Using \eqref{eq:hsrelation} along with \eqref{eq:discreteenergy}, the energy on each element of the system represented by the discrete Dirac structure \eqref{eq:energydirac} is given by 
\begin{equation}\label{eq:energyterm1}
\begin{aligned}
E_K = \langle \beta^p_h \mid \star_p\beta^p_h \mid  \rangle _K + \langle \beta^q_h \mid \star_q\beta^q_h \rangle _K.
\end{aligned}
\end{equation}
Choosing the port variables at each common face as stated in \eqref{eq:extport} and using Lemma \eqref{lem:interconnectionlemma1}, the interconnection between the elements $K \in \mathcal{T}_h$ given by \eqref{eq:inter}, is power preserving. Thus, the discrete energy of the total system is
 \begin{equation}\label{eq:energyterm2}
\begin{aligned}
E_h = \underset{K \in \mathcal{T}_h}\sum (\langle \beta^p_h\mid \star_p\beta^p_h \rangle _K + \langle \beta^q_h \mid \star_q\beta^q_h \rangle _K).
\end{aligned}
\end{equation}
\subsection{Energy Conservation}
\begin{theorem}[Energy Conservation]
Given an $n$-dimensional polyhedral oriented manifold $\Omega$ with Lipschitz continuous boundary $\partial \Omega$, discretized into a set of discontinuous elements $\mathcal{T}_h$. Let $\beta^p_h(t) \in E_p(\mathcal{T}_h), \ \beta^q_h(t) \in  E_q( \mathcal{T}_h)$ and
 $\hat{\beta}^p_h(t) \in E_p(\mathcal{F}_h),\ \hat{\beta}^q_h(t) \in E_q(\mathcal{F}_h)$ satisfy \eqref{eq:finalweak}. Let the elements be connected using the interconnection structure \eqref{eq:inter}, choosing the port variables and interconnection variables at each face as \eqref{eq:extport}. Let the total discrete energy of the discrete system $E_h$ be given by \eqref{eq:energyterm2}. Then the rate of change in discrete energy for the whole system is 
\begin{equation}\label{eq:stability}
\begin{aligned}
 \dot E_h &=2 \underset{F \in \mathcal{F}_o} {\sum} \Big(\langle y^q_h\mid u^p_h\rangle_F+  \langle u^q_h \mid y^p_h\rangle_F \Big).
 \end{aligned}
 \end{equation}
 \end{theorem}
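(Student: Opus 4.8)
The plan is to differentiate the discrete energy \eqref{eq:energyterm2} in time, substitute the semidiscrete scheme \eqref{eq:finalweak} tested against the discrete solution itself, and then use the elementwise integration-by-parts identity \eqref{eq:int1} together with the structure of the numerical fluxes \eqref{eq:flu}, \eqref{eq:flub} to collapse the whole right-hand side onto the external faces. First I would note that, since $C_p,C_q$ are time-independent and $\beta^p_h,\beta^q_h$ depend smoothly on $t$, and since each weighted form $g_{wp},g_{wq}$ in \eqref{eq:proinner} is symmetric (so the discrete Hodge stars of \eqref{eq:relationinner} are symmetric with respect to the duality product on $K$), differentiating the quadratic form \eqref{eq:energyterm2} gives
\[
\tfrac12\dot E_h=\sum_{K\in\mathcal{T}_h}\Big(\langle\beta^p_h\mid\star_p\dot\beta^p_h\rangle_K+\langle\beta^q_h\mid\star_q\dot\beta^q_h\rangle_K\Big).
\]
The right-hand side is exactly the left-hand side of \eqref{eq:finalweak} evaluated at the test functions $\nu^p_h=\beta^p_h$, $\nu^q_h=\beta^q_h$, so I would set those choices and add the two identities in \eqref{eq:finalweak}.

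After this substitution, $\tfrac12\dot E_h$ equals the sum of the volume terms $\sum_K\big(\langle\beta^q_h\mid d_{qp}\beta^p_h\rangle_K+(-1)^{p+1}\langle d_{pq}\beta^q_h\mid\beta^p_h\rangle_K\big)$ plus all the face terms. Applying \eqref{eq:int1} to the term $\langle d_{pq}\beta^q_h\mid\beta^p_h\rangle_K$ and using $p+q=n+1$, the volume contribution collapses to the pure element-boundary sum $(-1)^{p+1}\sum_K\langle\mathrm{tr}_q\beta^q_h\mid\mathrm{tr}_p\beta^p_h\rangle_{\partial K}$. I would then reorganize $\sum_K\langle\cdot\mid\cdot\rangle_{\partial K}$ face-by-face and combine it with the flux terms of \eqref{eq:finalweak}. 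On each internal face $F\in\mathcal{F}_i$ there are contributions from the two adjacent elements $K_L,K_R$; inserting the internal fluxes $\hat\beta^p_h=(1-\theta)\mathrm{tr}_p\beta^p_h\vert_L+\theta\,\mathrm{tr}_p\beta^p_h\vert_R$ and $\hat\beta^q_h=\theta\,\mathrm{tr}_q\beta^q_h\vert_L+(1-\theta)\mathrm{tr}_q\beta^q_h\vert_R$ from \eqref{eq:flu}, and using that the traces from $K_L$ and $K_R$ carry opposite induced orientations on $F$ (the sign convention already built into the choice \eqref{eq:choice1}), all internal-face terms cancel for every $\theta\in[0,1]$; this cancellation is precisely the power-preservation of the interconnection Dirac structure of Lemma \ref{lem:interconnectionlemma1}, which may alternatively be invoked directly in place of the explicit trace computation. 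On each external face $F\in\mathcal{F}_o$ I would insert $\hat\beta^p_h=\tfrac12\mathrm{tr}_p\beta^p_h-u^p_h$ and $\hat\beta^q_h=\tfrac12\mathrm{tr}_q\beta^q_h-u^q_h$ from \eqref{eq:flub}: the two $\tfrac12\mathrm{tr}$ pieces add up to $(-1)^p\langle\mathrm{tr}_q\beta^q_h\mid\mathrm{tr}_p\beta^p_h\rangle_F$, which cancels the $(-1)^{p+1}\langle\mathrm{tr}_q\beta^q_h\mid\mathrm{tr}_p\beta^p_h\rangle_F$ coming from the integration by parts, leaving only the terms containing $u^p_h$ and $u^q_h$. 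Identifying the boundary outputs $y^p_h,y^q_h$ with $\mathrm{tr}_p\beta^p_h,\mathrm{tr}_q\beta^q_h$ up to the sign convention of \eqref{eq:disdirac}, \eqref{eq:extport}, this residual is exactly $\sum_{F\in\mathcal{F}_o}\big(\langle y^q_h\mid u^p_h\rangle_F+\langle u^q_h\mid y^p_h\rangle_F\big)$, and multiplying by two yields \eqref{eq:stability}.

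I expect the main obstacle to be the orientation bookkeeping on the internal faces: making rigorous that the left- and right-element traces enter the reorganized boundary sum with opposite signs, so that the $\theta$-weighted fluxes produce an exact cancellation independently of $\theta$. The cleanest way to discharge this is to appeal to the already-established fact that the interconnection structure \eqref{eq:inter} is a Dirac structure (Theorem \ref{the:boustu}) and that the coupled structure is therefore power preserving (Lemma \ref{lem:interconnectionlemma1}), rather than re-deriving the cancellation through explicit trace algebra. A secondary, purely notational nuisance is tracking the $(-1)^p$ factors and the action of the conversion operator $Q$ when passing between the bilinear pairing $\langle\cdot\mid\cdot\rangle_{\partial K}$ and the duality pairing on $E_p(\partial K)^{\ast}\times E_p(\partial K)$ used in \eqref{eq:energybal}.
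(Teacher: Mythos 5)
Your proposal is sound and reaches the stated identity, but it is organized differently from the paper's argument. You test the assembled DG formulation \eqref{eq:finalweak} with $\nu^p_h=\beta^p_h$, $\nu^q_h=\beta^q_h$, apply the elementwise integration-by-parts identity \eqref{eq:int1}, and then cancel the interior-face contributions by explicit algebra with the $\theta$-weighted fluxes \eqref{eq:flu} (or, as you note, by citing Theorem \ref{the:boustu} and Lemma \ref{lem:interconnectionlemma1}). The paper never re-opens the integration by parts at this stage: after the same first step (symmetry of $g_{wp},g_{wq}$ giving $\dot E_K = 2\big(\langle\beta^p_h\mid\star_p\dot\beta^p_h\rangle_K+\langle\beta^q_h\mid\star_q\dot\beta^q_h\rangle_K\big)$), it invokes the already-proved elementwise power balance of the generalized Stokes--Dirac structure (Theorem \ref{the:elestu}, i.e.\ \eqref{eq:energybal} written in state-space variables) to convert $\dot E_K$ directly into the boundary-port pairing $2\big(\langle {y^p_h}^{\ast}\mid u^p_h\rangle+\langle {u^p_h}^{\ast}\mid y^p_h\rangle\big)$, then sums over elements using the power-preserving interconnection of Section \ref{sec:choiceinter} so that interior faces drop out, and finally uses the surjectivity of $Q$ (${y^p_h}^{\ast}=Q(y^q_h)$, ${u^p_h}^{\ast}=Q(u^q_h)$ together with \eqref{eq:operatordual1}) to land exactly on the face-wise pairings in \eqref{eq:stability}. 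Your route buys a self-contained computation at the level of the scheme actually implemented, and it makes explicit that the cancellation is independent of $\theta$; the paper's route buys immunity from precisely the two issues you flag, namely the orientation bookkeeping on interior faces (absorbed once and for all into the interconnection Dirac structure and the sign convention \eqref{eq:choice1}) and the $(-1)^p$/$Q$ bookkeeping at the external boundary, which in your sketch is compressed into ``identifying the boundary outputs up to the sign convention'' but in the paper is the explicit last step ($Q$-surjectivity) that produces the clean form of \eqref{eq:stability}. If you carry out your primary (explicit-flux) path, that final identification via \eqref{eq:disdirac} and \eqref{eq:operatordual1} is the place where the sign factors must be tracked carefully; your stated fallback to Lemma \ref{lem:interconnectionlemma1} is, in effect, the paper's own proof.
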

 \begin{proof}
 The energy on each element $K \in \mathcal{T}_h$ of the discretised system represented by the Stokes-Dirac structure \eqref{eq:energydirac} is given by \eqref{eq:energyterm1}.
The change in discrete energy on each element $K \in \mathcal{T}_h$ is 
  \begin{equation}
 \begin{aligned}
 \dot{E}_K &= \langle \beta^p_h \mid \star_p \dot{\beta}^p_h\rangle_K + \langle \dot{\beta}^p_h \mid \star_p \beta^p_h \rangle_K + \langle \dot{\beta}^q_h \mid \star_q \beta^q_h\rangle_K + \langle \beta^q_h \mid \star_q \dot{\beta}^q_h\rangle_K,
 \end{aligned}
 \end{equation}
 which using the symmetry of $g_{wp}$ and $g_{wq}$, given by \eqref{eq:proinner}, results in  
 \begin{equation}\label{eq:powerstatespace}
 \begin{aligned}
 \dot{E}_K &= 2 \Big(\langle \beta^p_h \mid \star_p \dot{\beta}^p_h \rangle_K + \langle \beta^q_h \mid \star_q \dot{\beta}^q_h\rangle_K\Big).
 \end{aligned}
 \end{equation}
Using \eqref{eq:r1} and \eqref{eq:hsrelation}, the duality product \eqref{eq:power12} can be written in terms of state space variables as  
\begin{equation}\label{eq:dualitystatespace}
\begin{aligned}
&\langle e^p_h\mid f^p_h\rangle_K + \langle e^q_h \mid f^q_h \rangle_K + \langle {y^p_h}^{\ast} \mid u^p_h\rangle_{{E_p(\partial K)}^{\ast} \times E_p(\partial K)} + \langle {u^p_h}^{\ast} \mid y^p_h\rangle_{{E_p(\partial K)}^{\ast} \times E_p(\partial K)} \\
&=- \langle \beta^p_h \mid \dot{\alpha}^p_h \rangle_K - \langle \beta^q_h \mid \dot{\alpha}^q_h\rangle_K + \langle {y^p_h}^{\ast} \mid u^p_h\rangle_{{E_p(\partial K)}^{\ast} \times E_p(\partial K)}\\
&\ \ \ \ + \langle {u^p_h}^{\ast} \mid y^p_h\rangle_{{E_p(\partial K)}^{\ast} \times E_p(\partial K)}\\
&= -\langle \beta^p_h \mid \star_p \dot{\beta}^p_h \rangle_K - \langle \beta^q_h \mid \star_q \dot{\beta}^q_h\rangle_K + \langle {y^p_h}^{\ast} \mid u^p_h\rangle_{{E_p(\partial K)}^{\ast} \times E_p(\partial K)}\\
&\ \ \ \ + \langle {u^p_h}^{\ast} \mid y^p_h\rangle_{{E_p(\partial K)}^{\ast} \times E_p(\partial K)}.
\end{aligned}
\end{equation}
Next, using the fact that \eqref{eq:energydirac}  with the duality product \eqref{eq:dualitystatespace} is a Dirac structure, Theorem 4.9 then gives 
\begin{equation}\label{eq:dualitystatespace1}
\begin{aligned}
 &-\langle \beta^p_h \mid \star_p \dot{\beta}^p_h \rangle_K - \langle \beta^q_h \mid \star_q \dot{\beta}^q_h\rangle_K + \langle {y^p_h}^{\ast} \mid u^p_h\rangle_{{E_p(\partial K)}^{\ast} \times E_p(\partial K)}\\
 &+ \langle {u^p_h}^{\ast} \mid y^p_h\rangle_{{E_p(\partial K)}^{\ast} \times E_p(\partial K)} = 0.
\end{aligned}
\end{equation}
Using \eqref{eq:dualitystatespace1}, \eqref{eq:powerstatespace} becomes 
 \begin{equation}
 \begin{aligned}
   \dot{E}_K &= 2\Big(\langle {y^p_h}^{\ast} \mid u^p_h\rangle_{{E_p(\partial K)}^{\ast} \times E_p(\partial K)} + \langle {u^p_h}^{\ast} \mid y^p_h\rangle_{{E_p(\partial K)}^{\ast} \times E_p(\partial K)} \Big).
   \end{aligned}
   \end{equation}
   We have already proved that with the power preserving interconnection Dirac structure \eqref{eq:inter} and interconnection variables \eqref{eq:extport}, we can add all energy contributions from the elements and obtain, since the contributions from the internal faces $F \in \mathcal{F}_i$ cancel, see Section 5,
   \begin{equation}
   \begin{aligned}
     \dot E_h &= 2 \underset{F \in \mathcal{F}_o} {\sum} \Big(\langle {y^p_h}^{\ast} \mid u^p_h\rangle_{{E_p(F)}^{\ast} \times E_p(F)}+  \langle {u^p_h}^{\ast} \mid y^p_h\rangle_{{E_p(F)}^{\ast} \times E_p(F)} \Big).
     \end{aligned}
     \end{equation}
     As we have assumed that the operator $Q : E_q (\partial K) \rightarrow {E_p(\partial K)}^{\ast}$, stated in \eqref{eq:operatordual1}, is surjective there exist $y^q_h,u^q_h \in E_q(\partial K)$ such that ${y^p_h}^{\ast} = Q(y^q_h)$ and ${u^p_h}^{\ast} = Q(u^q_h)$, thus 
        \begin{equation}
   \begin{aligned}
     \dot E_h &=2 \underset{F \in \mathcal{F}_o} {\sum} \Big(\langle Q(y^q_h) \mid u^p_h\rangle_{{E_p(F)}^{\ast} \times E_p(F)}+  \langle Q(u^q_h) \mid y^p_h\rangle_{{E_p(F)}^{\ast} \times E_p(F)} \Big)\\
      &=2\underset{F \in \mathcal{F}_o} {\sum} \Big(\langle y^q_h \mid u^p_h\rangle_F+  \langle u^q_h \mid y^p_h\rangle_F \Big).
     \end{aligned}
     \end{equation}
 \end{proof}
 The change in discrete energy for the whole system thus only depends on the energy input-output through the domain boundaries. The choice of proper boundary conditions, or in other words the choice of suitable external input-output boundary port variables for the system, is therefore crucial for the stability of the system. 
 
\section{Error Analysis}\label{sec:error}
In this section we will state an a priori error estimate for the port-Hamiltonian discontinuous Galerkin discretization presented in this paper. For quick reference we summarize in Tables \ref{tab:tab4} and \ref{tab:tab5} the choices for the dual pairs of the spaces $F_q(K), E_q(K)$ and $F_p(K), E_p(K)$ for Cases 1 and 2, respectively. More details can be found in Sections \ref{sec:disspace}, \ref{sec:disdualityelement} and \ref{sec:disdualityboundary}.
\begin{table}[H]
{\footnotesize
\caption{Table 1: Overview of polynomial differential form spaces for Case 1. Note, $p+q = n+1$ and the spaces $E_q, F_q$ and $E_p, F_p$ are dual pairs of polynomial differential form spaces}
\label{tab:tab4}
\begin{center}
\begin{tabular}{|c|c|}

\hline
$E_q(K)$ &$\mathcal{P}_{r+1}\Lambda^{n-q}(K)$\\
\hline
 $F_q(K)$&$\underset{f \in \Delta(K), \mathrm{\dim}(f) \in [n-q,n-q+r]}\oplus\mathcal{P}^{-}_{r+1+n-q-\mathrm{\dim}(f)}\Lambda^{\mathrm{\dim}(f)-n+q}(f)$\\[1ex]
\hline
\hline
$E_p(K)$ & $\underset{f \in \Delta(K), \mathrm{\dim}(f)\in [p,p+r-1]}\oplus\mathcal{P}^{-}_{r+p-\mathrm{\dim}(f)}\Lambda^{\mathrm{\dim}(f)-p}(f)$ \\[1ex]
\hline
$F_p(K)$&$\mathcal{P}_r\Lambda^p(K)$\\ [1ex]
\hline
\end{tabular}
\end{center}
}
\end{table}
\begin{table}[H]
{\footnotesize
\caption{Table 2: Overview of polynomial differential form spaces for Case 2. Note, $p+q=n+1$ and the spaces $E_q, F_q$ and $E_p, F_p$ are dual pairs of polynomial differential form spaces}
\label{tab:tab5}
\begin{center}
\begin{tabular}{|c|c|}
\hline
$E_q(K)$ &$\mathcal{P}^{-}_{r}\Lambda^{n-q}(K)$\\
\hline
 $F_q(K)$&$\underset{f \in \Delta(K), \mathrm{\dim}(f)\in[n-q,n-q+r-1]}\oplus\mathcal{P}_{r+n-q-\mathrm{\dim}(f)-1}\Lambda^{\mathrm{\dim}(f)-n+q}(f)$\\[1ex]
\hline
\hline
$E_p(K)$ &$\underset{f \in \Delta(K), \mathrm{\dim}(f)\in[p,p+r-1]}\oplus\mathcal{P}_{r+p-\mathrm{\dim}(f)-1}\Lambda^{\mathrm{\dim}(f)-p}(f)$\\[1ex]
\hline
$F_p(K)$&$\mathcal{P}^{-}_r\Lambda^p(K)$\\ [1ex]
\hline
\end{tabular}
\end{center}
}
\end{table}
The related discontinuous Galerkin finite element spaces $F_p(\mathcal{T}_h), F_q(\mathcal{T}_h), E_p(\mathcal{T}_h)$ and $E_q(\mathcal{T}_h)$ are stated in \eqref{eq:discontinuousspace}.\\
 Using the trace operator $\mathrm{tr}$ we define the spaces $E_p(\mathcal{F}_h)$ and $E_q(\mathcal{F}_h)$ on the boundaries of each element in the discretized polyhedral manifold as 
 \begin{equation}
 \begin{aligned}
 E_p(\mathcal{F}_h) &:= \{\lambda^p_h \in L^2\Lambda^{n-p}(\Omega)\mid \lambda^p_h\vert_{\partial K} \in E_p(\partial K) \quad \forall K \in \mathcal{T}_h\},\\
 E_q(\mathcal{F}_h) &:= \{\lambda^q_h \in L^2\Lambda^{n-q}(\Omega) \mid \lambda^q_h\vert_{\partial K} \in E_q(\partial K) \quad \forall K \in \mathcal{T}_h\}.
 \end{aligned}
 \end{equation}
 Define the discontinuous Galerkin operators $\mathcal{D}_1$ and $\mathcal{D}_2$ as 
\begin{subequations}\label{eq:disoperator}
\begin{align}
&\mathcal{D}_1 : (H \Lambda^{n-q}(\Omega)+E_q(\mathcal{T}_h)) \times E_p(\mathcal{T}_h) \times (H^{-1/2}\Lambda^{n-q}(\mathcal{F}_h) +E_q(\mathcal{F}_h))\rightarrow \mathbb{R},\nonumber\\
&\mathcal{D}_1(u^q,\nu^p_h;\hat{u}^q)= -\sum_{K \in \mathcal{T}_h}  \langle u^q \mid d_{qp}\nu^p_h \rangle_K \label{eq:dop1}\\
& \ \ \ \ \ \ \ \ \ \ \ \ \ \ \ \ \ \ \ \ \ \ \ \ \ \ +(-1)^{p+1} \sum_{F \in \mathcal{F}_i} \langle \hat{u}^q \mid {\mathrm{tr}_p\nu^p_h}\vert_L + {\mathrm{tr}_p\nu^p_h}\vert_R  \rangle_{F}\nonumber \\
& \ \ \ \ \ \ \ \ \ \ \ \ \ \ \ \ \ \ \ \ \ \ \ \ \ \ +(-1)^{p+1}\sum_{F \in \mathcal{F}_o} \langle \hat{u}^q \mid {\mathrm{tr}_p\nu^p_h}\rangle_{F},\nonumber\\
&\mathcal{D}_2 : (H^1 \Lambda^{n-p}(\Omega)+E_p(\mathcal{T}_h)) \times E_q(\mathcal{T}_h) \times (H^{1/2}\Lambda^{n-p}(\mathcal{F}_h) + E_p(\mathcal{F}_h))\rightarrow \mathbb{R},\nonumber\\
&\mathcal{D}_2(v^p,\nu^q_h;\hat{v}^p)=  (-1)^{p}\sum_{K \in \mathcal{T}_h}\langle  d_{pq} \nu^q_h \mid v^p\rangle_K\label{eq:dop2}\\
& \ \ \ \ \ \ \ \ \ \ \ \ \ \ \ \ \ \ \ \ \ \ \ \ \ \ +(-1)^{p+1} \sum_{F \in \mathcal{F}_i} \langle{\mathrm{tr}_q\nu^q_h}\vert_L + {\mathrm{tr}_q\nu^q_h}\vert_R \mid \hat{v}^p \rangle_{F}\nonumber \\
& \ \ \ \ \ \ \ \ \ \ \ \ \ \ \ \ \ \ \ \ \ \ \ \ \ \ +(-1)^{p+1}\sum_{F \in \mathcal{F}_o} \langle{\mathrm{tr}_q\nu^q_h} \mid \hat{v}^p \rangle_{F}\nonumber.
\end{align}
\end{subequations}
\begin{lemma}[Energy Conservation]\label{lem:stab}
The discontinuous Galerkin (DG) operators $\mathcal{D}_1$ and $\mathcal{D}_2$, as defined in \eqref{eq:disoperator}, for $\beta^p_h \in E_p(\mathcal{T}_h)$, $\beta^q_h  \in E_q(\mathcal{T}_h)$ and external input $u^p_h, y^p_h \in  E_p(\mathcal{F}_h)$, $u^q_h, y^q_h \in E_q(\mathcal{F}_h)$ satisfy the following relation 
\begin{equation}
\begin{aligned}
&\mathcal{D}_1(\beta^q_h,\beta^p_h; \hat{\beta}^q_h) + \mathcal{D}_2(\beta^p_h,\beta^q_h; \hat{\beta}^p_h) = - \underset{F \in \mathcal{F}_o} {\sum} \Big(\langle y^q_h \mid u^p_h\rangle_F+  \langle u^q_h \mid y^p_h\rangle_F \Big),
\end{aligned}
\end{equation}
where $p + q = n + 1$.
\end{lemma}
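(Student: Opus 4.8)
The plan is to verify the identity directly from the definitions \eqref{eq:dop1}--\eqref{eq:dop2}, using the integration-by-parts rule \eqref{eq:int1} to move the exterior derivative off the test slot and then collecting the resulting element-boundary terms face by face with the numerical fluxes \eqref{eq:flu}--\eqref{eq:flub}. Writing out $\mathcal{D}_1(\beta^q_h,\beta^p_h;\hat\beta^q_h)+\mathcal{D}_2(\beta^p_h,\beta^q_h;\hat\beta^p_h)$, the two volume contributions are $-\sum_K\langle\beta^q_h\mid d_{qp}\beta^p_h\rangle_K$ and $(-1)^p\sum_K\langle d_{pq}\beta^q_h\mid\beta^p_h\rangle_K$. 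Applying \eqref{eq:int1} on each $K$ with $e^q_h=\beta^q_h$, $\nu^p_h=\beta^p_h$ turns the second into $\sum_K\langle\beta^q_h\mid d_{qp}\beta^p_h\rangle_K+(-1)^p\sum_K\langle\mathrm{tr}_q\beta^q_h\mid\mathrm{tr}_p\beta^p_h\rangle_{\partial K}$, so the two volume terms cancel and only $(-1)^p\sum_K\langle\mathrm{tr}_q\beta^q_h\mid\mathrm{tr}_p\beta^p_h\rangle_{\partial K}$ remains, together with the numerical-flux face terms of $\mathcal{D}_1$ and $\mathcal{D}_2$.

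Next I would split $\sum_K\langle\cdot\rangle_{\partial K}$ into a sum over interior faces $\mathcal{F}_i$, each shared by a ``left'' and a ``right'' element with the two induced orientations, and a sum over exterior faces $\mathcal{F}_o$. On an interior face $F$ the surviving $(-1)^p$-term contributes the two single-sided traces, while the flux terms of $\mathcal{D}_1,\mathcal{D}_2$ contribute, via \eqref{eq:flu}, the combinations $\theta\,\mathrm{tr}_q\beta^q_h|_L+(1-\theta)\mathrm{tr}_q\beta^q_h|_R$ paired with $\mathrm{tr}_p\beta^p_h|_L+\mathrm{tr}_p\beta^p_h|_R$ and the symmetric one with $p\leftrightarrow q$ and weights $1-\theta,\theta$. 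Expanding these and using that the two element-sided contributions of $\sum_K\langle\cdot\rangle_{\partial K}$ on $F$ enter with opposite induced orientations, all interior-face terms cancel identically for every $\theta\in[0,1]$; this is precisely the algebraic content of the power-preservation of the interconnection Dirac structure (Theorem \ref{the:boustu}, Lemma \ref{lem:interconnectionlemma1}).

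On an exterior face $F\subset\partial K$ the remaining terms are $(-1)^p\langle\mathrm{tr}_q\beta^q_h\mid\mathrm{tr}_p\beta^p_h\rangle_F$ together with $(-1)^{p+1}\langle\hat\beta^q_h\mid\mathrm{tr}_p\beta^p_h\rangle_F$ and $(-1)^{p+1}\langle\mathrm{tr}_q\beta^q_h\mid\hat\beta^p_h\rangle_F$. Substituting $\hat\beta^p_h=\tfrac12\mathrm{tr}_p\beta^p_h-u^p_h$ and $\hat\beta^q_h=\tfrac12\mathrm{tr}_q\beta^q_h-u^q_h$ from \eqref{eq:flub}, the terms quadratic in the traces cancel and there remain only terms linear in $u^p_h,u^q_h$; inserting the output relations of the generalized Stokes--Dirac structure \eqref{eq:disdirac}/\eqref{eq:energydirac}, namely $y^p_h=-(-1)^p\mathrm{tr}_p\beta^p_h$ and $Q(y^q_h)=-Q(\mathrm{tr}_q\beta^q_h)$, and using \eqref{eq:operatordual1} to pass between $Q(\cdot)$ and the boundary pairing $\langle\cdot\mid\cdot\rangle_F$, this collapses to $-\big(\langle y^q_h\mid u^p_h\rangle_F+\langle u^q_h\mid y^p_h\rangle_F\big)$. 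Summing over $F\in\mathcal{F}_o$ yields the claim.

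The main obstacle is the orientation and wedge-anticommutativity bookkeeping: one must check that the two element-sided contributions to $\sum_K\langle\cdot\rangle_{\partial K}$ on an interior face enter with opposite signs (otherwise the $\theta$-dependent cross terms survive), and that on exterior faces the transpositions $\langle a\mid b\rangle_F=\pm\langle b\mid a\rangle_F$ carry the correct powers of $-1$, so that the surviving boundary term reproduces $-\big(\langle y^q_h\mid u^p_h\rangle_F+\langle u^q_h\mid y^p_h\rangle_F\big)$ with exactly the orientation used in the energy-conservation theorem of Section \ref{sec:Energy stability}. As a shortcut, if $\beta^p_h,\beta^q_h$ together with the fluxes also satisfy \eqref{eq:finalweak}, then comparing \eqref{eq:dop1}--\eqref{eq:dop2} with \eqref{eq:finalweak} gives immediately $\mathcal{D}_1(\beta^q_h,\beta^p_h;\hat\beta^q_h)=-\sum_K\langle\beta^p_h\mid\star_p\dot\beta^p_h\rangle_K$ and $\mathcal{D}_2(\beta^p_h,\beta^q_h;\hat\beta^p_h)=-\sum_K\langle\beta^q_h\mid\star_q\dot\beta^q_h\rangle_K$, and the conclusion follows from \eqref{eq:powerstatespace} and that theorem.
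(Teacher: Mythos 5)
Your main argument is correct in outline but takes a genuinely different route from the paper. The paper's own proof of Lemma \ref{lem:stab} is a one-line reduction: comparing \eqref{eq:disoperator} with the DG formulation \eqref{eq:energyweak}, the quantity $\mathcal{D}_1(\beta^q_h,\beta^p_h;\hat\beta^q_h)+\mathcal{D}_2(\beta^p_h,\beta^q_h;\hat\beta^p_h)$ is minus the sum of the elementwise boundary power terms, and then the ``same steps as in the proof of the energy conservation theorem'' of Section \ref{sec:Energy stability} -- the elementwise power conservation of the generalized Stokes--Dirac structure (Theorem \ref{the:elestu}) together with the power-preserving interconnection (Theorem \ref{the:boustu}, Lemma \ref{lem:interconnectionlemma1}) -- cancel all interior-face contributions and leave only the external boundary pairings. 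You instead unwind that abstract argument into an explicit computation: \eqref{eq:int1} cancels the two volume terms, the interior fluxes \eqref{eq:flu} cancel face by face for every $\theta$, and the exterior-face algebra with \eqref{eq:flub} plus the output relations of \eqref{eq:disdirac} produces the right-hand side. This is sound, and you correctly isolate the one point where it can fail: the interior-face cancellation holds only if the two element-sided contributions enter with opposite induced orientations, since with a single orientation the $\theta$-dependent cross terms $\theta\langle\mathrm{tr}_q\beta^q_h\vert_L\mid\mathrm{tr}_p\beta^p_h\vert_R\rangle_F$ and $(1-\theta)\langle\mathrm{tr}_q\beta^q_h\vert_R\mid\mathrm{tr}_p\beta^p_h\vert_L\rangle_F$ survive; this is exactly the sign bookkeeping the paper buries inside the interconnection choice \eqref{eq:choice1}--\eqref{eq:extport}. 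Your explicit route buys independence from the dynamics and makes the $\theta$-independence transparent, at the cost of having to carry out that orientation check; the paper's route buys brevity and lets the already-proved Dirac-structure lemmas absorb the signs. One caveat: your ``shortcut'' coincides with the paper's proof, but as phrased it assumes $\beta^p_h,\beta^q_h$ satisfy \eqref{eq:finalweak}, i.e.\ are DG solutions; the lemma makes no such assumption and is later applied in \eqref{eq:norm3} to projected errors, which do not satisfy the scheme, so the shortcut alone would not cover the lemma as it is used -- one must invoke the elementwise power identity of Theorem \ref{the:elestu} for arbitrary elements of the Dirac structure (or your direct computation, which never uses the time evolution).
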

\begin{proof}
The proof follows from \eqref{eq:energyweak} with numerical fluxes \eqref{eq:flu}-\eqref{eq:flub} and uses the same steps as in the proof of Theorem 7.1.\\
\end{proof}
\begin{definition}[Canonical Projection Operator $\pi^l_h$]\label{def:pro}%
Let $K \in \mathcal{T}_h$ be a simplex and let $\Delta(K)$ denote the set of all subsimplices $f$ of $K$ with $\mathrm{\dim}(f)\ge l$. For $r \in \mathbb{N}$, $r \ge 1$, the projection operator $\pi^l_h$ is the mapping, (Section 4.9 , \cite{arnold200finite}),
\begin{equation}\label{eq:ccp1}
\begin{aligned}
\pi^l_{h} : C^0\Lambda^l(K) &\rightarrow \mathcal{P}_r\Lambda^l(K),\\
\end{aligned}
\end{equation}
satisfying, for all $\omega^l\in C^0\Lambda^l(K)$
\begin{equation}\label{eq:ccp2}%
\begin{aligned}%
\int_{f} \mathrm{tr}_{K,f}(\omega^l - \pi^l_{h}\omega^l) \wedge \eta &= 0, \quad \forall \eta \in \mathcal{P}^{-}_{r+l-\dim(f)}\Lambda^{\dim(f)-l}(f), \quad \forall f \in \Delta(K).
\end{aligned}
\end{equation}
\end{definition}%
\begin{definition}[Canonical Projection Operator $\pi^l_{h-}$]\label{def:pro1}%
 Let $K \in \mathcal{T}_h$ be a simplex and let $\Delta(K)$ denote the set of all subsimplices $f$ of $K$ with $\mathrm{\dim}(f)\ge l$. For $r \in \mathbb{N}$, $r \ge 1$ , the projection operator $\pi^l_{h-}$ is the mapping, Section 4.9, \cite{arnold200finite}, 
\begin{equation}\label{eq:ccp3}
\begin{aligned}
\pi^l_{h-} : C^0\Lambda^l(K) &\rightarrow \mathcal{P}^{-}_r\Lambda^l(K),
\end{aligned}
\end{equation}
satisfying, for all $\omega^l \in C^0\Lambda^l(K)$
\begin{equation}\label{eq:ccp4}%
\begin{aligned}%
&\int_{f} \mathrm{tr}_{K,f}(\omega^l - \pi^l_{h-}\omega^l) \wedge \eta &= 0, \quad \forall \eta \in \mathcal{P}_{r+l-\mathrm{\dim}(f)-1}\Lambda^{\mathrm{\dim}(f)-l}(f), \quad \forall f \in \Delta(K).
\end{aligned}
\end{equation}
\end{definition}
\subsection{Interpolation Error Bounds}
The canonical projections stated in Definitions \ref{def:pro} and \ref{def:pro1} depend on traces on the subsimplices. This requires function spaces with more regularity than is available in the spaces $\mathcal{F}_{p,q}$ and $\mathcal{E}_{p,q}$, \eqref{eq:spacea} and \eqref{eq:spacec}, which are used in the definition of the Stokes-Dirac structure stated in Section 3.2. This can be remediated by modifying the canonical projections into smoothed projections, which we also denote as $\pi^l_h$ and $\pi^l_{h,-}$. The details of the construction of these smoothed projections can be found in Section 5.4, \cite{arnold200finite} and Section 5.5, \cite{arnold2006finite}. Based on Theorem 5.9, \cite{arnold2006finite}, we can state the following lemmas
\begin{lemma}[Interpolation Error Bounds]\label{lem:error}
 Let $\pi^l_{h} : L^2\Lambda^l(\Omega) \rightarrow \mathcal{P}_r\Lambda^l(\mathcal{T}_h), r \ge 1$ and $\pi^l_{h,-} : L^2\Lambda^l(\Omega) \rightarrow \mathcal{P}^{-}_{r+1}\Lambda^l(\mathcal{T}_h), r \ge 0$ be, respectively, the smoothed projection operators based on the canonical projections stated by Definitions \ref{def:pro} and \ref{def:pro1}. Then $\pi^l_{h}$ is a projection onto $\mathcal{P}_r\Lambda^l(\mathcal{T}_h)$ for $r \ge 1$ and satisfies
\begin{equation}\label{eq:bound11}
\begin{aligned}
|| \omega - \pi^l_{h} \omega||_{L^2\Lambda^l(\Omega)} \leq ch^s||\omega||_{H^s\Lambda^l(\Omega)} , \quad \omega \in H^s\Lambda^l(\Omega),
\end{aligned}
\end{equation}
for $0 \leq s\leq r+1$. Moreover, for all $\omega \in L^2\Lambda^l(\Omega)$, $ \pi^l_{h} \omega \rightarrow \omega$ in $L^2\Lambda^l(\Omega)$ as $h\rightarrow0$ and $d\pi^{l-1}_h=\pi^{l}_hd$. Similarly, $\pi^l_{h,-}$ is a projection onto $\mathcal{P}_{r+1}\Lambda^l(\mathcal{T}_h)$ for $r \ge 0$ and also satisfies \eqref{eq:bound11} for $0 \leq s \leq r+1$. Moreover, for all $\omega \in L^2\Lambda^l(\Omega)$, $ \pi^l_{h,-} \omega \rightarrow \omega$ in $L^2\Lambda^l(\Omega)$ as $h\rightarrow0$ and $d\pi^{l-1}_{h,-}=\pi^{l}_{h,-}d$.
\end{lemma}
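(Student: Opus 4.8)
The plan is to obtain Lemma~\ref{lem:error} as a consequence of the theory of \emph{smoothed commuting projections} developed in Section~5 of \cite{arnold2006finite} and Section~5.4 of \cite{arnold200finite}, specialized to the shape-regular simplicial tessellation $\mathcal{T}_h$ of the Lipschitz polyhedral domain $\Omega$. First I would recall the construction. The canonical projections $\pi^l_h$ and $\pi^l_{h,-}$ of Definitions~\ref{def:pro} and~\ref{def:pro1} commute with the exterior derivative, $d\pi^{l-1}_h = \pi^l_h d$ and $d\pi^{l-1}_{h,-} = \pi^l_{h,-} d$; this follows directly from the defining moment conditions \eqref{eq:ccp2} and \eqref{eq:ccp4} together with the generalized Stokes theorem \eqref{eq:prop1c} applied subsimplex-by-subsimplex, since integrating $\mathrm{tr}_{K,f}(d\omega)\wedge\eta$ by parts on $f$ produces a boundary term and an interior term $\mathrm{tr}_{K,f}\omega\wedge d\eta$, both of which are degrees of freedom of the lower-degree spaces because $d\eta$ stays inside the complexes \eqref{eq:rham1}, \eqref{eq:negcomplex1}. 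These canonical projections are, however, only defined on forms with trace regularity (e.g.\ $C^0\Lambda^l$). As in the cited references, the remedy is to precompose with a regularization operator $R_\epsilon$ (a mollification after pulling back onto a slightly shrunk copy of $\Omega$) that itself commutes with $d$ and has smoothing parameter $\epsilon\sim h$, and then correct the composition so that it remains a projection onto the finite element space; this yields bounded operators $\pi^l_h : L^2\Lambda^l(\Omega)\to\mathcal{P}_r\Lambda^l(\mathcal{T}_h)$ (for $r\ge1$) and $\pi^l_{h,-} : L^2\Lambda^l(\Omega)\to\mathcal{P}^-_{r+1}\Lambda^l(\mathcal{T}_h)$ (for $r\ge0$), still satisfying the commuting relations, with operator norm bounded uniformly in $h$.

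Granting these three properties --- uniform $L^2$-stability, the commuting (cochain) property, and the projection property --- the estimate \eqref{eq:bound11} is a standard Bramble--Hilbert argument. The crucial observation is that both $\mathcal{P}_r\Lambda^l(K)$ and $\mathcal{P}^-_{r+1}\Lambda^l(K)$ contain the full space of polynomial $l$-forms of degree at most $r$ on each element $K$ (using $\mathcal{P}_r\Lambda^l\subset\mathcal{P}^-_{r+1}\Lambda^l$), so the projection reproduces such polynomials. On an element $K$ of diameter $h_K$ I would combine the local $L^2$-stability of $\pi^l_h$ over a macro-patch $\widetilde K$ (the smoothing is nonlocal, so the stability bound must involve a neighbourhood of $K$) with the Deny--Lions lemma and a scaling to the reference simplex to get $\|\omega - \pi^l_h\omega\|_{L^2\Lambda^l(K)} \le c\,h_K^{s}\,\|\omega\|_{H^s\Lambda^l(\widetilde K)}$ for $0\le s\le r+1$; shape regularity bounds the overlap of the patches $\widetilde K$ uniformly, so summing over $K\in\mathcal{T}_h$ gives \eqref{eq:bound11}. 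The identical argument applies to $\pi^l_{h,-}$, since the polynomial reproduction used is the same.

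For the convergence statement, I would use that $C^\infty\Lambda^l(\Omega)$ (hence $H^s\Lambda^l(\Omega)$) is dense in $L^2\Lambda^l(\Omega)$: given $\omega\in L^2\Lambda^l(\Omega)$ and $\delta>0$, choose a smooth $\omega_\delta$ with $\|\omega-\omega_\delta\|_{L^2}<\delta$, and estimate $\|\omega-\pi^l_h\omega\|_{L^2}\le\|\omega-\omega_\delta\|_{L^2}+\|\omega_\delta-\pi^l_h\omega_\delta\|_{L^2}+\|\pi^l_h(\omega_\delta-\omega)\|_{L^2}$. The first and third terms are bounded by $(1+\|\pi^l_h\|)\delta$ via uniform stability, and the middle term tends to $0$ as $h\to0$ by \eqref{eq:bound11} (applied with $s=r+1\ge1$). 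Hence $\pi^l_h\omega\to\omega$ in $L^2\Lambda^l(\Omega)$, and likewise $\pi^l_{h,-}\omega\to\omega$.

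I expect the main obstacle to lie entirely in the construction of the smoothed projection, not in any of the above steps: arranging that the regularization commutes with $d$ while remaining local enough for the macro-patches $\widetilde K$ to have bounded overlap, and $L^2$-bounded uniformly in $h$, and then correcting the composition to restore the projection property on the finite element space. For this I would rely on the constructions in Section~5.4 of \cite{arnold200finite} and Section~5 of \cite{arnold2006finite} rather than reproducing them, observing only that their standing hypotheses --- a shape-regular family of simplicial meshes on a Lipschitz polyhedral domain --- are precisely what has been assumed for $\mathcal{T}_h$ in Section~\ref{sec:error}.
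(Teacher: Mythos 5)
Your proposal matches the paper's treatment: the paper gives no proof of Lemma \ref{lem:error}, but simply invokes the smoothed commuting projections of Section 5.4 of \cite{arnold200finite} and Sections 5.4--5.5 together with Theorem 5.9 of \cite{arnold2006finite}, which is exactly the construction you rely on. Your additional details (polynomial reproduction via $\mathcal{P}_r\Lambda^l\subset\mathcal{P}^-_{r+1}\Lambda^l$, the Bramble--Hilbert/scaling argument with bounded patch overlap, and the density argument for $L^2$ convergence) are correct and simply spell out what the cited theorem already provides.
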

\begin{lemma}\label{lem:consistency}
The discontinuous Galerkin discretization operators $\mathcal{D}_1$ and $\mathcal{D}_2$, as defined in \eqref{eq:disoperator} for $ \beta^p \in H^1\Lambda^{n-p}(\Omega), \hat{\beta}^p \in H^{\frac{1}{2}}\Lambda^{n-p}(\mathcal{F}_h), \beta^q \in H\Lambda^{n-q}(\Omega), \hat{\beta}^q \in H^{-\frac{1}{2}}\Lambda^{n-q}(\mathcal{F}_h),\nu^p_h \in E_p(\mathcal{T}_h), \nu^q_h \in E_q(\mathcal{T}_h)$, along with the smoothed canonical projection operators defined in Definitions \ref{def:pro} and \ref{def:pro1}, satisfy
\begin{subequations}
\begin{equation}\label{eq:operatordef1}
\begin{aligned}
\mathcal{D}_1(\beta^q-\pi^{n-q}_h\beta^q, \nu^p;\beta^q-\pi^{n-q}_h\beta^q)&=0,\\
\end{aligned}
\end{equation}
\begin{equation}\label{eq:operatordef2}
\begin{aligned}
\mathcal{D}_2(\beta^p-\pi^{n-p}_{h-}\beta^p, \nu^q;\beta^p-\pi^{n-p}_{h-}\beta^p)&=0.
\end{aligned}
\end{equation}
\end{subequations}
\end{lemma}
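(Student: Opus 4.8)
The plan is a term-by-term consistency argument that exploits only the defining moment identities of the canonical projections. Set $\eta^q := \beta^q - \pi^{n-q}_h\beta^q$ and $\eta^p := \beta^p - \pi^{n-p}_{h-}\beta^p$ for the two projection errors. Since one works throughout with the \emph{smoothed} projections (as already flagged in the paragraph before Lemma \ref{lem:error}), the traces $\mathrm{tr}_{K,f}\eta^q$ and $\mathrm{tr}_{K,f}\eta^p$ are well defined on every subsimplex $f \in \Delta(K)$ for $\beta^q \in H\Lambda^{n-q}(\Omega)$ and $\beta^p \in H^1\Lambda^{n-p}(\Omega)$; and because $\pi^{n-q}_h\eta^q = 0$ and $\pi^{n-p}_{h-}\eta^p = 0$, the orthogonality relations \eqref{eq:ccp2} and \eqref{eq:ccp4} say exactly that, on each $f \in \Delta(K)$ of admissible dimension, $\int_f \mathrm{tr}_{K,f}\eta^q \wedge \eta = 0$ for every $\eta$ in the $\mathcal{P}^{-}$-type moment space attached to $f$, and $\int_f \mathrm{tr}_{K,f}\eta^p \wedge \eta = 0$ for every $\eta$ in the $\mathcal{P}$-type moment space attached to $f$. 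The whole proof reduces to checking that each integral generated by $\mathcal{D}_1(\eta^q,\nu^p_h;\eta^q)$ and $\mathcal{D}_2(\eta^p,\nu^q_h;\eta^p)$ is precisely of this form.

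Consider $\mathcal{D}_1$ in Case 1 (Case 2 is verbatim with Table \ref{tab:tab4} and Table \ref{tab:tab5}, and Definition \ref{def:pro} and Definition \ref{def:pro1}, interchanged). In the volume term of \eqref{eq:dop1} the $E_q$--$F_q$ duality product \eqref{eq:duality3} gives $\langle \eta^q \mid d_{qp}\nu^p_h\rangle_K = \sum_{f \in \Delta(K)} \int_f \mathrm{tr}_{K,f}\eta^q \wedge d(\nu^p_{h,f})$; since $\nu^p_{h,f} \in E_p(f) = \mathcal{P}^{-}_{r+p-\dim(f)}\Lambda^{\dim(f)-p}(f)$ its exterior derivative lies in $\mathcal{P}^{-}_{r+p-\dim(f)}\Lambda^{\dim(f)-p+1}(f)$, which by $p+q = n+1$ is exactly $F_q(f) = \mathcal{P}^{-}_{r+1+(n-q)-\dim(f)}\Lambda^{\dim(f)-(n-q)}(f)$ --- the moment space of $\pi^{n-q}_h$ on $f$ --- so each term vanishes. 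In the face terms, regrouping $\sum_{F \in \mathcal{F}_i}$ and $\sum_{F \in \mathcal{F}_o}$ into $\sum_{K \in \mathcal{T}_h}\langle \mathrm{tr}_q\eta^q \mid \mathrm{tr}_p\nu^p_h\rangle_{\partial K}$ (each face feeding the boundary products of its adjacent element or elements) and expanding via \eqref{eq:boudual}, every summand is $\int_{g}\mathrm{tr}_{K,g}\eta^q \wedge \mathrm{tr}_{f,g}\nu^p_{h,f}$ with $g = \partial f$ and $f \in \Delta(K)$, $\dim(f)\in[p,r+p-1]$. Here $\mathrm{tr}_{f,g}\nu^p_{h,f} \in \mathcal{P}^{-}_{r+p-\dim(f)}\Lambda^{\dim(f)-p}(g)$, and with $\dim(g)=\dim(f)-1$ and $p-1=n-q$ this space equals $\mathcal{P}^{-}_{r+p-\dim(g)-1}\Lambda^{\dim(g)-(n-q)}(g) \subset \mathcal{P}^{-}_{r+1+(n-q)-\dim(g)}\Lambda^{\dim(g)-(n-q)}(g)$ by $\mathcal{P}^{-}_{s-1}\subset\mathcal{P}^{-}_s$, again the moment space of $\pi^{n-q}_h$ on the subsimplex $g \in \Delta(K)$ (which satisfies $\dim(g)\ge n-q$). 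Hence every term vanishes and \eqref{eq:operatordef1} holds.

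The proof of \eqref{eq:operatordef2} is the mirror image: with $\mathcal{D}_2$ from \eqref{eq:dop2}, the duality products \eqref{eq:dualpro1} and \eqref{eq:dualpro2}, the trimmed projection $\pi^{n-p}_{h-}$, and its moment identity \eqref{eq:ccp4} (which involves $\mathcal{P}$-type rather than $\mathcal{P}^{-}$-type spaces), using $\mathcal{P}_{s-1}\subset\mathcal{P}_s$ for the face contributions; each summand again tests $\eta^p$, on a subsimplex, against a polynomial form lying in the corresponding moment space of Definition \ref{def:pro1}, so it vanishes. I expect the main obstacle to be exactly this degree and form-degree bookkeeping: one must verify that, after the degree shift making the image of $\pi^{n-q}_h$ coincide with $E_q(\mathcal{T}_h)$ (resp. of $\pi^{n-p}_{h-}$ with the $E_p$ space of Table \ref{tab:tab4}), the moment spaces in \eqref{eq:ccp2} and \eqref{eq:ccp4} contain, on every occurring subsimplex $f$ and $\partial f$, the traces of $d_{qp}\nu^p_h$, $\nu^p_{h,f}$, $d_{pq}\nu^q_h$ and $\nu^q_h$ respectively. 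The relation $p+q=n+1$ together with the inclusions $\mathcal{P}^{-}_{s-1}\subset\mathcal{P}^{-}_s$ and $\mathcal{P}_{s-1}\subset\mathcal{P}_s$ are precisely what make these inclusions hold; the only further subtlety, resolved already before Lemma \ref{lem:error}, is that the bare canonical projections need continuous traces, so one replaces them by their smoothed counterparts, which retain \eqref{eq:ccp2} and \eqref{eq:ccp4} and are defined on $H^1\Lambda^{n-p}(\Omega)$ and $H\Lambda^{n-q}(\Omega)$.
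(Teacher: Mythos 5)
Your proposal is correct and follows essentially the same route as the paper: the paper's proof simply invokes the moment conditions of Definitions \ref{def:pro} and \ref{def:pro1} together with \eqref{eq:dop1}, \eqref{eq:derivative2} (resp. \eqref{eq:dop2}, \eqref{eq:derivative1}) and declares the vanishing immediate, which is exactly the orthogonality argument you spell out, with the volume and face degree/form-degree bookkeeping made explicit. Your added verification that $d_{qp}\nu^p_h$ and the boundary traces of $\nu^p_h$, $\nu^q_h$ land in the moment spaces of \eqref{eq:ccp2} and \eqref{eq:ccp4} (using $p+q=n+1$ and the inclusions $\mathcal{P}_{s-1}\subset\mathcal{P}_s$, $\mathcal{P}^{-}_{s-1}\subset\mathcal{P}^{-}_s$), and your reliance on the smoothed projections retaining these moment identities, match the assumptions the paper itself makes.
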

\begin{proof}
Using Definition \ref{def:pro} for the projection operator $\pi^l_h$ together with \eqref{eq:dop1}, \eqref{eq:derivative2}, \eqref{eq:operatordef1} is immediate. Similarly, using Definition \ref{def:pro1} for the projection operator $\pi^l_{h-}$ together with \eqref{eq:dop2}, \eqref{eq:derivative1} gives \eqref{eq:operatordef2}.
\end{proof}
\subsection{Energy equation}
For the error analysis of the port-Hamiltonian discontinuous Galerkin finite element formulation \eqref{eq:energyweak} with numerical fluxes \eqref{eq:flu} - \eqref{eq:flub}, we define the following bilinear forms.\\
For $u^p \in (H^1\Lambda^{n-p}(\Omega)+E_p(\mathcal{T}_h)),v^q\in (H\Lambda^{n-q}(\Omega)+E_q(\mathcal{T}_h)), \nu^p \in E_p(\mathcal{T}_h), \nu^q \in E_q(\mathcal{T}_h)$ and corresponding boundary port variables $\hat{u}^p \in (H^{1/2}\Lambda^{n-p}(\mathcal{F}_h) + E_p(\mathcal{F}_h)),$\\$\hat{v}^q \in (H^{-1/2}\Lambda^{n-q}(\mathcal{F}_h)+E_q(\mathcal{F}_h))$ we define the bilinear forms
\begin{subequations}
\begin{align}
\mathcal{A}(u^p,v^q;\nu^p,\nu^q) &=\underset{K \in \mathcal{T}_h}\sum\Big(\langle \nu^p \mid C_p\ast u^p \rangle_K+ \langle  \nu^q \mid C_q\ast v^q\rangle_K\Big)\nonumber\\
&= \underset{K \in \mathcal{T}_h}\sum(\int_K u^p(\mu) \cdot \nu^p(\mu) \cdot C_p(\mu)d\mu \\
&\ \ \ \ \ \ \ \ + \int_K v^q(\mu) \cdot \nu^q(\mu) \cdot C_q(\mu)d\mu) \label{eq:biforms1a}\\
\mathcal{B}(u^p,v^q,\hat{u}^p,\hat{v}^q;\nu^p,\nu^q) &= \mathcal{D}_1(v^q,\nu^p; \hat{v}^q) + \mathcal{D}_2(u^p,\nu^q; \hat{u}^p), \label{eq:biforms1b}
\end{align}
\end{subequations}
with $C_p$ and $C_q$ the coefficients in the constitutive relation between the energy and co-energy variables, \eqref{eq:constitutive}.\\
In the discrete setting $\mathcal{A}$ satisfies the following relation
\begin{lemma}
For $u^p_h,\nu^p \in E_p(\mathcal{T}_h), v^q_h,\nu^q\in E_q(\mathcal{T}_h)$ and corresponding boundary port variables $\hat{u}^p_h \in E_p(\mathcal{F}_h), \hat{v}^q_h \in E_q(\mathcal{F}_h)$ the bilinear forms satisfy
\begin{equation}
\begin{aligned}
\mathcal{A}(u^p_h,v^q_h;\nu^p,\nu^q) &=\underset{K \in \mathcal{T}_h}\sum\Big(\langle \nu^p \mid C_p\ast u^p \rangle_K+ \langle  \nu^q \mid C_q\ast v^q\rangle_K\Big)\nonumber\\
&=\underset{K \in \mathcal{T}_h}\sum\ \underset{f \in \Delta(K), \mathrm{\dim}(f)\in[p,r+p-1]}\sum\int_f u^p_h(\mu) \cdot \nu^p(\mu) \cdot C_p(\mu)\vert_f d\mu\nonumber\\
&\ \ \ \ + \underset{K \in \mathcal{T}_h}\sum\int_K v^q_h(\mu) \cdot \nu^q(\mu) \cdot C_q(\mu)\vert_Kd\mu\label{eq:biforms2}
\end{aligned}
\end{equation}
with $C_p$ and $C_q$ the coefficients in the constitutive relation between the energy and co-energy variables, \eqref{eq:constitutive}.
\end{lemma}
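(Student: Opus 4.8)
The plan is to unwind the definitions: of $\mathcal{A}$ in \eqref{eq:biforms1a}, of the discrete Hodge star operators $\star_p,\star_q$ of Section \ref{sec:conhodge}, and of the weighted inner products $g_{wp},g_{wq}$ in \eqref{eq:proinner}. Restricting to an element $K$, write $u^p_h\vert_K,\nu^p\vert_K\in E_p(K)$ and $v^q_h\vert_K,\nu^q\vert_K\in E_q(K)$. By \eqref{eq:biforms1a} the contribution of $K$ to $\mathcal{A}(u^p_h,v^q_h;\nu^p,\nu^q)$ is $\langle\nu^p\mid C_p\ast u^p_h\rangle_K+\langle\nu^q\mid C_q\ast v^q_h\rangle_K$. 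For discrete arguments the Hodge duals $C_p\ast u^p_h$ and $C_q\ast v^q_h$ must be read through the discrete Hodge star operators $\star_p:E_p(K)\to F_p(K)$, $\star_q:E_q(K)\to F_q(K)$, i.e.\ $C_p\ast u^p_h\leftrightarrow\star_p u^p_h$ and $C_q\ast v^q_h\leftrightarrow\star_q v^q_h$ (this is \eqref{eq:hsrelation}), because an element of $E_p(K)$ is a tuple \eqref{eq:vectorformulation} of polynomial forms carried by the subsimplices $f\in\Delta(K)$ and has no coefficient function on all of $K$. The defining relations \eqref{eq:relationinner} of $\star_p,\star_q$ then give $\langle\nu^p\mid\star_p u^p_h\rangle_K=g_{wp}(\nu^p,u^p_h)$ and $\langle\nu^q\mid\star_q v^q_h\rangle_K=g_{wq}(\nu^q,v^q_h)$.

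Next I would substitute the explicit formulas \eqref{eq:proinner}. For the $p$-part, using the direct-sum decomposition $E_p(K)=\oplus_{f}E_p(f)$ of \eqref{eq:funcsp2} (Case 1) or \eqref{eq:funcsp12} (Case 2) and writing $u^p_h,\nu^p$ as in \eqref{eq:vectorformulation}, the pairing $g_{wp}(\nu^p,u^p_h)$ is precisely $\sum_{f\in\Delta(K),\ \mathrm{\dim}(f)\in[p,r+p-1]}\int_f u^p_h(\mu)\cdot\nu^p(\mu)\cdot C_p(\mu)\vert_f\,d\mu$, a sum over subsimplices. For the $q$-part, $E_q(K)$ is a genuine space of polynomial differential forms on the whole element ($\mathcal{P}_{r+1}\Lambda^{n-q}(K)$ in Case 1, $\mathcal{P}^-_r\Lambda^{n-q}(K)$ in Case 2), whose elements have coefficient functions defined on $K$, so by \eqref{eq:proinner} $g_{wq}(\nu^q,v^q_h)=\int_K v^q_h(\mu)\cdot\nu^q(\mu)\cdot C_q(\mu)\vert_K\,d\mu$. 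Summing both identities over $K\in\mathcal{T}_h$ and adding reproduces \eqref{eq:biforms2}.

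The only delicate point is bookkeeping rather than analysis: one must be careful that for discrete arguments the ``$C_p\ast$'' weight is the discrete Hodge star $\star_p$ --- equivalently, that the $p$-component of the pairing lives on the direct-sum-over-subsimplices representation of $E_p(K)$ rather than on a nonexistent coefficient function on $K$ --- whereas the $q$-component genuinely sits on the element $K$, so that ``$C_q\ast$'' is the ordinary pointwise product and the pairing is a single integral over $K$. Once this asymmetry between the two components is respected, the lemma is the direct substitution of \eqref{eq:hsrelation}, \eqref{eq:relationinner} and \eqref{eq:proinner} into \eqref{eq:biforms1a}.
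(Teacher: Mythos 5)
Your proposal is correct and follows essentially the same route as the paper: the paper's proof is the single remark that the identity ``is immediate using \eqref{eq:funcsp2} for $E_p(K)$'', i.e.\ it rests on exactly the asymmetry you isolate --- the $p$-component of the pairing lives on the direct sum over subsimplices $f\in\Delta(K)$ while the $q$-component is an ordinary integral over $K$. Your additional unwinding through $\star_p,\star_q$, \eqref{eq:relationinner} and \eqref{eq:proinner} just makes explicit what the paper leaves implicit.
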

\begin{proof}
This relation is immediate using \eqref{eq:funcsp2} for $E_p(K)$.
\end{proof}
\begin{lemma}\label{lem:biliniearbal}
The DG formulation \eqref{eq:finalweak} with $\beta^p(t) \in H^1\Lambda^{n-p}(\Omega)$ and $\beta^q(t) \in H \Lambda^{n-q}(\Omega)$ the exact solution of the partial differential equations represented by the Dirac structure \eqref{eq:powbal}, satisfies $\forall \nu^p \in E_p(\mathcal{T}_h), \forall \nu^q \in E_q(\mathcal{T}_h)$ 
\begin{equation}\label{eq:energy1}
\begin{aligned}
\mathcal{A}(\dot{\beta}^p,\dot{\beta}^q;\nu^p,\nu^q) + \mathcal{B}(\beta^p,\beta^q,\beta^p,\beta^q;\nu^p,\nu^q) = 0,
\end{aligned}
\end{equation}
and the orthogonality condition 
\begin{equation}\label{eq:errorbiform}
\begin{aligned}
\mathcal{A}(\dot{\beta}^p-\dot{\beta}^p_h,\dot{\beta}^q-\dot{\beta}^q_h;\nu^p,\nu^q) + \mathcal{B}(\beta^p - \beta^p_h,\beta^q- \beta^q_h,\beta^p - \hat{\beta}^p_h,\beta^q- \hat{\beta}^q_h;\nu^p,\nu^q) = 0.
\end{aligned}
\end{equation}
\end{lemma}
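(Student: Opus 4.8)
The plan is to read \eqref{eq:energy1} as a consistency statement — the exact solution satisfies the discontinuous Galerkin weak form — and then to obtain the orthogonality relation \eqref{eq:errorbiform} by subtracting the discrete equations \eqref{eq:finalweak} and using bilinearity of $\mathcal{A}$ and $\mathcal{B}$.

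First I would prove \eqref{eq:energy1}. The exact solution satisfies the conservation laws \eqref{eq:powbal} in strong form, $-\frac{\partial\alpha^p}{\partial t}=(-1)^{r_1}d\beta^q$ and $-\frac{\partial\alpha^q}{\partial t}=d\beta^p$, while the constitutive relations \eqref{eq:constitutive} with time-independent coefficients give $\frac{\partial\alpha^p}{\partial t}=C_p\ast\dot{\beta}^p$ and $\frac{\partial\alpha^q}{\partial t}=C_q\ast\dot{\beta}^q$. Pairing the first law against a test form $\nu^p\in E_p(\mathcal{T}_h)$ and the second against $\nu^q\in E_q(\mathcal{T}_h)$, integrating over each $K\in\mathcal{T}_h$ and summing, the left-hand contributions assemble — via \eqref{eq:biforms1a} and the Hodge identity \eqref{eq:intro} — into $\mathcal{A}(\dot{\beta}^p,\dot{\beta}^q;\nu^p,\nu^q)$, and the right-hand side becomes $\sum_{K}$ of the volume pairings of $d\beta^q$ with $\nu^p$ and of $d\beta^p$ with $\nu^q$, up to the sign factors $(-1)^{r_1}$ and $(-1)^{p(n-p)}$ from \eqref{eq:powbal} and from reordering wedge products.

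Next I would apply the elementwise integration-by-parts identity (the smooth analogue of \eqref{eq:int1}, equivalently \eqref{eq:intp1}) on each $K$ to transfer the exterior derivative off $\beta^q$ onto $\nu^p$ and off $\beta^p$ onto $\nu^q$, producing boundary integrals over $\partial K$. Because the exact solution has $\beta^p\in H^1\Lambda^{n-p}(\Omega)$ and $\beta^q\in H\Lambda^{n-q}(\Omega)$ \emph{globally}, the traces $\mathrm{tr}\,\beta^p$ and $\mathrm{tr}\,\beta^q$ are single-valued across every interior face $F\in\mathcal{F}_i$; reassembling the $\partial K$-integrals face by face, the two contributions from the elements meeting at $F$ then combine (accounting for the opposite induced orientations) into exactly the interior-face term of $\mathcal{D}_1$ (resp. $\mathcal{D}_2$) evaluated at the flux argument $\hat{\beta}^q=\mathrm{tr}\,\beta^q$ (resp. $\hat{\beta}^p=\mathrm{tr}\,\beta^p$), and the exterior-face pieces reproduce the $\mathcal{F}_o$-terms with the same flux arguments. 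Hence the volume-plus-face expressions are precisely $-\mathcal{D}_1(\beta^q,\nu^p;\beta^q)$ and $-\mathcal{D}_2(\beta^p,\nu^q;\beta^p)$, and \eqref{eq:biforms1b} yields \eqref{eq:energy1}. For the orthogonality \eqref{eq:errorbiform} I would rewrite \eqref{eq:finalweak} in the bilinear-form language: adding its two equations, using \eqref{eq:hsrelation} together with the identification $\sum_K\big(\langle\nu^p\mid\star_p\dot{\beta}^p_h\rangle_K+\langle\nu^q\mid\star_q\dot{\beta}^q_h\rangle_K\big)=\mathcal{A}(\dot{\beta}^p_h,\dot{\beta}^q_h;\nu^p,\nu^q)$ (the preceding lemma, cf. \eqref{eq:biforms2}), and recognizing the remaining volume and face terms as $-\mathcal{D}_1(\beta^q_h,\nu^p;\hat{\beta}^q_h)-\mathcal{D}_2(\beta^p_h,\nu^q;\hat{\beta}^p_h)=-\mathcal{B}(\beta^p_h,\beta^q_h,\hat{\beta}^p_h,\hat{\beta}^q_h;\nu^p,\nu^q)$, one gets $\mathcal{A}(\dot{\beta}^p_h,\dot{\beta}^q_h;\nu^p,\nu^q)+\mathcal{B}(\beta^p_h,\beta^q_h,\hat{\beta}^p_h,\hat{\beta}^q_h;\nu^p,\nu^q)=0$ for all $\nu^p\in E_p(\mathcal{T}_h),\ \nu^q\in E_q(\mathcal{T}_h)$; subtracting this from \eqref{eq:energy1} and using linearity of $\mathcal{A}$ and $\mathcal{B}$ in their state and flux slots gives \eqref{eq:errorbiform}.

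The main obstacle is the sign and orientation bookkeeping in the integration-by-parts step and in the face reassembly: one must track the graded-commutativity factors $(-1)^{p(n-p)}$ from reordering wedge products, the $(-1)^{r_1}$ and $(-1)^p$ factors appearing in \eqref{eq:powbal} and in the definitions \eqref{eq:disoperator} of $\mathcal{D}_1,\mathcal{D}_2$, and the opposite outward orientations of a shared face, and verify that single-valuedness of the exact traces makes the $\theta$-weighted interior numerical fluxes \eqref{eq:flu} collapse to $\mathrm{tr}\,\beta^p,\ \mathrm{tr}\,\beta^q$ independently of $\theta$. A minor point to dispatch is regularity: $\beta^q\in H\Lambda^{n-q}(\Omega)$ only has a trace in $H^{-1/2}\Lambda^{n-q}(\mathcal{F}_h)$, but it is paired against polynomial (hence smooth) test traces, so all face pairings are well defined as $H^{-1/2}$–$H^{1/2}$ duality pairings and \eqref{eq:intp1} applies verbatim.
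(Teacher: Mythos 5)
Your proposal is correct, and it is essentially the argument the paper relies on: the paper states Lemma \ref{lem:biliniearbal} without proof, treating it as the standard consistency-plus-Galerkin-orthogonality fact, and your derivation of \eqref{eq:energy1} simply runs the paper's own construction of \eqref{eq:finalweak} (elementwise integration by parts as in \eqref{eq:int1}, single-valued exact traces collapsing the $\theta$-fluxes \eqref{eq:flu}) in reverse, while your subtraction step, using \eqref{eq:biforms2} and the discrete Hodge star \eqref{eq:relationinner} to recast \eqref{eq:finalweak} as $\mathcal{A}(\dot{\beta}^p_h,\dot{\beta}^q_h;\nu^p,\nu^q)+\mathcal{B}(\beta^p_h,\beta^q_h,\hat{\beta}^p_h,\hat{\beta}^q_h;\nu^p,\nu^q)=0$, is exactly how the orthogonality relation \eqref{eq:errorbiform} is meant to follow. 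The sign bookkeeping you flag does close: with $p+q=n+1$ one has $(-1)^{r_1+1+p(n-p)}=(-1)^p$, which reproduces the $(-1)^p$ face factors and the $+\langle\beta^q\mid d_{qp}\nu^p\rangle_K$ volume term of \eqref{eq:energyweak}, so no gap remains.
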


\subsection{A priori error estimate for port-Hamiltonian DG discretization}
In this section we will prove an a priori error estimate for the port-Hamiltonian discontinuous Galerkin discretization.
\begin{theorem}[Error Estimate]
 Given the port-Hamiltonian discontinuous Galerkin formulation \eqref{eq:finalweak} with \eqref{eq:flu}-\eqref{eq:flub}, which is based on the generalized Stokes-Dirac structure \eqref{eq:disdirac}, with numerical solutions $\beta^p_h(t) \in E_p(\mathcal{T}_h)$ and $\beta^q_h(t) \in  E_q(\mathcal{T}_h)$ for $t \in (0,T]$. Assume that the exact solutions $\beta^p(t) \in H^1\Lambda^{n-p}(\Omega) $ and $\beta^q(t) \in H\Lambda^{n-q}_h(\Omega)$ with $t \in (0,T] $ are sufficiently smooth, then we have the following a priori error estimate
 \begin{equation}\label{eq:finalest}
\begin{aligned}
 &|| \beta^p(t) - \beta^p_h(t) ||^2_{L^2\Lambda^{n-p}(\Omega)} + || \beta^q(t) - \beta^q_h(t) ||^2 _{L^2\Lambda^{n-q}(\Omega)} \\
&\leq C h^{2(r+1)}\Big(e^{2\epsilon T}(||\beta^p(0)||^2_{H^{r+1}\Lambda^{n-p}(\Omega)}+||\beta^q(0)||^2_{H^{r+1}\Lambda^{n-q}(\Omega)} )\\
&\ \ \ \ + \int^T_0 e^{2\epsilon(T -t)} (||\dot{\beta}^p(t)||^2_{H^{r+1}\Lambda^{n-p}(\Omega)} +||\dot{\beta}^q(t)||^2_{H^{r+1}\Lambda^{n-q}(\Omega)})dt\Big),
 \end{aligned}
\end{equation}
with $C$ and $\epsilon$ strictly positive constants independent of $\beta^p, \beta^q$ and the mesh size $h$.
\end{theorem}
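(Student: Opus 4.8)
The plan is to follow the standard energy-based error analysis for a structure-preserving DG method: split the error via the canonical (smoothed) projections, exploit the energy identity of Lemma~\ref{lem:biliniearbal} together with the consistency of the projections (Lemma~\ref{lem:consistency}), and close the estimate with a Grönwall argument. Concretely, write $\beta^p - \beta^p_h = (\beta^p - \pi^{n-p}_{h-}\beta^p) + (\pi^{n-p}_{h-}\beta^p - \beta^p_h) =: \eta^p + \xi^p$ and $\beta^q - \beta^q_h = (\beta^q - \pi^{n-q}_h\beta^q) + (\pi^{n-q}_h\beta^q - \beta^q_h) =: \eta^q + \xi^q$, where $\xi^p \in E_p(\mathcal{T}_h)$, $\xi^q \in E_q(\mathcal{T}_h)$ are the discrete parts. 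The first step is to insert $\nu^p = \xi^p$, $\nu^q = \xi^q$ into the Galerkin orthogonality \eqref{eq:errorbiform} and use the splitting to obtain
\begin{equation*}
\mathcal{A}(\dot{\xi}^p,\dot{\xi}^q;\xi^p,\xi^q) + \mathcal{B}(\xi^p,\xi^q,\hat{\xi}^p,\hat{\xi}^q;\xi^p,\xi^q) = -\mathcal{A}(\dot{\eta}^p,\dot{\eta}^q;\xi^p,\xi^q) - \mathcal{B}(\eta^p,\eta^q,\eta^p,\eta^q;\xi^p,\xi^q),
\end{equation*}
and here the crucial cancellation is that the second term on the right vanishes by Lemma~\ref{lem:consistency}, since $\mathcal{B}(\eta^p,\eta^q,\eta^p,\eta^q;\cdot,\cdot) = \mathcal{D}_1(\eta^q,\cdot;\eta^q) + \mathcal{D}_2(\eta^p,\cdot;\eta^p) = 0$ for the projection errors $\eta^q = \beta^q - \pi^{n-q}_h\beta^q$ and $\eta^p = \beta^p - \pi^{n-p}_{h-}\beta^p$.

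The second step is to identify the left-hand side with a time derivative of a discrete energy. By the energy-conservation structure proved in Lemma~\ref{lem:stab} (the discrete version of Theorem~7.1), $\mathcal{B}(\xi^p,\xi^q,\hat{\xi}^p,\hat{\xi}^q;\xi^p,\xi^q)$ reduces to boundary terms on $\mathcal{F}_o$; taking homogeneous (or matching) boundary data for the error — the external ports for $\xi$ are zero since the same boundary conditions are imposed on exact and discrete solutions — these boundary terms vanish, so $\mathcal{B}(\xi^p,\xi^q,\hat{\xi}^p,\hat{\xi}^q;\xi^p,\xi^q) = 0$. Meanwhile, using the symmetry and coercivity of the weighted $L^2$ pairing encoded in $\mathcal{A}$ (recall $C_p, C_q \ge C > 0$ from \eqref{eq:constitutive}), one has $\mathcal{A}(\dot{\xi}^p,\dot{\xi}^q;\xi^p,\xi^q) = \tfrac12 \frac{d}{dt}\big( \|\xi^p\|_{wL^2}^2 + \|\xi^q\|_{wL^2}^2\big)$, where the weighted norms are equivalent to the plain $L^2\Lambda^{n-p}(\Omega)$ and $L^2\Lambda^{n-q}(\Omega)$ norms. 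Thus the identity becomes
\begin{equation*}
\tfrac12 \tfrac{d}{dt}\big( \|\xi^p\|_{wL^2}^2 + \|\xi^q\|_{wL^2}^2\big) = -\mathcal{A}(\dot{\eta}^p,\dot{\eta}^q;\xi^p,\xi^q) \le \tfrac{C'}{2}\big(\|\dot\eta^p\|_{L^2}^2 + \|\dot\eta^q\|_{L^2}^2\big) + \tfrac{\epsilon}{2}\big(\|\xi^p\|_{wL^2}^2 + \|\xi^q\|_{wL^2}^2\big)
\end{equation*}
after Cauchy–Schwarz and Young's inequality (the $\epsilon$ absorbs the weight constants). Note that commutativity $d\pi^{l-1}_{h} = \pi^l_h d$ (and its $\pi_{h,-}$ analogue) from Lemma~\ref{lem:error} is what guarantees that $\dot\eta^p,\dot\eta^q$ are themselves the projection errors of $\dot\beta^p,\dot\beta^q$, so their norms are controlled by \eqref{eq:bound11}.

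The third step is Grönwall's inequality: with $y(t) := \|\xi^p(t)\|_{wL^2}^2 + \|\xi^q(t)\|_{wL^2}^2$ satisfying $\dot y \le \epsilon y + C'(\|\dot\eta^p\|_{L^2}^2 + \|\dot\eta^q\|_{L^2}^2)$, integrating yields $y(t) \le e^{\epsilon t} y(0) + C' \int_0^t e^{\epsilon(t-s)}(\|\dot\eta^p(s)\|^2 + \|\dot\eta^q(s)\|^2)\,ds$. Now apply the interpolation bound \eqref{eq:bound11} with $s = r+1$ to each projection error: $\|\dot\eta^p(s)\|_{L^2} \le c h^{r+1}\|\dot\beta^p(s)\|_{H^{r+1}}$, likewise for $\dot\eta^q$, and for the initial data $y(0) \le c h^{2(r+1)}(\|\beta^p(0)\|_{H^{r+1}}^2 + \|\beta^q(0)\|_{H^{r+1}}^2)$ provided $\beta^p_h(0),\beta^q_h(0)$ are taken as the projections of the initial data. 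Finally, triangle inequality $\|\beta^p - \beta^p_h\| \le \|\eta^p\| + \|\xi^p\|$ (and the same for $q$), together with one more application of \eqref{eq:bound11} to bound $\|\eta^p(t)\|, \|\eta^q(t)\|$, produces \eqref{eq:finalest} with $2\epsilon$ in the exponent after relabeling constants.

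The main obstacle I anticipate is the boundary-term bookkeeping in the second step: verifying rigorously that $\mathcal{B}(\xi^p,\xi^q,\hat{\xi}^p,\hat{\xi}^q;\xi^p,\xi^q)$ produces only $\mathcal{F}_o$ contributions (with internal faces cancelling exactly via the $\theta$-weighted flux splitting \eqref{eq:flu}) and that these external contributions vanish for the error variables — this hinges on the interconnection Dirac structure being power-preserving (Lemmas~\ref{lem:interconnectionlemma}, \ref{lem:interconnectionlemma1}) and on the error's external ports being zero, which in turn requires that the numerical flux at $\mathcal{F}_o$ is consistent with the boundary data. A secondary technical point is ensuring the smoothed projections (rather than the canonical ones) still satisfy both the commutativity $d\pi_h = \pi_h d$ and the exact orthogonality used in Lemma~\ref{lem:consistency} under the low regularity of $\beta^p,\beta^q$; this is handled by citing the construction in Section~5.4 of \cite{arnold200finite}, but it must be invoked carefully so that $\mathcal{D}_1(\eta^q,\cdot;\eta^q)=0$ genuinely holds pointwise on every subsimplex face appearing in \eqref{eq:dop1}.
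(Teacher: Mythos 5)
Your proposal is correct and follows essentially the same route as the paper's proof: the same splitting of the error into projection error and discrete part, the same choice of test functions $\pi^{n-p}_{h-}er^p,\ \pi^{n-q}_her^q$ in the orthogonality relation \eqref{eq:errorbiform}, cancellation of the projection-error part of $\mathcal{B}$ via Lemma \ref{lem:consistency}, reduction of the discrete part of $\mathcal{B}$ to vanishing external boundary terms via Lemma \ref{lem:stab} and exactly imposed boundary data, the energy identity for $\mathcal{A}$, Cauchy/Young with $\epsilon$, the interpolation bounds of Lemma \ref{lem:error}, Gr\"onwall, and projected initial data with a final triangle inequality. The only minor slip is attributing the identity $\dot\eta = \dot\beta - \pi\dot\beta$ to the commutativity $d\pi_h=\pi_h d$; what is actually used is that the (time-independent, linear) projection commutes with $\partial_t$, which is how the paper treats it as well.
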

\begin{proof}
For Case 1 in Section 4.2, the error contributions satisfy 
\begin{equation}\label{eq:error}
\begin{aligned}
er^p &= \beta^p - \beta^p_h = \beta^p - \pi^{n-p}_{h-}\beta^p + \pi^{n-p}_{h-}er^p,\\
er^q &= \beta^q - \beta^q_h = \beta^q - \pi^{n-q}_h\beta^q + \pi^{n-q}_her^q,
\end{aligned}
\end{equation}
 where we used here the properties $\pi^{n-p}_{h-}\beta^p_h = \beta^p_h$ and $\pi^{n-q}_h\beta^q_h = \beta^q_h$ of the projection operators $\pi^{n-p}_{h-}$ and $\pi^{n-q}_h$, which follow directly from Definitions \ref{def:pro} and \ref{def:pro1}. Note, for Case 2 in Section 4.2 the projection operator $\pi_{h,-}$ and $\pi^h$ need to be interchanged in the proof of Theorem 8.8.\\
Introducing the test forms as $ \nu^p  = \pi^{n-p}_{h-}er^p \in E_p(\mathcal{T}_h)$ and $ \nu^q = \pi^{n-q}_her^q \in E_q(\mathcal{T}_h)$, into \eqref{eq:errorbiform} gives, 
\begin{equation}\label{eq:estimateq}
\begin{aligned}
&\mathcal{A}(\dot{\beta}^p-\dot{\beta}^p_h,\dot{\beta}^q-\dot{\beta}^q_h; \pi^{n-p}_{h-}er^p,\pi^{n-q}_her^q)\\
&+ \mathcal{B}(\beta^p - \beta^p_h,\beta^q- \beta^q_h, \beta^p - \hat{\beta}^p_h,\beta^q- \hat{\beta}^q_h;\pi^{n-p}_{h-}er^p,\pi^{n-q}_her^q) = 0.
\end{aligned}
\end{equation}
Using the linearity of $\mathcal{A}$ and \eqref{eq:error}, we obtain 
\begin{equation}\label{eq:alinearity}
\begin{aligned}
&\mathcal{A}(\dot{\beta}^p-\dot{\beta}^p_h,\dot{\beta}^q-\dot{\beta}^q_h; \pi^{n-p}_{h-}er^p,\pi^{n-q}_her^q)\\
&= \mathcal{A}(\dot{\beta}^p-\pi^{n-p}_{h-}\dot{\beta}^p,\dot{\beta}^q-\pi^{n-q}_h\dot{\beta}^q;\pi^{n-p}_{h-}er^p,\pi^{n-q}_her^q)\\
&\ \ \ + \mathcal{A}(\pi^{n-p}_{h-}\dot{er}^p,\pi^{n-q}_h\dot{er}^q;\pi^{n-p}_{h-}er^p,\pi^{n-q}_her^q).
\end{aligned}
\end{equation}
Also, using the linearity of $\mathcal{D}_1, \mathcal{D}_2$ and $\mathcal{B}$ with $\pi^{n-q}_h\hat{\beta}^q, \pi^{n-q}_h\hat{er}^q$ and $\pi^{n-p}_{h-}\hat{\beta}^p,$\\
$\pi^{n-p}_{h-}\hat{er}^p$ the projections on the faces $f \in \partial\Delta(K)$, we obtain
\begin{equation}\label{eq:break3}
\begin{aligned}
&\mathcal{B}(\beta^p - \beta^p_h,\beta^q- \beta^q_h,\beta^p-\hat{\beta}^p_h,\beta^q-\hat{\beta}^q_h;\pi^{n-p}_{h-}er^p,\pi^{n-q}_her^q)\\
&=\mathcal{B}(\beta^p-\pi^{n-p}_{h-}\beta^p,\beta^q-\pi^{n-q}_h\beta^q,\beta^p-\pi^{n-p}_{h-}\hat{\beta}^p_h,\beta^q-\pi^{n-q}_h\hat{\beta}^q_h;\pi^{n-p}_{h-}er^p,\pi^{n-q}_her^q)\\
&\ \ \ \ + \mathcal{B}(\pi^{n-p}_{h-}er^p,\pi^{n-q}_her^q,\pi^{n-p}_{h-}\hat{er}^p,\pi^{n-q}_h\hat{er}^q;\pi^{n-p}_{h-}er^p,\pi^{n-q}_her^q).
\end{aligned}
\end{equation}
Substituting \eqref{eq:alinearity} and \eqref{eq:break3} into \eqref{eq:estimateq}, we obtain 
\begin{equation}\label{eq:norm}
\begin{aligned}
&\mathcal{A}(\dot{\beta}^p-\pi^{n-p}_{h-}\dot{\beta}^p,\dot{\beta}^q-\pi^{n-q}_h\dot{\beta}^q;\pi^{n-p}_{h-}er^p,\pi^{n-q}_her^q)\\
&\ \ \  + \mathcal{A}(\pi^{n-p}_{h-}\dot{er}^p,\pi^{n-q}_h\dot{er}^q;\pi^{n-p}_{h-}er^p,\pi^{n-q}_her^q)\\
&\ \ \ + \mathcal{B}(\beta^p-\pi^{n-p}_{h-}\beta^p,\beta^q-\pi^{n-q}_h\beta^q, \beta^p-\pi^{n-p}_{h-}\hat{\beta}^p_h,\beta^q-\pi^{n-q}_h\hat{\beta}^q_h;\pi^{n-p}_{h-}er^p,\pi^{n-q}_her^q) \\
&\ \ \  +\mathcal{B}(\pi^{n-p}_{h-}er^p,\pi^{n-q}_her^q,\pi^{n-p}_{h-}\hat{er}^p,\pi^{n-q}_h\hat{er}^q;\pi^{n-p}_{h-}er^p,\pi^{n-q}_her^q)\\
&\ \ \  = 0.
\end{aligned}
\end{equation}
Using Lemma 8.6 and \eqref{eq:L2innerpro} we obtain,  
\begin{equation}\label{eq:norm1}
\begin{aligned}
 &\mathcal{A}(\pi^{n-p}_{h-}\dot{er}^p,\pi^{n-q}_h\dot{er}^q;\pi^{n-p}_{h-}er^p,\pi^{n-q}_her^q) \\
 &=\sum_{K \in \mathcal{T}_h} \Big(\underset{f \in \Delta(K), \mathrm{\dim}(f)\in[p,r+p-1]}\sum\int_f \pi^{n-p}_{h-}\dot{er}^p(\mu) \cdot \pi^{n-p}_{h-}er^p(\mu) \cdot  C_p(\mu)\vert_f d\mu \\
 &\ \ \ \ \ \ \ \ \ + \int_K \pi^{n-q}_h\dot{er}^q(\mu) \cdot \pi^{n-q}_her^q(\mu) \cdot  C_q(\mu)d\mu\Big)\\
 &= \frac{1}{2}\frac{d}{dt}\Big(||\pi^{n-p}_{h-}er^p||^2_{wL^2\Lambda^{n-p}(\Omega)} + ||\pi^{n-q}_her^q||^2_{wL^2\Lambda^{n-q}(\Omega)}\Big).
 \end{aligned}
 \end{equation}
Using Lemma \ref{lem:consistency}, we obtain
\begin{equation}\label{eq:norm5}
\begin{aligned}
 &\mathcal{B}(\beta^p-\pi^{n-p}_{h-}\beta^p,\beta^q-\pi^{n-q}_h\beta^q, \beta^p-\pi^{n-p}_{h-}\hat{\beta}^p_h,\beta^q-\pi^{n-q}_h\hat{\beta}^q_h;\pi^{n-p}_{h-}er^p,\pi^{n-q}_her^q)\\
 & =\mathcal{D}_1(\beta^q- \pi^{n-q}_h\beta^q,\pi^{n-p}_{h-}er^p;\beta^q- \pi^{n-q}_h\hat{\beta}^q)\\
 &\ \ \ \ + \mathcal{D}_2(\beta^p- \pi^{n-p}_{h-}\beta^p,\pi^{n-q}_h er^ q;\beta^p- \pi^{n-p}_{h-}\hat{\beta}^p)\\
 &=0.
 \end{aligned}
 \end{equation}
Lemma \ref{lem:stab} gives 
 \begin{equation}\label{eq:norm3}
\begin{aligned}
\mathcal{B}(\pi^{n-p}_{h-}er^p,\pi^{n-q}_her^q,\pi^{n-p}_{h-}\hat{er}^p,\pi^{n-q}_h\hat{er}^q;\pi^{n-p}_{h-}er^p,\pi^{n-q}_her^q)\\
= -\underset{F \in \mathcal{F}_o}\sum\Big(\langle y^q_h\mid u^p_h\rangle_F + \langle u^q_h\mid y^p_h\rangle_F),
 \end{aligned}
 \end{equation}
 where $y^p_h, u^p_h \in E_p(\mathcal{F}_h)$ and $y^q_h, u^q_h \in E_q(\mathcal{F}_h)$. Using the definition of the external boundary port variables $y^p_h$ and $y^q_h$ as stated in the generalized Stokes-Dirac structure \eqref{eq:disdirac} with $y^{p\ast}_h=Q(y^q_h)$, we can rewrite \eqref{eq:norm3} as
  \begin{equation}\label{eq:norm03}
\begin{aligned}
&\mathcal{B}(\pi^{n-p}_{h-}er^p,\pi^{n-q}_her^q,\pi^{n-p}_{h-}\hat{er}^p,\pi^{n-q}_h\hat{er}^q;\pi^{n-p}_{h-}er^p,\pi^{n-q}_her^q) \\
&= \underset{F \in \mathcal{F}_o}\sum\Big(\langle \mathrm{tr}_q\pi^{n-q}_her^q\mid u^p_h\rangle_F + (-1)^{p} \langle u^q_h\mid \mathrm{tr}_p\pi^{n-p}_{h-}er^p \rangle_F).
 \end{aligned}
 \end{equation}
Using $\pi^{n-p}_{h-}er^p = \pi^{n-p}_{h-}\beta^p - \beta^p_h$ and $\pi^{n-q}_her^q = \pi^{n-q}_h\beta^q - \beta^q_h$, we obtain
 \begin{equation}\label{eq:boundaryerror}
 \begin{aligned}
 \mathrm{tr}_p \pi^{n-p}_{h-}er^p &= \mathrm{tr}_p \pi^{n-p}_{h-}\beta^p - \mathrm{tr}_p\beta^p_h,\\
 \mathrm{tr}_q \pi^{n-q}_her^q  &= \mathrm{tr}_q \pi^{n-q}_h\beta^q - \mathrm{tr}_q\beta^q_h.
 \end{aligned}
 \end{equation}
Assume that the boundary conditions are applied exactly, then \eqref{eq:boundaryerror}\\
gives \ $ \mathrm{tr}_p \pi^{n-p}_{h-}er^p  =0$ and $ \mathrm{tr}_q \pi^{n-q}_her^q =0$. Thus,
\begin{equation}\label{eq:Berror2}
\begin{aligned}
\mathcal{B}(\pi^{n-p}_{h-}er^p,\pi^{n-q}_her^q,\pi^{n-p}_{h-}\hat{er}^p,\pi^{n-q}_h\hat{er}^q;\pi^{n-p}_{h-}er^p,\pi^{n-q}_her^q)=0.
\end{aligned}
\end{equation}
Furthermore, we have
\begin{equation}\label{eq:norm6}
\begin{aligned}
&\mathcal{A}(\dot{\beta}^p-\pi^{n-p}_{h-}\dot{\beta}^p,\dot{\beta}^q-\pi^{n-q}_h\dot{\beta}^q;\pi^{n-p}_{h-}er^p,\pi^{n-q}_her^q) \\
&=( \dot{\beta}^p - \pi^{n-p}_{h-}\dot{\beta^p}  \mid  \pi^{n-p}_{h-}er^p)_{wL^2\Lambda^{n-p}(\Omega)}+ (  \dot{\beta}^q - \pi^{n-q}_h\dot{\beta^q}\mid \pi^{n-q}_her^q)_{wL^2\Lambda^{n-q}(\Omega)}.
\end{aligned}
\end{equation}
Using Cauchy's inequality with $\epsilon>0$, we can simplify \eqref{eq:norm6} as
\begin{equation}\label{eq:norm7}
\begin{aligned}
&\mathcal{A}(\dot{\beta}^p-\pi^{n-p}_{h-}\dot{\beta}^p,\dot{\beta}^q-\pi^{n-q}_h\dot{\beta}^q;\pi^{n-p}_{h-}er^p,\pi^{n-q}_her^q)\\
&\leq \frac{1}{4\epsilon}||\dot{\beta}^p-\pi^{n-p}_{h-}\dot{\beta}^p||^2_{wL^2\Lambda^{n-p}(\Omega)} + \epsilon ||\pi^{n-p}_{h-}er^p||^2_{wL^2\Lambda^{n-p}(\Omega)} \\
& \ \ \ \  + \frac{1}{4\epsilon}||\dot{\beta}^q-\pi^{n-q}_h\dot{\beta}^q||^2_{wL^2\Lambda^{n-q}(\Omega)} + \epsilon ||\pi^{n-q}_her^q||^2_{wL^2\Lambda^{n-q}(\Omega) }.\\
\end{aligned}
\end{equation}
Using Lemma \ref{lem:error}, the following bounds hold 
\begin{equation}
\begin{aligned}
||\dot{\beta}^p-\pi^{n-p}_{h-}\dot{\beta}^p||^2_{wL^2\Lambda^{n-p}(\Omega)} &\leq Ch^{2(r+1)}||\dot{\beta}^p(t)||^2_{H^{r+1}\Lambda^{n-p}(\Omega)},\\
||\dot{\beta}^q-\pi^{n-q}_h\dot{\beta}^q||^2_{wL^2\Lambda^{n-q}(\Omega)} &\leq Ch^{2(r+1)}||\dot{\beta}^q(t)||^2_{H^{r+1}\Lambda^{n-q}(\Omega)},\\
\end{aligned}
\end{equation}
where $r$ is the lowest order of the polynomial differential form spaces $E_p,E_q$ and\\
$h$ is the element size. Finally, \eqref{eq:norm7} simplifies to 
\begin{equation}\label{eq:norm8}
\begin{aligned}
&\mathcal{A}(\dot{\beta}^p-\pi^{n-p}_{h-}\dot{\beta}^p,\dot{\beta}^q-\pi^{n-q}_h\dot{\beta}^q;\pi^{n-p}_{h-}er^p,\pi^{n-q}_her^q)\\
&\leq Ch^{2(r+1)}\Big(||\dot{\beta}^p(t)||^2_{H^{r+1}\Lambda^{n-p}(\Omega)} +||\dot{\beta}^q(t)||^2_{H^{r+1}\Lambda^{n-q}(\Omega)}\\
&\ \ \ \ \ \ + \epsilon (||\pi^{n-p}_{h-}er^p||^2_{wL^2\Lambda^{n-p}(\Omega)} +||\pi^{n-q}_her^q||^2_{wL^2\Lambda^{n-q}(\Omega) })\Big).
\end{aligned}
\end{equation}
Using \eqref{eq:norm}, with \eqref{eq:norm1}, \eqref{eq:norm5}, \eqref{eq:Berror2} and \eqref{eq:norm8}, gives 
  \begin{equation}\label{eq:norm10}
  \begin{aligned}
 &\frac{1}{2}\frac{d}{dt}\Big(||\pi^{n-p}_{h-}er^p||^2_{wL^2\Lambda^{n-p}(\Omega)} + ||\pi^{n-q}_her^q||^2_{wL^2\Lambda^{n-q}(\Omega)}\Big)\\
&\leq Ch^{2(r+1)}\Big((||\dot{\beta}^p(t)||^2_{H^{r+1}\Lambda^{n-p}(\Omega)}  + ||\dot{\beta}^q(t)||^2_{H^{r+1}\Lambda^{n-q}(\Omega)})\\
  &\ \ \ \ +  \epsilon (||\pi^{n-p}_{h-}er^p||^2_{wL^2\Lambda^{n-p}(\Omega)} + ||\pi^{n-q}_her^q||^2_{wL^2\Lambda^{n-q}(\Omega)})\Big).
  \end{aligned}
\end{equation}
Using Gronwall's inequality in differential form we obtain
\begin{equation}
\begin{aligned}
&||\pi^{n-p}_{h-}er^p||^2_{wL^2\Lambda^{n-p}(\Omega)} + ||\pi^{n-q}_her^q||^2_{wL^2\Lambda^{n-q}(\Omega)} \leq e^{2\epsilon T}  (||\pi^{n-p}_{h-}er^p(0)||^2_{wL^2\Lambda^{n-p}(\Omega)}\\
&\ \ \ \ +||\pi^{n-q}_her^q(0)||^2_{wL^2\Lambda^{n-q}(\Omega)}) + Ch^{2(r+1)}\int_0^T e^{2\epsilon(T -t)} (||\dot{\beta}^p(t)||^2_{H^{r+1}\Lambda^{n-p}(\Omega)}\\
&\ \ \ \ +||\dot{\beta}^q(t)||^2_{H^{r+1}\Lambda^{n-q}(\Omega)})dt.
\end{aligned}
\end{equation}
Let $\beta^p_h(x,0) = \pi^{n-p}_{h-}\beta^p(x,0)$ and $\beta^q_h(x,0)=\pi^{n-q}_h \beta^q(x,0)$ for all $x \in \Omega$, be the projections of the initial conditions to the discontinuous Galerkin finite element spaces. Using the fact that $\pi^{n-p}_{h,-}$ and $\pi^{n-q}_h$ are projections and $||\pi_h u||_{L^2} = ||u||_{L^2}$ gives \eqref{eq:finalest}.
\end{proof}
\section{Results}\label{sec:result}
To support our theory we apply the formulation for the DG discretization of linear port-Hamiltonian systems, presented in Section 6, to the scalar wave equation.\\ 
For an $n$-dimensional oriented manifold $\Omega$ the scalar wave equation is given in vector notation by 
\begin{equation}\label{eq:pdewave}
\begin{aligned}
\frac{d^2u}{dt^2}= \frac{1}{\mu} \nabla \cdot (Y \nabla u) \quad \mathrm{in} \ \Omega,
\end{aligned}
\end{equation}
along with appropriate boundary conditions at the boundary $\partial \Omega$. Here $u \in \mathbb{R}^n$ is the displacement, $\mu \in \mathbb{R}$ the mass density and $Y \in \mathbb{R}^{n \times n}$ the modulus of rigidity, $\nabla$ the nabla operator and $t$ time. In a port-Hamiltonian formulation \eqref{eq:pdewave} is written as, see \cite{talasila2002wave, van2014port, van2002hamiltonian, golo2004hamiltonian},
 \begin{equation}\label{eq:conswave}
\begin{aligned}
\left[ \begin{array}{c} \dot{\rho} \\  \dot{\epsilon} \\ \end{array} \right] & =  \begin{bmatrix} 0  & (-1)^{n+2} d \\  -d & 0 \\ \end{bmatrix} \left[ \begin{array}{c}  V \\ \sigma  \\ \end{array} \right] \ ,\\
\left[ \begin{array}{c} V^b \\  \sigma^b \\ \end{array} \right] & =  \begin{bmatrix} -1 & 0 \\ 0& (-1)^{n-1} \\ \end{bmatrix} \left[ \begin{array}{c}  \mathrm{tr}(V) \\  \mathrm{tr}(\sigma)\\ \end{array} \right]. \ 
\end{aligned}
\end{equation}
Here, the kinetic momentum $\rho= \mu \frac{du}{dt} \in L^2\Lambda^n(\Omega)$ and elastic strain $\epsilon= du \in L^2\Lambda^1(\Omega)$ are the energy variables. The velocity $V= \frac{du}{dt} \in H^1\Lambda^0(\Omega)$ and elastic stress $\sigma= (Y \ast du)\in H\Lambda^{n-1}(\Omega)$ are the co-energy variables. Note, $d$ is the exterior derivative for differential forms and $\ast$ the Hodge star operator.\\
 The Hamiltonian energy for the wave equation is given by 
  \begin{equation}\label{eq:hamwave}
\begin{aligned}
 H = \frac{1}{2} \int_\Omega(\epsilon \wedge \sigma + \rho \wedge V ).
  \end{aligned}
  \end{equation}
 In $2D$ the energy variables $\rho$ is a $2$-form and $\epsilon$ a $1$-form, whereas the co-energy variable $V$ is a $0$-form and $\sigma$ a $1$-form. So, we take $p=2, q=1, n=2$ and consider the domain $\Omega  = [-1,1] \times [-1,1]$.\\
We choose the discrete energy variables as $V_h \in E_p(\mathcal{T}_h), \sigma_h \in E_q(\mathcal{T}_h), \rho_h \in F_p(\mathcal{T}_h)$ and $\epsilon \in F_q(\mathcal{T}_h)$. Following the procedure discussed in Section 6, see also \eqref{eq:finalweak}, we obtain the following port-Hamiltonian discontinuous Galerkin (pHDG) formulation of the wave equation: Find $V_h \in E_p(\mathcal{T}_h), \sigma_h \in E_q(\mathcal{T}_h)$ such that forall $ \nu^p \in E_p(\mathcal{T}_h), \nu^q \in E_q(\mathcal{T}_h)$,
\begin{equation}\label{eq:actualweak}
\begin{aligned}
\underset{K \in \mathcal{T}_h}{\sum}\langle \nu^p \mid \star_{p} \dot{V}_h \rangle_K  &=\underset{K \in \mathcal{T}_h}\sum\langle \sigma_h \mid d_{qp}  \nu^p_h \rangle_K +(-1)^p\underset{F \in \mathcal{F}_o} \sum \langle \hat{\sigma}_h\mid \mathrm{tr}_p\nu^p_h\rangle_{F}\\
&\ \ \ \ + (-1)^p\underset{F \in \mathcal{F}_i}\sum\langle\hat{\sigma}_h\mid (\mathrm{tr}_p \nu^p_h\vert_L+\mathrm{tr}_p \nu^p_h\vert_R)\rangle_F,\\
\underset{K \in  \mathcal{T}_h}\sum\langle \nu^q \mid \star_{q} \dot{\sigma}_h\rangle_{K} &= (-1)^{p+1}\underset{K \in  \mathcal{T}_h}\sum\langle d_{pq}  \nu^q_h\mid  V_h\rangle_{K} + (-1)^p \underset{F \in \mathcal{F}_o} \sum \langle \mathrm{tr}_q \nu^q_h\mid \hat{V}_h \rangle_F \\
&\ \ \ \ + (-1)^p \underset{F \in \mathcal{F}_i}\sum\langle (\mathrm{tr}_q\nu^q_h\vert_L + \mathrm{tr}_q\nu^q_h\vert_R)\mid \hat{V}_h\rangle_F,
\end{aligned}
\end{equation}
where for $F \in \mathcal{F}_i$
\begin{equation}\label{eq:flux}
\begin{aligned}
\hat{V}_h&= (1 - \theta) \mathrm{tr}_pV_h\vert_L +  \theta \mathrm{tr}_pV_h\vert_R,\\
\hat{\sigma}_h&=  \theta \mathrm{tr}_q\sigma_h\vert_L + ( 1- \theta) \mathrm{tr}_q\sigma_h\vert_R,\\
\end{aligned}
\end{equation}
with $\theta \in [0,1]$ and for $F \in \mathcal{F}_o$,
\begin{equation}\label{eq:flux1}
\begin{aligned}
\hat{V}_h &= \mathrm{tr}_pV_h, &\qquad \hat{\sigma}_h &= \mathrm{tr}_q\sigma_h.
\end{aligned}
\end{equation}
 As model problem we use the exact solution for the displacement $u$, velocity $V$ and elastic stress $\sigma$ given by  
\begin{equation}\label{eq:testingvalues}
\begin{aligned}
u &= \frac{1}{2\pi} \sin(2\pi t)(\sin(2\pi x) + \sin(2\pi y)),\\
V&=  \cos(2\pi t)(\sin(2\pi x) + \sin(2\pi y)),\\
\sigma &= \sin(2\pi t)(\cos(2\pi x)dx + \cos(2\pi y)dy),
\end{aligned}
\end{equation}
with Dirichlet's boundary conditions for $V$ and $\sigma$ obtained by restricting $(x,y)$ in \eqref{eq:testingvalues} to $\partial \Omega$. The mass density and modulus of rigidity are taken as 1.\\
Using $p=2, q=1$ in the expression for the spaces $E_p$ and $E_q$, shown in Table 1, the finite element space $V_h$ identifies with the space of Lagrange finite elements of order $r$ and $\sigma_h$ with the space of Brezzi-Douglas-Marini finite elements of order $r+1$. The time integration is done with a 4th order explicit Runge-Kutta method.\\
The $L^2$ and $L^{\infty}$ errors were obtained for 5 values of theta, i.e., $\theta \in \{ 0,\frac{1}{3},\frac{1}{2},\frac{2}{3},1\}$. The results are shown in Tables \ref{tab:table3}, \ref{tab:table4} and \ref{tab:table5}
\begin{table}[H]
{\footnotesize
\caption{Table 3: Order of accuracy of the pH-DGFEM for the scalar wave equation, $V_h \in \mathcal{P}^-_0\Lambda^0(K)$ and $\sigma_h \in \mathcal{P}_1\Lambda^1(K)$}
\label{tab:table3}
\centering
\begin{tabular}{|c|c|c|c|c|c|c|c|c|c|}
\hline
& & \multicolumn{4} {|c|} {$L^2$error}&\multicolumn{4} {|c|} {$L^{\infty}$error}\\
\hline
$\theta$& h & Velocity & Order & Stress & Order & Velocity& Order  & Stress & Order \\
\hline
\multirow{3}{1 em}{0}& 0.0625 & 0.184760 & - - & 0.001473 & - -  & 2.1484 e-06 & - - & 0.001930 & - - \\  [1ex]
&0.03125& 0.092513 &0.9979&  7.4556e-04 & 0.9826 &   1.0338e-06 &   1.055 &  9.7758e-04 &0.9717\\ [1ex]
&0.015625& 0.046273  & 0.9994 &3.7454e-04&0.9932 &  5.0462e-07  &1.034 & 4.9035e-04 &0.9954\\ [1ex]
\hline
\multirow{3}{1 em}{1/3}&  0.0625  & 0.184760 & - - & 9.7839e-04 & - -  & 9.8968e-07 & - - & 0.001256 & - - \\  [1ex]
&0.03125 &0.092513 &0.9979&  4.9335e-04 & 0.9877 &   4.7913e-07 &   1.029&6.3797e-04&0.9718\\ [1ex]
&0.015625 & 0.046273  & 0.9994 & 2.4735e-04 &0.9960 & 2.3464e-07  &1.046 & 3.2022e-04 &0.9929\\ [1ex]
\hline
\multirow{3}{1 em}{1/2}& 0.0625  & 0.184760 & - - & 0.036180 & - -  & 9.8968e-07 & - - & 0.001252 & - - \\  [1ex]
&0.03125 &0.092513 &0.9979&  4.9335e-04 & 0.9877 &   4.7913e-07 &   1.029 & 6.3741e-04 &0.9718\\ [1ex]
&0.015625 & 0.046273  & 0.9994 & 2.4735e-04 &0.9960 & 2.3464e-07  &1.046 & 3.2016e-04 &0.9929\\ [1ex]
\hline
\multirow{3}{1 em}{2/3}& 0.0625  & 0.184760 & - - & 9.7839e-04 & - -  & 9.8968e-07 & - - & 0.001249 & - - \\  [1ex]
&0.03125 &0.092513 &0.9979&  4.9335e-04 &  0.9877 &  4.7913e-04 &   1.029  & 6.3711e-04&0.9718\\ [1ex]
&0.015625& 0.046273  & 0.9994 & 2.4735e-04 & 0.9960 & 2.3464e-07  &1.046 & 3.2011e-04 &0.9929\\ [1ex]
\hline
\multirow{3}{1 em}{1}& 0.0625 & 0.184760 & - - &  0.001473 & - -  & 2.1484e-06 & - - & 0.001930 & - - \\  [1ex]
&0.03125 &0.092513 &0.9979&  7.4556e-04 & 0.9826 &   1.0338e-06 &   1.055& 9.8423e-04&0.9717\\ [1ex]
&0.015625 & 0.046273  & 0.9994 & 3.7454e-04 &0.9932 & 5.0462e-07  &1.034 & 4.9553e-04 &0.9900\\ [1ex]
\hline
\end{tabular}
}
\end{table}
\begin{table}[H]
{\footnotesize
\caption{Table 4: Order of accuracy of the pH-DGFEM for the scalar wave equation, $V_h \in \mathcal{P}^-_1\Lambda^0(K)$ and $\sigma_h \in \mathcal{P}_2\Lambda^1(K)$}
\label{tab:table4}
\centering
\begin{tabular}{|c|c|c|c|c|c|c|c|c|c|}
\hline
& & \multicolumn{4} {|c|} {$L^2$error}&\multicolumn{4} {|c|} {$L^{\infty}$error}\\
\hline
$\theta$& h & Velocity & Order & Stress & Order & Velocity& Order  & Stress & Order \\
\hline
\multirow{3}{1 em}{0}& 0.0625 & 0.008886 & - - & 1.021e-04 & - -  & 1.211e-06 & - - & 5.9490e-04 & - - \\  [1ex]
&0.03125& 0.002224 &1.9980&  2.5540e-05 & 1.9995 &   3.0701e-07 &   1.9802 &   1.5026e-04 &1.9851\\ [1ex]
&0.015625& 5.5639e-04  & 1.9995 &6.3882e-06&1.9992 &  7.7017e-08  &1.9950& 3.7662e-05 &1.9962\\ [1ex]
\hline
\multirow{3}{1 em}{1/3}&  0.0625  &  0.008886 & - - & 7.8863e-05 & - -  & 3.7039e-07 & - - & 4.8877e-04 & - - \\  [1ex]
&0.03125 & 0.002224 &1.9980&  1.9734e-05 & 1.9986 &   9.3191e-08 &   1.9908& 1.2328e-04&1.9871\\ [1ex]
&0.015625 & 5.5639e-04  & 1.9995 & 4.9349e-06 &1.9996 & 2.3333e-08  &1.9977 & 3.0890e-05 &1.9967\\ [1ex]
\hline
\multirow{3}{1 em}{1/2}& 0.0625  & 0.008886 & - - & 7.5472e-05& - -  & 1.8361e-07 & - - & 4.3570e-04 & - - \\  [1ex]
&0.03125 &0.002224 &1.9980&  1.8886e-05 & 1.9985 &   4.6103e-08 &   1.9937 & 1.0979e-04 &1.9884\\ [1ex]
&0.015625 & 5.5639e-04  & 1.9995 &  4.7222e-06 &1.9998 & 1.1555e-08  &1.9962 & 2.7504e-05 &1.9971\\ [1ex]
\hline
\multirow{3}{1 em}{2/3}& 0.0625  & 0.008886 & - - & 7.8915e-05 & - -  & 3.5899e-07 & - - & 4.7640e-04 & - - \\  [1ex]
&0.03125 &0.002224 &1.9980&  1.9741e-05 &  1.9990 &  9.2462e-08 &   1.9570  & 1.2250e-04&1.9593\\ [1ex]
&0.015625& 5.5639e-04  & 1.9995 & 4.9357e-06 & 1.9998 &  2.3288e-08 &1.9892 & 3.0841e-05 &1.9898\\ [1ex]
\hline
\multirow{3}{1 em}{1}& 0.0625 & 0.008886 & - - & 1.0224e-04 & - -  &1.1824e-06 & - - & 5.8582e-04 & - - \\  [1ex]
&0.03125 &0.002224 &1.9980&  2.5555e-05 & 2.0003 &   3.0517e-07 &   1.9540& 1.4969e-04&1.9684\\ [1ex]
&0.015625 & 5.5639e-04  & 1.9995 & 6.3901e-06 &1.9997 & 7.6901e-08 &1.9885 & 3.7626e-05 &1.9921\\ [1ex]
\hline
\end{tabular}
}
\end{table}
\begin{table}[H]
{\footnotesize
\caption{Table 5: Order of accuracy of the pH-DGFEM for the scalar wave equation, $V_h \in \mathcal{P}^-_2\Lambda^0(K)$ and $\sigma_h \in \mathcal{P}_3\Lambda^1(K)$}
\label{tab:table5}
\centering
\begin{tabular}{|c|c|c|c|c|c|c|c|c|c|}
\hline
& & \multicolumn{4} {|c|} {$L^2$error}&\multicolumn{4} {|c|} {$L^{\infty}$error}\\
\hline
$\theta$& h & Velocity & Order & Stress & Order & Velocity& Order  & Stress & Order \\
\hline
\multirow{3}{1 em}{0}&  0.0625   &   2.8790e-04 & - - &5.7062e-06 & - -  & 1.7594e-07 & - - &  6.7039e-04 & - - \\  [1ex]
& 0.03125 &  3.6031e-05 & 2.9982 &7.2018e-07 & 2.9861& 2.2527e-08 &  2.9653 &     8.4647e-05 & 2.9854 \\ [1ex]
&0.015625    &  4.5053e-06  & 2.9995  &9.0376e-08  & 2.9943 & 2.8328e-09 & 2.9913 & 1.0607e-05& 2.9964 \\ [1ex]
\hline
\multirow{3}{1 em}{1/3}&  0.0625   &   2.8790e-04 & - - &3.4595e-06 & - -  & 7.1348e-08 & - - &  4.1759e-04 & - - \\  [1ex]
& 0.03125 &  3.6031e-05 & 2.9982 &4.3152e-07 & 3.0030& 9.1496e-09 &  2.9630 &     5.2720e-05 & 2.9856 \\ [1ex]
&0.015625    &  4.5053e-06  & 2.9995  &5.3888e-08  & 3.0014 &  1.1510e-09 & 2.9907 & 6.6060e-06& 2.9964 \\ [1ex]
\hline
\multirow{3}{1 em}{1/2}&  0.0625   &   2.8790e-04 & - - &3.0651e-06 & - -  & 4.6243e-08 & - - & 3.7029e-04 & - - \\  [1ex]
& 0.03125 &  3.6031e-05 & 2.9982 &3.8035e-07 & 3.0105& 5.9055e-09 &  2.9691 &     4.6908e-05 & 2.9807\\ [1ex]
&0.015625    &  4.5053e-06  & 2.9995  &4.7392e-08  & 3.0046 & 7.4213e-10 & 2.9923 & 5.8831e-06& 2.9952\\ [1ex]
\hline
\multirow{3}{1 em}{2/3}&  0.0625   &   2.8790e-04 & - - & 3.4600e-06 & - -  & 7.2013e-08 & - - &  4.2727e-04 & - - \\  [1ex]
& 0.03125 &  3.6031e-05 & 2.9982 &4.3155e-07 & 3.0031& 9.1709e-09 &  2.9731 &      5.3029e-05 & 3.0103 \\ [1ex]
&0.015625    &  4.5053e-06  & 2.9995  &5.3890e-08  & 3.0014 & 1.1516e-09 & 2.9933 & 6.6157e-06& 3.0028 \\ [1ex]
\hline
\multirow{3}{1 em}{1}&  0.0625   &   2.8790e-04 & - - &5.7071e-06 & - -  & 1.7496e-07 & - - &  6.8605e-04 & - - \\  [1ex]
& 0.03125 &  3.6031e-05 & 2.9982 &7.2023e-07 & 2.9862& 2.2492e-08 &  2.9594 &     8.5146e-05 & 3.0103 \\ [1ex]
&0.015625    &  4.5053e-06  & 2.9995  &9.0380e-08  & 2.9944 & 2.8317e-09 & 2.9896 & 1.0622e-05& 3.0027 \\ [1ex]
\hline
\end{tabular}
}
\end{table}
Referring to Tables 3, 4 and 5, we observe that the $L^2$ and $L^{\infty}$ errors are restricted by the polynomial order $r$ of the space $V_h$, resulting in convergence order $r+1$. Note that although $\sigma_h$ belongs to an $r+1$-th order finite element space the convergence order is $r+1$ and not $r+2$. The reason being that in the port-Hamiltonian formulation the conservation laws are coupled, hence the convergence rate of both the co-energy variables is restricted by the convergence rate of the co-energy variable $V$, which is discretized using an $r$-th order accurate finite element space. We also observe that the numerical results are consistent with the a priori error bounds, given in \eqref{eq:finalest}.
\section{Acknowledgement}
We thank the Shell-NWO/FOM PhD-75 program, project 15CSER49,  for giving us the opportunity and funding to work on this project


\bibliographystyle{amsplain}
\bibliography{PHDGFEM_arxiv}
\end{document}